\documentclass[final]{siamart190516}
\usepackage{amsfonts}
\usepackage{amsfonts,amsmath,amssymb}
\usepackage{mathrsfs,mathtools,stmaryrd,wasysym}
\usepackage{enumerate}
\usepackage{enumitem}
\usepackage{esint}
\usepackage{graphicx,psfrag}
\usepackage{hyperref}
\usepackage{amsmath,stackengine}
\usepackage{yfonts}
\usepackage{multirow} 
\DeclareMathAlphabet{\mathpzc}{OT1}{pzc}{m}{it}
\newcommand{\T}{\mathscr{T}}

\makeatletter
\renewcommand{\fnum@algorithm}{\ALG@name~\thealgorithm:}
\makeatother

\newsiamremark{remark}{Remark}

\newcommand{\TheTitle}{A nonlocal coupled system: analysis and discretization}
\newcommand{\ShortTitle}{A nonlocal coupled system}
\newcommand{\TheAuthors}{F. Bersetche, E. Ot\'arola, D. Quero}

\headers{\ShortTitle}{\TheAuthors}

\title{{\TheTitle}\thanks{FB is partially supported by ANPCyT through grant PICT 2018 - 3017 and Programa Regional MATH-AmSud AMSUD210013. EO is partially supported by Programa Regional MATH-AmSud  AMSUD210013.}}

\author{
Francisco Bersetche\thanks{Departamento de Matem\'atica, FCEyN, Universidad de Buenos Aires \& IMAS CONICET,
Pabell\'on I, Ciudad Universitaria 1428, Buenos Aires, Argentina. \email{fbersetche@dm.uba.ar}}
\and
Enrique Ot\'arola\thanks{Departamento de Matem\'atica, Universidad T\'ecnica Federico Santa Mar\'ia, Valpara\'iso, Chile. \email{enrique.otarola@usm.cl}}
\and
Daniel Quero\thanks{Departamento de Ciencias Exactas, Universidad de Los Lagos, Osorno, Chile \email{daniel.querotangol@ulagos.cl}}}

\ifpdf
\hypersetup{
  pdftitle={\TheTitle},
  pdfauthor={\TheAuthors}
}
\fi

\date{Draft version of \today.}

\begin{document}

\maketitle
\begin{abstract}
We analyze a nonlocal coupled system arising as the Euler--Lagrange equations of an energy functional involving regional fractional Laplacians of orders $s_1$ and $s_2$ ($ 0 < s_1,s_2 < 1$), each acting on a separate disjoint domain and coupled through a nonlocal interaction term depending on a kernel $J$. Under suitable assumptions on the domains and the kernel, we prove existence and uniqueness of the energy minimizer and derive regularity estimates in fractional Sobolev spaces. We introduce a finite element discretization and establish a priori error estimates. We develop an alternating Schwarz-type method for both the continuous and discrete problems and prove its geometric convergence. Numerical experiments validate the theoretical predictions and illustrate the performance of the method.
\end{abstract}

\begin{keywords}
nonlocal operators, fractional diffusion, regional fractional Laplacian, coupled systems, regularity estimates, finite elements, error estimates, alternating schemes, convergence.
\end{keywords}

% REQUIRED
\begin{AMS}
35R11,         % Fractional partial differential equations
45F15,   	   % Systems of singular linear integral equations
65N15,         % Error bounds for boundary value problems involving PDEs
65N30.         % Finite element, Rayleigh-Ritz and Galerkin methods for boundary value problems involving PDEs
\end{AMS}

%%%%%%%%%%%%%%%%%%%%%%%%%%%%%%%%%%%%%%%%%%%%%%%%%%%%
%%%%%%%%%%%%%%%%%%%%%%%%%%%%%%%%%%%%%%%%%%%%%%%%%%%%
%%%%%%%%%%%%%%%%%%%%%%%%%%%%%%%%%%%%%%%%%%%%%%%%%%%%
%%%%%%%%%%%%%%%%%%%%%%%%%%%%%%%%%%%%%%%%%%%%%%%%%%%%

\section{Introduction}\label{sec:intro}

Let $\Omega_{1}$ and $\Omega_{2}$ be two open, connected, bounded domains with Lipschitz boundaries in $\mathbb{R}^{n}$ ($n \ge 2$) such that $\overline{\Omega}_{1} \cap \overline{\Omega}_{2} = \emptyset$, and let $d := \mathrm{dist}(\Omega_1,\Omega_2)$ denote the distance between $\Omega_{1}$ and $\Omega_{2}$. Note that $d > 0$. Let $s_1, s_2 \in (0,1)$. Let $J : \mathbb{R}^n \to \mathbb{R}$ be a nonnegative, symmetric, measurable function  such that $J \in L^{1}(\mathbb{R}^{n})$. We further assume that $\mathrm{supp}(J) \supset B_{r}(0)$ for some $r > d$,
as illustrated in Figure \ref{fig:NL_interaction}. In this work, we study the following \emph{nonlocal coupled system} and develop and analyze a numerical scheme for its approximation: Given $f_{1} : \Omega_1 \to \mathbb{R}$ and $f_{2} : \Omega_2 \to \mathbb{R}$, find $u := u_{1}\chi_{\Omega_{1}} + u_{2}\chi_{\Omega_{2}}$ such that
\begin{equation}
	\label{def:state_eq}
	\begin{aligned}
		2\mathcal{C}_{1} \int_{\Omega_2^c}\frac{u_{1}(x) - u_{1}(y)}{|x - y|^{n + 2s_{1}}}\,\mathrm{d}y
		+
		\int_{\Omega_2}
		J(x - y)\big(u_{1}(x) - u_{2}(y)\big)\,\mathrm{d}y
		&=
		f_{1}(x),
		\quad x \in \Omega_1,
		\\
		2\mathcal{C}_{2} \int_{\Omega_1^c}\frac{u_{2}(x) - u_{2}(y)}{|x - y|^{n + 2s_{2}}}\,\mathrm{d}y
		+
		\int_{\Omega_1}
		J(x - y)\big(u_{2}(x) - u_{1}(y)\big)\,\mathrm{d}y
		&=
		f_{2}(x),
		\quad x \in \Omega_2.
	\end{aligned}
\end{equation}
Here, for $i \in \{1,2\}$, we set $\mathcal{C}_{i} = c(n,s_i)/2$, where $c(n,s_i)$ denotes a positive normalization constant, and $\chi_{\Omega_{i}}$ denotes the characteristic function of the domain $\Omega_{i}$. We impose the volumetric boundary condition $u_i = 0$ in $\Omega_i^c$. In each equation of system \eqref{def:state_eq}, the first integral term  corresponds to a regional fractional Laplacian that captures the long-range diffusion within each domain, while the second integral accounts for the  interaction between the two components $u_1$ and $u_2$ across the gap between $\Omega_1$ and $\Omega_2$.

Classical partial differential equations (PDEs) have long served as the primary mathematical framework for modeling a wide range of scientific and technological processes. Such models are typically derived under smoothness assumptions and rely on local arguments, in the sense that the value of the solution at a given point is determined solely by its immediate neighborhood. Despite their remarkable success, classical PDEs fall short when describing phenomena involving singularities, discontinuities, or inherently long-range interactions. Such
phenomena commonly arise in fracture mechanics and continuum mechanics
\cite{MR2563154,MR1727557}, 
and in anomalous diffusion, including superdiffusion and subdiffusion arising in subsurface transport and turbulence \cite{2003WRR.39.1022S,Suzuki2023}. These limitations have motivated the development of nonlocal models \cite{doi:10.1137,MR3966542,MR3938295}, where the value of the governing operator at a given point depends on the behavior of the solution over an extended region of the domain.

Such nonlocal models find applications across a wide range of disciplines, including finance \cite{4fc30f5f-894d-3c6c-9b15-01f8f8d74820,MR2042661}, image science \cite{doi:10.1137/070698592,MR2608636}, and peridynamics \cite{doi:10.1137/110833294,MR4000179}. They are typically formulated as integro-differential equations whose defining feature is a kernel function that encodes long-range interactions. An important example from the family of nonlocal operators is the \emph{integral fractional Laplacian} $(-\Delta)^{s}$ ($0 < s < 1$), which corresponds to the infinitesimal generator of a stable Lévy process. In probabilistic terms, $(-\Delta)^{s}$ arises from a random motion allowing long jumps with a polynomially decaying tail; see \cite{Valdinoci} and \cite[Chapter 1]{MR3469920}. When the random walk is instead confined to a domain $U$, the resulting operator is the \emph{regional fractional Laplacian}, which integrates over $U$ rather than over the entire space. Under the volumetric boundary condition
$u_i = 0$ in $\Omega_i^c$, this is precisely the operator appearing in the first integral term of each equation in system \eqref{def:state_eq}.

Despite the success of nonlocal models in describing complex phenomena, many real-world systems involve regions where different physical regimes coexist and interact across distinct spatial domains. This has motivated the development of Local-to-Nonlocal (LtN) coupling methods, which combine the computational efficiency of classical PDE-based models with the accuracy of nonlocal formulations in regions where nonlocal effects are significant
\cite{marta}. Beyond computational convenience, LtN coupling methods also offer a principled way to circumvent the nontrivial task of prescribing nonlocal volumetric boundary conditions, since surface data are typically available and classical boundary conditions can be imposed in the local subdomain
\cite{marta,MR4778195}. A variety of approaches have been proposed in this direction, including optimization-based coupling \cite{MR3158780,MR3501316}, Arlequin-type methods \cite{MR2440614,MR2881373}, morphing techniques \cite{MR2910463,AZDOUD20131332}, blending mechanisms \cite{MR3600327,MR3641030}, splice methods \cite{MR4778195}, and varying-horizon approaches \cite{MR3600327}; we refer the reader to \cite{marta} for a comprehensive review.

In contrast to these LtN coupling strategies, the system \eqref{def:state_eq} studied in this work couples two purely nonlocal operators, each acting on one of the two disjoint domains, through a cross-domain interaction kernel $J$. To the best of our knowledge, this nonlocal-nonlocal coupling configuration on disjoint domains has not been previously explored in the literature. From a mathematical standpoint, the system \eqref{def:state_eq} poses several analytical challenges: the two regional fractional Laplacians act on separate domains with different fractional orders $s_1$ and $s_2$, and the coupling term introduces a nonlocal interaction across a positive-distance gap, requiring the development of a suitable functional framework. One possible physical interpretation is that of two populations of particles, each supported on one of the disjoint domains and undergoing anomalous diffusion via long-jump random walks, with the kernel $J$ regulating their mutual exchange across the gap.

The analysis of system \eqref{def:state_eq} gives rise to the following contributions:
\begin{itemize}[leftmargin=*]
\item[$\bullet$] \emph{Well-posedness via an energy functional}: We introduce a suitable energy functional $\mathcal{E}$ whose
Euler--Lagrange equations yield a weak formulation of
\eqref{def:state_eq}. We establish the coercivity, continuity, and strict convexity of $\mathcal{E}$ and apply the direct method of the calculus of variations to prove the existence and uniqueness of a minimizer $u \in H$, which is the unique solution of this weak formulation.

\item[$\bullet$] \emph{Weak formulation and regularity estimates}: We analyze
the bilinear form associated with the weak formulation of  \eqref{def:state_eq} and establish its continuity and  coercivity, yielding a stability estimate for the unique  solution $u \in H$. We further prove that $u$ enjoys
additional Sobolev regularity, which is the key ingredient in the derivation of the a priori error estimates.

\item[$\bullet$] \emph{Finite element discretization}: We propose a finite element discretization of \eqref{def:state_eq} based on piecewise linear elements and derive a priori error bounds in the energy norm $|\cdot|_H$ in terms of the mesh
parameters $h$ and $\mathfrak{h}$ and the regularity of the  solution.

\item[$\bullet$] \emph{Alternating scheme}: We propose and
analyze alternating schemes for both the continuous and
discrete problems, inspired by the classical Schwarz method
\cite{schwarz1870ueber,Lions}. We first establish geometric
convergence of the continuous scheme; the same arguments
then yield geometric convergence of the discrete scheme to
the unique solution of the discrete coupled system.

\item[$\bullet$] \emph{Numerical experiments}: We present two numerical experiments that validate the theoretical predictions and illustrate the performance of the devised finite element scheme.
\end{itemize}

The remainder of this work is organized as follows. Section~\ref{sec:notation_and_prel} introduces the notation,
assumptions, and functional framework. Section~\ref{sec:nonloc_coupled_model} establishes the well-posedness of \eqref{def:state_eq} via the minimization
of an energy functional $\mathcal{E}$, derives its Euler--Lagrange equations as the weak formulation of \eqref{def:state_eq}, and provides stability and regularity estimates. Section~\ref{sec:fem} presents the finite element discretization and derives a priori error estimates in the energy norm $|\cdot|_H$. Section~\ref{sec:altern_schemes}
proposes and analyzes continuous and discrete alternating schemes inspired by the classical Schwarz method and proves their geometric convergence. Finally, Section~\ref{sec:numerical_exp} presents two numerical experiments validating the theoretical predictions.

%%%%%%%%%%%%%%%%%%%%%%%%%%%%%%%%%%%%%%%%%%%%%%%%
%%%%%%%%%%%%%%%%%%%%%%%%%%%%%%%%%%%%%%%%%%%%%%%%
%%%%%%%%%%%%%%%%%%%%%%%%%%%%%%%%%%%%%%%%%%%%%%%%
%%%%%%%%%%%%%%%%%%%%%%%%%%%%%%%%%%%%%%%%%%%%%%%%
%%%%%%%%%%%%%%%%%%%%%%%%%%%%%%%%%%%%%%%%%%%%%%%%
%%%%%%%%%%%%%%%%%%%%%%%%%%%%%%%%%%%%%%%%%%%%%%%%

\section{Notation and preliminary remarks}
\label{sec:notation_and_prel}

We establish the notation and recall some facts that will be useful later.
 
\subsection{Notation}
Throughout this work, let $n \geq 2$. If $\Omega \subset \mathbb{R}^n$ is an open, bounded domain, we denote by $\partial \Omega$ the boundary of $\Omega$ and by $\Omega^{c}$ the complement of $\Omega$. Given $r>0$ and $x \in \mathbb{R}^n$, $B_r(x)$ denotes the open Euclidean ball of radius $r$ centered at $x$. We write $B_r = B_r(0)$ when $x=0$.

If $\mathscr{X}$ and $\mathscr{Y}$ are Banach function spaces,  $\mathscr{X}\hookrightarrow \mathscr{Y}$ denotes that $\mathscr{X}$ is continuously embedded in $\mathscr{Y}$. We denote by $\mathscr{X}'$ the dual of $\mathscr{X}$ and by $\| \cdot \|_{\mathscr{X}}$ the norm of $\mathscr{X}$. The duality pairing between $\mathscr{X}'$ and $\mathscr{X}$ is denoted by $\langle \cdot,\cdot \rangle_{\mathscr{X}',\mathscr{X}}$, and we write simply $\langle \cdot,\cdot \rangle$ when the spaces $\mathscr{X}'$ and $\mathscr{X}$ are clear from the context.

The relation $\mathfrak{a} \lesssim \mathfrak{b}$ indicates that $\mathfrak{a} \leq C \mathfrak{b}$ for some positive constant $C$
 independent of $\mathfrak{a}$, $\mathfrak{b}$, and of discretization parameters; $C$ may depend on $n$, $\Omega$, and any fractional exponents $s_i$ appearing in the model. The value of $C$ may change from line to line.

%%%%%%%%%%%%%%%%%%%%%%%%%%%%%%%%%%%%%%%%%%%%%%%%%%%%
%%%%%%%%%%%%%%%%%%%%%%%%%%%%%%%%%%%%%%%%%%%%%%%%%%%%
%%%%%%%%%%%%%%%%%%%%%%%%%%%%%%%%%%%%%%%%%%%%%%%%%%%%
%%%%%%%%%%%%%%%%%%%%%%%%%%%%%%%%%%%%%%%%%%%%%%%%%%%%

\subsection{Assumptions}\label{sec:assumptions}

Let $\Omega_{1}$ and $\Omega_{2}$ be two open, connected, bounded domains in $\mathbb{R}^{n}$. We assume that $\partial \Omega_1$ and $\partial \Omega_2$ are Lipschitz and $\bar{\Omega}_{1} \cap \bar{\Omega}_{2} = \emptyset$. The distance between the sets $\Omega_1$ and $\Omega_2$ is \cite[Section 0.6]{MR1681462}
\[
 d := \text{dist}(\Omega_{1},\Omega_{2}) := \inf\{|x - y|: x \in \Omega_{1},~y \in \Omega_{2}\}.
\]
Note that $d$ is a strictly positive constant because $\bar{\Omega}_{1} \cap \bar{\Omega}_{2} = \emptyset$.

For a domain $\Omega \subset \mathbb{R}^{n}$, we denote the distance from a point $x \in \mathbb{R}^{n}$ to $\Omega$ by \cite[Section 0.6]{MR1681462}
\begin{equation}
\delta_{\Omega}(x) = \inf\{|x - t|: t \in \Omega\}.
\end{equation}

We operate under the following assumptions on the kernel $J$:
\begin{itemize}
\item[(J1)] $J: \mathbb{R}^{n} \rightarrow \mathbb{R}$ is  nonnegative and symmetric, i.e., $J(z) \geq 0$ for a.e.~$z \in \mathbb{R}^n$ and $J(z) = J(-z)$ for a.e.~$z \in \mathbb{R}^n$;
\item[(J2)] $J \in L^{1}(\mathbb{R}^{n})$;
\item[(J3)] There exists $r > d$ such that $B_{r} \subset \text{supp}(J)$.
\end{itemize}

\begin{figure}[ht!]
\label{fig:NL_interaction}
\centering
\includegraphics[scale=0.35]{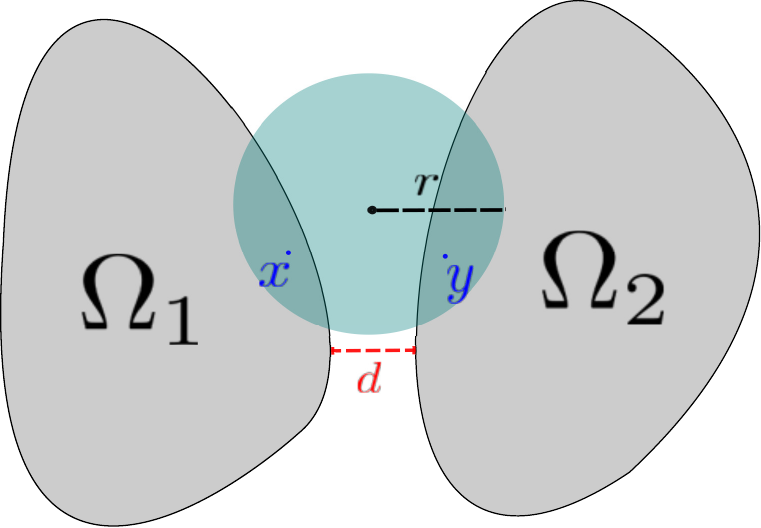}
\caption{Nonlocal interactions between $\Omega_{1}$ and $\Omega_{2}$ due to the kernel $J$.}
\end{figure}

\subsection{Convolution}
Let $f \in L^1(\mathbb{R}^n)$ and let $g \in L^p(\mathbb{R}^n)$ with $1 \leq p \leq \infty$. We define the convolution of $f$ with $g$ by
\begin{equation}
 (f \star g)(x) = \int_{\mathbb{R}^n}  f(x-y) g(y) \mathrm{d}y.
\end{equation}

The following is a classical result \cite[Theorem 4.15]{MR2759829}.

\begin{lemma}[continuity]\label{lemma:cont_conv}
If $f \in L^1(\mathbb{R}^n)$ and $g \in L^p(\mathbb{R}^n)$ with $1 \leq p \leq \infty$, then
\begin{equation}
\| f \star g \|_{L^p(\mathbb{R}^n)} \leq \| f \|_{L^1(\mathbb{R}^n)}\| g \|_{L^p(\mathbb{R}^n)}.
\end{equation}
\end{lemma}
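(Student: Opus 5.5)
The plan is to prove this version of Young's inequality by separating the endpoint cases from the range $1 < p < \infty$. The main tools are Tonelli/Fubini and Hölder's inequality. The first step is to check that $(f \star g)(x)$ is well defined for a.e.\ $x$; this falls out of the estimates below applied to $|f|$ and $|g|$, since they show $\int |f(x-y)||g(y)|\,\mathrm{d}y < \infty$ for a.e.\ $x$. Measurability of $(x,y)\mapsto f(x-y)g(y)$ on $\mathbb{R}^n\times\mathbb{R}^n$ is standard: the map $(x,y)\mapsto x-y$ is linear and surjective, so preimages of null sets are null.

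\emph{Case $p=\infty$.} For every $x$,
\[
|(f\star g)(x)| \le \|g\|_{L^\infty(\mathbb{R}^n)} \int_{\mathbb{R}^n} |f(x-y)|\,\mathrm{d}y = \|f\|_{L^1(\mathbb{R}^n)}\|g\|_{L^\infty(\mathbb{R}^n)},
\]
by translation and reflection invariance of Lebesgue measure.

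\emph{Case $p=1$.} By Tonelli,
\[
\int_{\mathbb{R}^n}\int_{\mathbb{R}^n} |f(x-y)||g(y)|\,\mathrm{d}y\,\mathrm{d}x = \int_{\mathbb{R}^n} |g(y)| \Big(\int_{\mathbb{R}^n} |f(x-y)|\,\mathrm{d}x\Big)\mathrm{d}y = \|f\|_{L^1(\mathbb{R}^n)}\|g\|_{L^1(\mathbb{R}^n)}.
\]
This shows the inner integral is finite for a.e.\ $x$ and gives the bound.

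\emph{Case $1<p<\infty$.} Let $p'$ be the conjugate exponent and split $|f(x-y)||g(y)| = |f(x-y)|^{1/p'}\cdot |f(x-y)|^{1/p}|g(y)|$. Hölder's inequality in $y$ gives
\[
\int_{\mathbb{R}^n} |f(x-y)||g(y)|\,\mathrm{d}y \le \|f\|_{L^1(\mathbb{R}^n)}^{1/p'} \Big(\int_{\mathbb{R}^n} |f(x-y)||g(y)|^p\,\mathrm{d}y\Big)^{1/p}.
\]
The last integral equals $(|f|\star|g|^p)(x)$. Since $|g|^p\in L^1(\mathbb{R}^n)$, the case $p=1$ shows it is finite a.e.\ and has $L^1$ norm at most $\|f\|_{L^1}\|g\|_{L^p}^p$. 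Raising to the power $p$ and integrating in $x$ then yields
\[
\|f\star g\|_{L^p(\mathbb{R}^n)}^p \le \|f\|_{L^1(\mathbb{R}^n)}^{p/p'}\,\|f\|_{L^1(\mathbb{R}^n)}\|g\|_{L^p(\mathbb{R}^n)}^p = \|f\|_{L^1(\mathbb{R}^n)}^{p}\|g\|_{L^p(\mathbb{R}^n)}^p,
\]
using $p/p'+1=p$.

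The only delicate point is the measure-theoretic one: joint measurability of $f(x-y)g(y)$ and the a.e.\ finiteness that makes $f\star g$ defined. Both are handled by Tonelli applied to nonnegative functions before Fubini is invoked. Everything else is routine.
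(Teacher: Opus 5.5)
Your proposal is correct and is essentially the standard proof of Young's inequality for $L^1\star L^p$ found in the reference the paper cites for this lemma (\cite[Theorem 4.15]{MR2759829}): the trivial bound for $p=\infty$, Tonelli for $p=1$, and, for $1<p<\infty$, H\"older with the splitting $|f|^{1/p'}\cdot|f|^{1/p}|g|$, which reduces to the case $p=1$ applied to $|f|\star|g|^p$. The paper does not reprove the lemma; it simply invokes this classical result.
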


%%%%%%%%%%%%%%%%%%%%%%%%%%%%%%%%%%%%%%%%%%%%%%%%%%%%
%%%%%%%%%%%%%%%%%%%%%%%%%%%%%%%%%%%%%%%%%%%%%%%%%%%%
%%%%%%%%%%%%%%%%%%%%%%%%%%%%%%%%%%%%%%%%%%%%%%%%%%%%
%%%%%%%%%%%%%%%%%%%%%%%%%%%%%%%%%%%%%%%%%%%%%%%%%%%%

\subsection{Function spaces}\label{sec:function_spaces}

Fractional Sobolev spaces provide a natural framework for analyzing problems involving fractional Laplace operators. In $\mathbb{R}^n$, a family of such spaces can be defined based on the Fourier transform $\mathcal{F}$: For $s \geq 0$, we define \cite[Definition 15.7]{MR2328004}, \cite[Chapter 1, Section 7]{MR0350178}
\begin{equation*}
H^{s}(\mathbb{R}^{n}) := \{ v \in L^2(\mathbb{R}^{n}) :  (1 + |\xi|^2)^{\frac{s}{2}} \mathcal{F}(v) \in  L^2(\mathbb{R}^{n})\},
\end{equation*}
endowed with the norm $\|v\|_{H^{s}(\mathbb{R}^{n})}:= \|(1 + |\xi|^2)^{\frac{s}{2}} \mathcal{F}(v)\|_{L^2(\mathbb{R}^{n})}$. 

Let $\Omega \subset \mathbb{R}^n$ be a bounded Lipschitz domain. We define $\tilde H^s(\Omega)$ as the closure of $C_0^{\infty}(\Omega)$ in $H^{s}(\mathbb{R}^{n})$ \cite[page 77]{MR1742312}. The space $\tilde H^s(\Omega)$ admits the following equivalent characterization as the space of zero-extension functions \cite[Theorem 3.29]{MR1742312}:
\begin{equation*}
 \tilde{H}^{s}(\Omega)=\{v|_{\Omega}^{} : v\in H^{s}(\mathbb{R}^{n}), \text{ supp }v\subset\bar{\Omega}\}.
\end{equation*}
For $s \in (0,1)$, we equip $\tilde{H}^{s}(\Omega)$ with the following inner product and seminorm \cite[page 75]{MR1742312}:
\begin{equation*}
\label{eq:inner_product}
( v, w )_{\tilde{H}^{s}(\Omega)} :=\int_{\mathbb{R}^{n}}\int_{\mathbb{R}^{n}}\frac{(v(x) - v(y))(w(x) - w(y))}{|x - y|^{n+2s}}\mathrm{d}x\mathrm{d}y,
\quad
| v |_{\tilde{H}^s(\Omega)}:= ( v, v )^{\frac{1}{2}}_{\tilde{H}^{s}(\Omega)}.
\end{equation*}
Owing to the fractional Poincar\'e inequality \cite[Proposition 2.4]{MR3620141}
\begin{equation}
 \| v \|_{L^2(\Omega)} \leq C | v |_{\tilde{H}^s(\Omega)}
 \quad
 \forall v \in \tilde{H}^s(\Omega),
 \qquad
 C = C(\Omega,n,s),
 \label{eq:Poincare}
\end{equation}
we have that $| \cdot |_{\tilde{H}^s(\Omega)}$ is a norm in $\tilde{H}^s(\Omega)$ and that $\tilde{H}^s(\Omega)$ is a Hilbert space.

We denote by $H^{-s}(\Omega)$ the dual space of $\tilde{H}^{s}(\Omega)$.

For $\mu \in (0,1)$, we introduce the Slobodecki\u i--Gagliardo seminorm \cite[(3.18)]{MR1742312}
\begin{equation}
 | v |_{H^{\mu}(\Omega)} = \left( \int_{\Omega}\int_{\Omega} \frac{|v(x) - v(y)|^2}{|x - y|^{n+2\mu}}\mathrm{d}x\mathrm{d}y\right)^{\frac{1}{2}}.
\end{equation}
We note that the definition of $| \cdot |_{H^{\mu}(\Omega)} $ remains valid for unbounded $\Omega \subseteq \mathbb{R}^n$. Let $s>0$ and write $s = r + \mu$ with $r \in \mathbb{N}_{0}$ and $\mu \in (0,1)$. We define \cite[page 74]{MR1742312}
\[
H^{s}(\Omega) := \{v \in H^{r}(\Omega): |\partial^{\alpha}v|_{H^{\mu}(\Omega)} < \infty \text{ for } |\alpha| = r\},
\]
equipped with the norm
\[
\|v\|_{H^{s}(\Omega)} := \left( \|v\|^{2}_{H^{r}(\Omega)} + \sum_{|\alpha| = r}|\partial^{\alpha}v|_{H^{\mu}(\Omega)}^{2} \right)^{\frac{1}{2}}.
\]

We conclude this section with the following Sobolev embedding results.

\begin{lemma}[embedding results]\label{lemma:embedding_result}
Let $s\in(0,1)$. If $\mathfrak{r}\in [1,2n/(n - 2s)]$, then $H^{s}(\Omega)\hookrightarrow L^{\mathfrak{r}}(\Omega)$. If $\mathfrak{r}\in [1,2n/(n - 2s))$, then $H^{s}(\Omega)\hookrightarrow L^{\mathfrak{r}}(\Omega)$ is compact.
\end{lemma}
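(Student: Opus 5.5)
The plan is to reduce the statement to the classical embeddings on the whole space by means of an extension operator; this is the standard argument in \cite{Valdinoci}. Since $\Omega$ is a bounded Lipschitz domain, there is a bounded linear extension operator $E : H^{s}(\Omega) \to H^{s}(\mathbb{R}^n)$ with $Ev|_{\Omega} = v$ and $\|Ev\|_{H^{s}(\mathbb{R}^n)} \lesssim \|v\|_{H^{s}(\Omega)}$. For $s\in(0,1)$ the Gagliardo--Slobodecki\u{\i} norm on $\mathbb{R}^n$ is equivalent to the Fourier norm introduced in Section~\ref{sec:function_spaces}. Multiplying $Ev$ by a smooth cutoff $\eta$ that equals $1$ on $\Omega$ keeps this bound, so we may also assume that $Ev$ is supported in a fixed ball $B_R \supset \Omega$.

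\emph{Continuous embedding.} For $n>2s$, which always holds since $n\ge 2 > 2s$, the fractional Sobolev inequality reads
\[
\|w\|_{L^{2^*_s}(\mathbb{R}^n)} \lesssim |w|_{H^{s}(\mathbb{R}^n)}, \qquad 2^*_s = \frac{2n}{n-2s}.
\]
One way to obtain it is via the Fourier side and Hardy--Littlewood--Sobolev: write $w = I_s g$ with $\hat g = |\xi|^s \hat w$, so that $\|g\|_{L^2} \simeq |w|_{H^s}$, and use that the Riesz potential $I_s$ is bounded from $L^2$ to $L^{2^*_s}$. Applying this to $w = Ev$ gives the endpoint case $\mathfrak{r} = 2^*_s$. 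Since $\Omega$ is bounded, H\"older's inequality then yields $\|v\|_{L^{\mathfrak{r}}(\Omega)} \le |\Omega|^{1/\mathfrak{r} - 1/2^*_s}\|v\|_{L^{2^*_s}(\Omega)}$ for every $\mathfrak{r} \in [1,2^*_s]$.

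\emph{Compactness.} Let $(v_k)$ be bounded in $H^{s}(\Omega)$. First I will show relative compactness in $L^2(\Omega)$ via the Fr\'echet--Kolmogorov criterion applied to the functions $w_k = \eta E v_k$, which are bounded in $H^s(\mathbb{R}^n)$ and supported in $B_R$. The key estimate is the translation bound
\[
\|w(\cdot+h)-w\|_{L^2(\mathbb{R}^n)}^2 = \int |e^{ih\cdot\xi}-1|^2 |\hat w|^2\, \mathrm{d}\xi \lesssim |h|^{2s}\,\|w\|_{H^s(\mathbb{R}^n)}^2,
\]
which follows from $|e^{ih\cdot\xi}-1| \le \min(2,|h||\xi|) \lesssim (|h||\xi|)^s$. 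This gives equicontinuity of the translations, and the common bounded support gives uniform tightness. Hence a subsequence converges in $L^2(\Omega)$.

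For $\mathfrak{r} \in [1,2^*_s)$ I will upgrade the $L^2$ convergence by interpolation. When $\mathfrak{r}\le 2$, H\"older on the bounded set $\Omega$ suffices. When $2<\mathfrak{r}<2^*_s$, choose $\theta\in(0,1)$ with $1/\mathfrak{r} = \theta/2 + (1-\theta)/2^*_s$; then
\[
\|v_k - v_j\|_{L^{\mathfrak{r}}} \le \|v_k-v_j\|_{L^2}^{\theta}\,\|v_k-v_j\|_{L^{2^*_s}}^{1-\theta}.
\]
The second factor is bounded by the continuous embedding, so the subsequence is Cauchy in $L^{\mathfrak{r}}(\Omega)$. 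The argument needs $\theta>0$, which is exactly why the endpoint $\mathfrak{r}=2^*_s$ is excluded.

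The main obstacle is the existence of the bounded extension operator on Lipschitz domains. This is a known result for $s\in(0,1)$, see the paper's reference MR1742312 or \cite{Valdinoci}, and I would cite it rather than reprove it. Everything else is routine.
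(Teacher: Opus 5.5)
Your argument is correct, and it is essentially the proof behind the results the paper cites, \cite[Theorem 6.7 and Corollary 7.2]{Hitchhikers}. That route extends from the Lipschitz domain to $\mathbb{R}^n$ and applies the whole-space fractional Sobolev inequality plus H\"older for the continuous embedding. It then upgrades precompactness in $L^2$ to $L^{\mathfrak{r}}$, $\mathfrak{r}<2n/(n-2s)$, by interpolating against the $L^{2n/(n-2s)}$ bound. The differences are minor: you prove the two whole-space ingredients with Fourier tools (Hardy--Littlewood--Sobolev and a Plancherel translation estimate), which use $p=2$, whereas that source uses real-variable proofs valid for all $p$; and the reference you want for the extension and embedding results is \cite{Hitchhikers} (Theorem 5.4 there for the extension), not \cite{Valdinoci}.
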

\begin{proof}
We refer the reader to \cite[Theorem 6.7]{Hitchhikers} for a proof of $H^{s}(\Omega)\hookrightarrow L^{\mathfrak{r}}(\Omega)$. The fact that the embedding is compact for $\mathfrak{r}<2n/(n - 2s)$ follows from \cite[Corollary 7.2]{Hitchhikers}.
\end{proof}

%%%%%%%%%%%%%%%%%%%%%%%%%%%%%%%%%%%%%%%%%%%%%%%%%%%%
%%%%%%%%%%%%%%%%%%%%%%%%%%%%%%%%%%%%%%%%%%%%%%%%%%%%
%%%%%%%%%%%%%%%%%%%%%%%%%%%%%%%%%%%%%%%%%%%%%%%%%%%%
%%%%%%%%%%%%%%%%%%%%%%%%%%%%%%%%%%%%%%%%%%%%%%%%%%%%

\section{The nonlocal coupled model}\label{sec:nonloc_coupled_model}
Let $s_{1}, s_{2} \in (0,1)$. We define the space
\begin{equation}
H
: =
\tilde H^{s_1}(\Omega_1)\oplus \tilde H^{s_2}(\Omega_2)
= \{ u = u_1 + u_2: u_1 \in \tilde H^{s_1}(\Omega_1), u_2 \in \tilde H^{s_2}(\Omega_2) \}.
\label{eq:def_of_H}
\end{equation}
We equip the space $H$ with the following seminorm:
\begin{equation}
\label{eq:originalNorm}
| u |_{H} := \left( | u_1 |_{\tilde{H}^{s_1}(\Omega_1)}^2
+
| u_2 |_{\tilde{H}^{s_2}(\Omega_2)}^2 \right)^{\frac{1}{2}},
\qquad
u = u_1 + u_2. 
\end{equation}
Owing to the Poincar\'e inequality \eqref{eq:Poincare}, the seminorm $| \cdot |_{H}$ is a norm in $H$.

\subsection{The energy functionals}
We introduce two energy functionals $E$ and $\mathcal{E}$. As established in Section \ref{sec:euler_lagrange} below, the minimization of $\mathcal{E}$ leads to a weak formulation of the coupled system \eqref{def:state_eq}. First, we define $E: H \rightarrow [0,\infty)$ by
\begin{multline}
\label{eq:energy}
E(u) := \frac{\mathcal{C}_{1}}{2} \int_{\Omega_{2}^c}\int_{\Omega_{2}^c}\frac{(u_{1}(x) - u_{1}(y))^2}{|x - y|^{n + 2s_{1}}}\mathrm{d}y\mathrm{d}x
+
\frac{\mathcal{C}_{2}}{2} \int_{\Omega_{1}^c}\int_{\Omega_{1}^c}\frac{(u_{2}(x) - u_{2}(y))^2}{|x - y|^{n + 2s_{2}}}\mathrm{d}y\mathrm{d}x
\\
+ \dfrac{1}{2}\int_{\Omega_1}\int_{\Omega_{2}}J(x - y)(u_{1}(x) - u_{2}(y))^2 \mathrm{d}y\mathrm{d}x.
\end{multline}
Here, for $i \in \{1,2\}$, $\mathcal{C}_{i} = c(n,s_i)/2$, where $c(n,s_i)$ denotes a positive normalization constant.
The first two terms in \eqref{eq:energy} are well-defined by the definition of $H$ in \eqref{eq:def_of_H}, while the third term is well-defined because $J \in L^1(\mathbb{R}^n)$, $u_1 \in L^2(\Omega_1)$, and $u_2 \in L^2(\Omega_2)$. Since $J$ is a nonnegative function, $E(u) \geq 0$ for all $u \in H$. Second, given $f_1 \in L^2(\Omega_1)$ and $f_2 \in L^2(\Omega_2)$, we define $\mathcal{E}: H \rightarrow \mathbb{R}$ by
\begin{equation}
\label{eq:energy_2}
\mathcal{E}(u) := E(u) - \int_{\Omega_1} f_1 u_1 \mathrm{d}x - \int_{\Omega_2} f_2 u_2 \mathrm{d}x.
\end{equation}
Note that $\mathcal{E}$ is well-defined since $E$ is well-defined and $\int_{\Omega_1} f_1 u_1 \mathrm{d}x$ and $\int_{\Omega_2} f_2 u_2 \mathrm{d}x$ are finite by the Cauchy--Schwarz inequality.

\subsection{Properties of the energy functionals}
In this subsection, we examine some properties that the energy functionals $E$ and $\mathcal{E}$ satisfy. We begin with a lower bound for $E$ in $L^2$, which will be used to establish the coercivity of $\mathcal{E}$.

\begin{lemma}[A lower bound for $E$ in $L^2$]\label{lemma:L2_lower_bound_E}
There exists $\mathcal{C} > 0$ such that
\begin{equation}
E(u) \geq \mathcal{C} \left ( \|u_{1}\|^{2}_{L^{2}(\Omega_{1})} + \|u_{2}\|^{2}_{L^{2}(\Omega_{2})} \right) \quad \forall u \in H.
\end{equation}
\end{lemma}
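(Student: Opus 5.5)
The plan is to bound each regional term of $E$ from below by the full $\tilde H^{s_i}$ seminorm, up to a constant, and then use the fractional Poincar\'e inequality \eqref{eq:Poincare}. The coupling term is nonnegative by (J1), so it can simply be discarded. With this route the kernel assumptions (J2)--(J3) play no role in the $L^2$ lower bound; they matter only for continuity and finiteness.

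Consider first $u_1 \in \tilde H^{s_1}(\Omega_1)$, extended by zero outside $\Omega_1$. Its full seminorm splits as
\[
|u_1|^2_{\tilde H^{s_1}(\Omega_1)} = \int_{\Omega_2^c}\int_{\Omega_2^c}\frac{(u_1(x)-u_1(y))^2}{|x-y|^{n+2s_1}}\mathrm{d}y\mathrm{d}x + 2\int_{\Omega_2}\int_{\Omega_2^c}\frac{(u_1(x)-u_1(y))^2}{|x-y|^{n+2s_1}}\mathrm{d}y\mathrm{d}x ,
\]
because the $\Omega_2\times\Omega_2$ block vanishes: $u_1 = 0$ on $\Omega_2$. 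In the cross term, $x \in \Omega_2$ gives $u_1(x)=0$, and only $y \in \Omega_1$ contributes. Since $|x-y|\ge d$ there, the cross term is bounded by
\[
2\int_{\Omega_2}\int_{\Omega_1}\frac{u_1(y)^2}{|x-y|^{n+2s_1}}\mathrm{d}y\mathrm{d}x \le 2|\Omega_2|\, d^{-n-2s_1}\|u_1\|^2_{L^2(\Omega_1)}.
\]
This looks like the wrong direction, since it bounds the cross term by an $L^2$ norm rather than the reverse. The fix is to not use the full seminorm at all and instead get a Poincar\'e inequality directly for the regional seminorm.

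\textbf{Regional Poincar\'e step (the main obstacle).} The region $\Omega_2^c$ contains $\Omega_1$ together with the nonempty open set $D := \Omega_2^c\setminus\overline{\Omega_1}$, on which $u_1 = 0$. Restricting the regional integral to $x\in\Omega_1$, $y\in D_0$, where $D_0\subset D$ is a fixed bounded set of positive measure (for instance $D_0 = (\Omega_1^c\cap\Omega_2^c)\cap B_R$ for a large ball $B_R$), gives
\[
\int_{\Omega_2^c}\int_{\Omega_2^c}\frac{(u_1(x)-u_1(y))^2}{|x-y|^{n+2s_1}}\mathrm{d}y\mathrm{d}x \ge \int_{\Omega_1}u_1(x)^2\int_{D_0}\frac{\mathrm{d}y}{|x-y|^{n+2s_1}}\,\mathrm{d}x \ge c_1\|u_1\|^2_{L^2(\Omega_1)} .
\]
The constant is
\[
c_1 := \inf_{x\in\Omega_1}\int_{D_0}|x-y|^{-n-2s_1}\mathrm{d}y \ \ge\ |D_0|\,\bigl(\mathrm{diam}(\Omega_1\cup D_0)\bigr)^{-n-2s_1} > 0 ,
\]
which holds because $\Omega_1$ and $D_0$ are bounded, so $|x-y|$ is bounded above. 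This is the step to check carefully: it needs $D_0$ to have positive measure inside $\Omega_2^c$ and away from $\Omega_1$. That holds because $\Omega_1\cup\Omega_2$ is bounded, so its complement is unbounded and open.

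The same argument, with the roles of the two domains swapped, gives
\[
\int_{\Omega_1^c}\int_{\Omega_1^c}\frac{(u_2(x)-u_2(y))^2}{|x-y|^{n+2s_2}}\mathrm{d}y\mathrm{d}x \ge c_2\|u_2\|^2_{L^2(\Omega_2)} .
\]
Dropping the nonnegative $J$-term and combining,
\[
E(u) \ge \tfrac{\mathcal C_1 c_1}{2}\|u_1\|^2_{L^2(\Omega_1)} + \tfrac{\mathcal C_2 c_2}{2}\|u_2\|^2_{L^2(\Omega_2)},
\]
so the lemma follows with $\mathcal C := \min\{\mathcal C_1 c_1,\mathcal C_2 c_2\}/2$.
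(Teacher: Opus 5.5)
Your proof is correct, and it takes a more direct route than the paper's. The paper argues by contradiction. It normalizes $\|u_{1,k}\|^2_{L^2(\Omega_1)}+\|u_{2,k}\|^2_{L^2(\Omega_2)}=1$ with $E(u_k)\le k^{-1}$. It then uses the compact embedding $H^{s_i}(\Omega_i)\hookrightarrow L^2(\Omega_i)$ and the connectedness of $\Omega_i$ to identify the limits as constants $C_1,C_2$, and uses the coupling term together with (J3) to get $C_1=C_2$. Only at the end does it use the set $\mathfrak{A}=\{z\in\Omega_2^c:\mathrm{dist}(z,\Omega_2)\le d/2\}$, on which $u_1$ vanishes, to force $C_1=0$. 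That last step is exactly your regional Poincar\'e inequality, with $\mathfrak{A}$ in place of $D_0$. Applied to each $u_{i,k}$ it already gives $\|u_{i,k}\|_{L^2(\Omega_i)}\to 0$, so the compactness, connectedness, and coupling steps are not needed.

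Your version makes this explicit. It yields a computable constant $\mathcal{C}=\min\{\mathcal{C}_1c_1,\mathcal{C}_2c_2\}/2$ and avoids compact embeddings and connectedness. It also shows that the $L^2$ control comes entirely from the interaction of $u_i$ with the exterior region $(\overline{\Omega_1}\cup\overline{\Omega_2})^c$ built into the regional terms. The inequality therefore holds even for $J\equiv 0$ and without (J3), and the paper's compactness argument adds nothing here.

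Two small corrections to your wording:
\begin{itemize}
\item $\Omega_2^c\setminus\overline{\Omega_1}$ is not open (it contains $\partial\Omega_2$), and the complement of $\Omega_1\cup\Omega_2$ is closed, not open. What you need is that $(\overline{\Omega_1}\cup\overline{\Omega_2})^c$ is open and contains the exterior of a large ball, which gives $|D_0|>0$.
\item Your side remark is off: (J3) plays no role in continuity or finiteness either; only (J1)--(J2) are used there.
\end{itemize}
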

\begin{proof}
We proceed by contradiction and assume that there is $\{ u_k \}_{k\in \mathbb{N}} \subset H$, so that for each $k \in \mathbb{N}$ we have
\begin{equation}
 \label{eq:contradiction_argument}
u_k = u_{1,k} + u_{2,k},
\qquad
\| u_{1,k} \|^2_{L^2(\Omega_1)}
+
\| u_{2,k} \|^2_{L^2(\Omega_2)} = 1,
\qquad
E(u_k) \leq k^{-1}.
% \quad
% \forall k \in \mathbb{N}.
\end{equation}
From \eqref{eq:contradiction_argument}, we can directly deduce that $E(u_k) \rightarrow 0$ as $k \uparrow \infty$.
% Without loss of generality, we can assume that $\{ u_{1,k} \}_{k\in \mathbb{N}}$ is such that $u_{1,k} \neq 0$ in $\Omega_1$ for every $k \in \mathbb{N}$.

As a first step, we note that since $u_{1,k}$ vanishes in $\Omega_1^c$, the first term in $E(u_k)$ equals the Slobodecki\u i--Gagliardo seminorm of $u_{1,k}$. Therefore, the definition of $E$ in \eqref{eq:energy} and the nonnegativity of $E$ allow us to conclude that, as $k \uparrow \infty$,
\begin{equation}
\int_{\Omega_{2}^c}\int_{\Omega_{2}^c}\frac{(u_{1,k}(x) - u_{1,k}(y))^2}{|x - y|^{n + 2s_{1}}}\mathrm{d}y\mathrm{d}x \rightarrow 0
\implies
| u_{1,k} |^2_{H^{s_1}(\Omega_1)}
% \int_{\Omega_{1}}\int_{\Omega_{1}}\frac{(u_{1,k}(x) - u_{1,k}(y))^2}{|x - y|^{n + 2s_{1}}}\mathrm{d}x\mathrm{d}y
\rightarrow 0.
\label{eq:convergence_in_semi_norm}
\end{equation}
This convergence result, together with the fact that $\| u_{1,k} \|_{L^2(\Omega_1)} \leq 1$ for all $k \in \mathbb{N}$, shows that the sequence $\{ u_{1,k} \}_{k \in \mathbb{N}}$ is uniformly bounded in $H^{s_1}(\Omega_1)$. Similar arguments show that $\{ u_{2,k} \}_{k \in \mathbb{N}}$ is uniformly bounded in $H^{s_2}(\Omega_2)$. We can thus extract nonrelabeled subsequences $\{ u_{1,k} \}_{k \in \mathbb{N}}$ and $\{ u_{2,k} \}_{k \in \mathbb{N}}$ such that
\begin{equation}
 \label{eq:nonrelabeled_subsequences_1}
\begin{aligned}
 u_{1,k} \rightharpoonup u_1~\mathrm{in}~H^{s_1}(\Omega_1),
 \quad
 u_{1,k} \rightarrow u_1~\mathrm{in}~L^2(\Omega_1),
 \quad
 k \uparrow \infty,
 \\
 u_{2,k} \rightharpoonup u_2~\mathrm{in}~H^{s_2}(\Omega_2),
 \quad
 u_{2,k} \rightarrow u_2~\mathrm{in}~L^2(\Omega_2),
 \quad
 k \uparrow \infty.
 \end{aligned}
\end{equation}
We now note that \eqref{eq:convergence_in_semi_norm} and a similar convergence argument for $\{ u_{2,k} \}_{k \in \mathbb{N}}$ show that the previously extracted subsequences satisfy the strong convergence properties:
\begin{equation}
 \label{eq:nonrelabeled_subsequences_2}
 u_{1,k} \rightarrow u_1~\mathrm{in}~H^{s_1}(\Omega_1),
 \quad
 u_{2,k} \rightarrow u_2~\mathrm{in}~H^{s_2}(\Omega_2),
 \quad
 k \uparrow \infty
\end{equation}
because weak convergence and convergence of norms imply strong convergence in $H^{s_1}(\Omega_1)$ and $H^{s_2}(\Omega_2)$. We again invoke \eqref{eq:convergence_in_semi_norm} and use a similar argument for  $\{ u_{2,k} \}_{k \in \mathbb{N}}$ to conclude that $|u_1|_{H^{s_1}(\Omega_1)} = |u_2|_{H^{s_2}(\Omega_2)} = 0$, and thus $u_1 = C_1$ and $u_2 = C_2$, where $C_1, C_2 \in \mathbb{R}$. Note that here we have used the fact $\Omega_1$ and $\Omega_2$ are connected.

On the other hand, since $E(u_{k}) \rightarrow 0$ as $k \uparrow \infty$, it also follows that
\[
\int_{\Omega_{1}}\int_{\Omega_{2}}J(x - y)(u_{1,k}(x) - u_{2,k}(y))^{2}\mathrm{d}y\mathrm{d}x \rightarrow 0, \quad k \uparrow \infty.
\]
This convergence property, along with the facts that $u_{1} = C_{1}$, $u_{2} = C_{2}$, the nonnegativity of $J$, and the property $\text{supp}(J) \supset B_{r}$, where $r > d = \textrm{dist}(\Omega_1,\Omega_2)$, allows us to conclude that
$$ (C_{1} - C_{2})^{2} \int_{\Omega_{1}}\int_{\Omega_{2}}J(x - y)\mathrm{d}y\mathrm{d}x = 0 \implies C_{1} = C_{2}. $$

As a final step, we show that $C_1 = C_2 = 0$. To do this, we proceed as follows. On one hand, we have that
\begin{multline}
 \int_{\Omega_{2}^c}\int_{\Omega_{2}^c}\frac{(u_{1,k}(x) - u_{1,k}(y))^2}{|x - y|^{n + 2s_{1}}}\mathrm{d}y\mathrm{d}x \rightarrow 0
 \\
 \implies
 \int_{\Omega_1}\int_{\Omega_{1}^c \cap \Omega_{2}^c}\frac{u_{1,k}(x)^2}{|x - y|^{n + 2s_{1}}}\mathrm{d}y\mathrm{d}x \rightarrow 0,
 \qquad k \uparrow \infty.
 \label{eq:auxiliary_convergence}
\end{multline}
We now introduce the set $\mathfrak{A} = \{ z \in \Omega_2^c : \textrm{dist}(z,\Omega_2) \leq d/2 \}$, which lies near $\Omega_2$ and is disjoint from $\Omega_1$. We note that
\[
 \int_{\Omega_1}\int_{\Omega_{1}^c \cap \Omega_{2}^c}\frac{u_{1,k}(x)^2}{|x - y|^{n + 2s_{1}}}\mathrm{d}y\mathrm{d}x
 \geq
 \int_{\Omega_1}\int_{\mathfrak{A}}\frac{u_{1,k}(x)^2}{|x - y|^{n + 2s_{1}}}\mathrm{d}y\mathrm{d}x.
\]
Define $h: \Omega_1 \rightarrow \mathbb{R}$ by $h(x) = \int_{\mathfrak{A}} |x - y|^{-n - 2s_{1}} \mathrm{d}y$. Note that $h \in L^{\infty}(\Omega_1)$. Indeed, since $d = \mathrm{dist}(\Omega_1,\Omega_2)$, we have that $|x - y| \geq d/2$ for $x \in \Omega_1$ and $y\in \mathfrak{A}$. Moreover, since $|x - y|$ is bounded above uniformly for $x \in \Omega_1$ and $y \in \mathfrak{A}$, the integrand $|x - y|^{-n - 2s_{1}}$ is bounded below by a positive constant on $\Omega_1 \times \mathfrak{A}$. Thus, since $|\mathfrak{A}|>0$, $h(x) \geq C_{\mathfrak{A}} > 0$ for all $x \in \Omega_1$. As a result,
\[
\int_{\Omega_1}\int_{\mathfrak{A}}\frac{u_{1,k}(x)^2}{|x - y|^{n + 2s_{1}}}\mathrm{d}y\mathrm{d}x
=
\int_{\Omega_1}u_{1,k}(x)^2 h(x)\mathrm{d}x
\geq
C_{\mathfrak{A}} \int_{\Omega_1}u_{1,k}(x)^2 \mathrm{d}x
\]
for every $k \in \mathbb{N}$. If we take the limit as $k \uparrow \infty$ in the previous inequality and use the strong convergence of $\{ u_{1,k} \}_{k\in\mathbb{N}}$ in $L^2(\Omega_1)$ as $k \uparrow \infty$ (see \eqref{eq:nonrelabeled_subsequences_1}), we obtain
\[
 C_{\mathfrak{A}} \int_{\Omega_1}u_{1}(x)^2 \mathrm{d}x \leq 0.
\]
Note that we have also used \eqref{eq:auxiliary_convergence}. Since $u_1 = C_1$, this shows that $C_{\mathfrak{A}}|\Omega_1| C_{1}^2 = 0$, which implies that $C_{1} = 0$. Therefore, $C_1 = C_2 = 0$, and $u_{1,k} \rightarrow 0$ in $L^2(\Omega_1)$ and $u_{2,k} \rightarrow 0$ in $L^2(\Omega_2)$ as $k\uparrow \infty$. This contradicts  \eqref{eq:contradiction_argument} and concludes the proof.
\end{proof}

\begin{lemma}[$\mathcal{E}$ is convex and continuous]\label{lemma:mathcalE_cont_convex}
The energy functional $\mathcal{E}$ defined in \eqref{eq:energy_2} is convex and continuous on $H$.
\end{lemma}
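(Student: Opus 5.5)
The plan is to write $E$ as a quadratic form and obtain both properties from the associated symmetric bilinear form. For $u=u_1+u_2$ and $v=v_1+v_2$ in $H$, set
\[
\mathcal{A}(u,v) := \mathcal{C}_1 \int_{\Omega_2^c}\int_{\Omega_2^c}\frac{(u_1(x)-u_1(y))(v_1(x)-v_1(y))}{|x-y|^{n+2s_1}}\,\mathrm{d}y\,\mathrm{d}x + \mathcal{C}_2 \int_{\Omega_1^c}\int_{\Omega_1^c}\frac{(u_2(x)-u_2(y))(v_2(x)-v_2(y))}{|x-y|^{n+2s_2}}\,\mathrm{d}y\,\mathrm{d}x + \int_{\Omega_1}\int_{\Omega_2}J(x-y)(u_1(x)-u_2(y))(v_1(x)-v_2(y))\,\mathrm{d}y\,\mathrm{d}x .
\]
This form is symmetric and bilinear, and $E(u)=\tfrac12\mathcal{A}(u,u)$.

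\emph{Convexity.} Since $J\ge0$ by (J1) and the singular kernels are positive, $\mathcal{A}(w,w)\ge 0$ for every $w\in H$. For $u,v\in H$ and $t\in[0,1]$, expanding the quadratic form gives
\[
tE(u)+(1-t)E(v)-E(tu+(1-t)v)=\tfrac{t(1-t)}{2}\,\mathcal{A}(u-v,u-v)\ge 0 .
\]
So $E$ is convex. The remaining part of $\mathcal{E}$, namely $u\mapsto -\int_{\Omega_1} f_1u_1 - \int_{\Omega_2} f_2u_2$, is linear. Hence $\mathcal{E}$ is convex.

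\emph{Continuity.} I will show $|\mathcal{A}(u,v)|\lesssim |u|_H|v|_H$. Then
\[
|E(u)-E(v)| = \tfrac12|\mathcal{A}(u-v,u+v)| \lesssim |u-v|_H\,(|u|_H+|v|_H),
\]
which gives (local Lipschitz) continuity of $E$. For the first term of $\mathcal{A}$, enlarging the domain of integration from $\Omega_2^c\times\Omega_2^c$ to $\mathbb{R}^n\times\mathbb{R}^n$ and applying Cauchy--Schwarz bounds it by $\mathcal{C}_1|u_1|_{\tilde H^{s_1}(\Omega_1)}|v_1|_{\tilde H^{s_1}(\Omega_1)}$; this enlargement is legitimate because the integrand of $|\cdot|^2_{\tilde H^{s_1}(\Omega_1)}$ is nonnegative. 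The second term is treated the same way. For the coupling term, Cauchy--Schwarz with the weight $J(x-y)$ reduces the estimate to bounding $\int_{\Omega_1}\int_{\Omega_2}J(x-y)(u_1(x)-u_2(y))^2\,\mathrm{d}y\,\mathrm{d}x$. Using $(a-b)^2\le 2a^2+2b^2$ and integrating in the free variable, this quantity is at most
\[
2\|J\|_{L^1(\mathbb{R}^n)}\left(\|u_1\|^2_{L^2(\Omega_1)}+\|u_2\|^2_{L^2(\Omega_2)}\right).
\]
Alternatively, Lemma~\ref{lemma:cont_conv} can be applied to $J\star(u_2\chi_{\Omega_2})$ for the cross term. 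The Poincar\'e inequality \eqref{eq:Poincare} then controls both $L^2$ norms by $|u|_H$.

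For the linear part, Cauchy--Schwarz followed by \eqref{eq:Poincare} gives
\[
\left|\int_{\Omega_1} f_1u_1+\int_{\Omega_2} f_2u_2\right| \lesssim \left(\|f_1\|_{L^2(\Omega_1)}+\|f_2\|_{L^2(\Omega_2)}\right)|u|_H .
\]
So this part is a bounded linear functional on $H$, hence continuous, and $\mathcal{E}$ is continuous.

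\emph{Main obstacle.} The only delicate point is the coupling term, since the interaction is nonlocal across the gap between $\Omega_1$ and $\Omega_2$. Assumption (J2) handles it: $J\in L^1(\mathbb{R}^n)$ means the term is controlled by $L^2$ norms alone, with no Sobolev regularity needed, and the Poincar\'e inequality converts those $L^2$ norms into the $H$-norm.
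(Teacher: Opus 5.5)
Your proof is correct, but it is organized differently from the paper's.

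\textbf{The paper's route.} It proves continuity sequentially and term by term. For $u_k\to u$ in $H$, it gets convergence of the two Gagliardo pieces from the reverse triangle inequality for the seminorm restricted to $\Omega_2^c$ (resp.\ $\Omega_1^c$). It then splits the coupling term into $\mathrm{I}_k+\mathrm{II}_k+\mathrm{III}_k$. The terms $\mathrm{I}_k$ and $\mathrm{III}_k$ are handled with the bounded function $g(x)=\int_{\Omega_2}J(x-y)\,\mathrm{d}y$. The cross term $\mathrm{II}_k$ is handled through $\vartheta_k=J\star(u_{2,k}\chi_{\Omega_2})$ and Lemma \ref{lemma:cont_conv}. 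Convexity is simply declared clear.

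\textbf{Your route.} You recognize $E(u)=\frac12\mathcal{A}(u,u)$ up front and prove the bound $|\mathcal{A}(u,v)|\lesssim|u|_H|v|_H$. The paper only establishes that bound afterwards, in Step 2 of Lemma \ref{lemma:prop_A}, by recycling the estimates of this proof. You then read off both properties from the polarization identities.

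\textbf{What your route buys.}
\begin{itemize}
\item A quantitative local Lipschitz bound $|E(u)-E(v)|\lesssim|u-v|_H(|u|_H+|v|_H)$, rather than mere sequential continuity.
\item An actual proof of convexity.
\item Combined with $\mathcal{A}(w,w)=2E(w)\ge 2\mathcal{C}\|w\|^2_{L^2}>0$ for $w\neq 0$ (Lemma \ref{lemma:L2_lower_bound_E}), the same identity gives the strict convexity that the paper later uses for uniqueness, where it only appeals to the ``quadratic structure''.
\end{itemize}
Your handling of the coupling term via $(a-b)^2\le 2a^2+2b^2$ and Tonelli is also more elementary than the convolution argument. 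It reaches the same constant $2\|J\|_{L^1(\mathbb{R}^n)}$.

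\textbf{One wording point.} In the Gagliardo terms, apply Cauchy--Schwarz on $\Omega_2^c\times\Omega_2^c$ first. Only then enlarge the domain of the two nonnegative squared integrals to $\mathbb{R}^n\times\mathbb{R}^n$. The signed integrand of $\mathcal{A}$ cannot be enlarged directly; your parenthetical remark indicates you meant the correct order, but the sentence as written reverses it.
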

\begin{proof}
The fact that $\mathcal{E}$ is convex on $H$ is clear. Now, let us prove that $\mathcal{E}$ is continuous on $H$. To do this, let $\{u_{k}\}_{k \in \mathbb{N}} \subset H$ be such that
\begin{equation}\label{eq:mathcalE_cont_1}
u_{k} = u_{1,k} + u_{2,k},
\quad
u_{1,k} \rightarrow u_1~\mathrm{in}~\tilde{H}^{s_{1}}(\Omega_1),
\quad
u_{2,k} \rightarrow u_2~\mathrm{in}~\tilde{H}^{s_{2}}(\Omega_2), \quad k \uparrow \infty.
\end{equation}
In the following, we prove that $\mathcal{E}(u_{k}) \rightarrow \mathcal{E}(u)$ as $k \uparrow \infty$. From \eqref{eq:mathcalE_cont_1}, the Poincar\'e inequality \eqref{eq:Poincare}, and the fact that $f_1 \in L^2(\Omega_1)$ and $f_2 \in L^2(\Omega_2)$, it follows that
\[
\int_{\Omega_{1}}f_{1}u_{1,k} \mathrm{d}x \rightarrow \int_{\Omega_{1}}f_{1}u_{1} \mathrm{d}x, \qquad \int_{\Omega_{2}}f_{2}u_{2,k} \mathrm{d}x \rightarrow \int_{\Omega_{2}}f_{2}u_{2} \mathrm{d}x, \qquad k \uparrow \infty.
\]
It remains to prove that $E(u_{k}) \rightarrow E(u)$ as $k \uparrow \infty$. To do this, we begin with a straightforward application of the triangle inequality and obtain
\begin{align*}
\begin{split}
| |u_{1,k}|_{H^{s_{1}}(\Omega_{2}^{c})} - |u_{1}|_{H^{s_{1}}(\Omega_{2}^{c})} | \leq | u_{1,k} - u_{1} |_{H^{s_{1}}(\Omega_{2}^{c})} \leq | u_{1,k} - u_{1} |_{H^{s_{1}}(\mathbb{R}^{n})} \rightarrow 0,
\\
| |u_{2,k}|_{H^{s_{2}}(\Omega_{1}^{c})} - |u_{2}|_{H^{s_{2}}(\Omega_{1}^{c})} | \leq | u_{2,k} - u_{2} |_{H^{s_{2}}(\Omega_{1}^{c})} \leq | u_{2,k} - u_{2} |_{H^{s_{2}}(\mathbb{R}^{n})} \rightarrow 0
\end{split}
\end{align*}
as $k \uparrow \infty$. Recall that, for $v \in \tilde{H}^{s}(\Omega)$, $| v |_{\tilde{H}^s(\Omega)} = ( v, v )^{\frac{1}{2}}_{\tilde{H}^{s}(\Omega)} = ( v, v )^{\frac{1}{2}}_{H^{s}(\mathbb{R}^n)}$. We thus immediately obtain that
\begin{align}\label{eq:mathcalE_cont_2}
\begin{split}
\int_{\Omega_{2}^{c}}\int_{\Omega_{2}^{c}}\dfrac{|u_{1,k}(x) - u_{1,k}(y)|^{2}}{|x - y|^{n + 2s_{1}}}\mathrm{d}y\mathrm{d}x  & \rightarrow \int_{\Omega_{2}^{c}}\int_{\Omega_{2}^{c}}\dfrac{|u_{1}(x) - u_{1}(y)|^{2}}{|x - y|^{n + 2s_{1}}}\mathrm{d}y\mathrm{d}x,
\\
\int_{\Omega_{1}^{c}}\int_{\Omega_{1}^{c}}\dfrac{|u_{2,k}(x) - u_{2,k}(y)|^{2}}{|x - y|^{n + 2s_{2}}}\mathrm{d}y\mathrm{d}x  & \rightarrow \int_{\Omega_{1}^{c}}\int_{\Omega_{1}^{c}}\dfrac{|u_{2}(x) - u_{2}(y)|^{2}}{|x - y|^{n + 2s_{2}}}\mathrm{d}y\mathrm{d}x
\end{split}
\end{align}
as $k \uparrow \infty$. We still need to prove the convergence of the nonlocal term representing the coupling. As a first step, we write
\begin{multline*}
\int_{\Omega_{1}}\int_{\Omega_{2}}J(x - y)(u_{1,k}(x) - u_{2,k}(y))^{2}\mathrm{d}y\mathrm{d}x = \int_{\Omega_{1}}\int_{\Omega_{2}}J(x - y)u^{2}_{1,k}(x)\mathrm{d}y\mathrm{d}x
\\
- 2\int_{\Omega_{1}}\int_{\Omega_{2}}J(x - y)u_{1,k}(x)u_{2,k}(y)\mathrm{d}y\mathrm{d}x + \int_{\Omega_{1}}\int_{\Omega_{2}}J(x - y)u_{2,k}^{2}(y)\mathrm{d}y\mathrm{d}x := \mathrm{I}_{k} + \mathrm{II}_{k} + \mathrm{III}_{k}.
\end{multline*}
Let us analyze $\mathrm{I}_{k}$. For this purpose, we define $g: \Omega_{1} \rightarrow \mathbb{R}$ by $g(x):= \int_{\Omega_{2}}J(x - y)\mathrm{d}y$ for a.e.~$x \in \Omega_{1}$ and note that $g \in L^{\infty}(\Omega_{1})$. In fact, for a.e.~$x \in \Omega_{1}$, we have
\[
|g(x)| \leq \int_{\mathbb{R}^{n}}|J(x - y)| \mathrm{d}y = \|J\|_{L^{1}(\mathbb{R}^{n})}.
\]
We now note that the convergence property $u_{1,k} \rightarrow u_1$ in $L^2(\Omega_1)$ as $k \uparrow \infty$ guarantees that $\{ u_{1,k} \}_{k \in \mathbb{N}}$ is uniformly bounded in $L^2(\Omega_1)$ so there exists $\mathcal{M}>0$ such that $\|u_{1,k} + u_1 \|_{L^2(\Omega_1)} \leq \mathcal{M}$ for every $k \in \mathbb{N}$. This, together with $g \in L^{\infty}(\Omega_1)$, shows that
\begin{multline*}
\left| \int_{\Omega_1} g(x) (u^2_{1,k}(x) - u^2_{1}(x)) \mathrm{d}x \right|
\leq \| g \|_{L^{\infty}(\Omega_1)} \int_{\Omega_1} |u^2_{1,k}(x) - u_{1}^2(x)| \mathrm{d}x
\\
\leq \mathcal{M} \| g \|_{L^{\infty}(\Omega_1)} \left( \int_{\Omega_1} |u_{1,k}(x)- u_{1}(x)|^2 \mathrm{d}x \right)^{\frac{1}{2}} \rightarrow 0,
\qquad
k \uparrow \infty.
\end{multline*}
As a result,
\begin{align}\label{eq:mathcalE_cont_3}
\mathrm{I}_{k} = \int_{\Omega_{1}}u^{2}_{1,k}(x)g(x) \mathrm{d}x \rightarrow \int_{\Omega_{1}}\int_{\Omega_{2}}J(x - y)u_{1}^{2}(x) \mathrm{d}y\mathrm{d}x,
\qquad
k \uparrow \infty.
\end{align}
Similar arguments combined with Fubini's Theorem show that
\begin{align}\label{eq:mathcalE_cont_4}
\mathrm{III}_{k} \rightarrow \int_{\Omega_{1}}\int_{\Omega_{2}}J(x - y)u_{2}^{2}(y) \mathrm{d}y\mathrm{d}x, \qquad k \uparrow \infty.
\end{align}
It only remains to analyze $\mathrm{II}_{k}$. In the following, we prove that
\begin{equation}
\mathrm{II}_{k} \rightarrow \mathrm{II} := - 2  \int_{\Omega_{1}}\int_{\Omega_{2}}J(x - y)u_{1}(x)u_{2}(y) \mathrm{d}y\mathrm{d}x,
\qquad
k \uparrow \infty.
\label{eq:IIk--II}
\end{equation}
To this end, for each $k \in \mathbb{N}$, we define $\vartheta_{k}: \mathbb{R}^n \rightarrow \mathbb{R}$ by $\vartheta_{k}(x) = J \ast (u_{2,k} \chi_{\Omega_{2}})(x)$. We also define $\vartheta: \mathbb{R}^n \rightarrow \mathbb{R}$ by $\vartheta(x) = J \ast (u_{2} \chi_{\Omega_{2}})(x)$. We bound the difference $\vartheta - \vartheta_k$ in $L^2(\Omega_1)$ with the help of the continuity property of Lemma \ref{lemma:cont_conv}:
\[
\|\vartheta - \vartheta_{k}\|_{L^{2}(\Omega_{1})} \leq \|J\|_{L^{1}(\mathbb{R}^{n})}\| u_{2} - u_{2,k} \|_{L^{2}(\Omega_{2})} \rightarrow 0, \qquad k \uparrow \infty.
\]
In particular, $\{\vartheta_{k}\}_{k \in \mathbb{N}}$ is uniformly bounded in $L^{2}(\Omega_{1})$. Consequently,
\begin{align*}
& |\mathrm{II}_{k} - \mathrm{II}|
\leq
2 \int_{\Omega_{1}}|u_{1,k}(x) - u_{1}(x)||\vartheta_{k}(x)| \mathrm{d}x
+
2 \int_{\Omega_{1}}|u_{1}(x)||\vartheta_{k}(x) - \vartheta(x)| \mathrm{d}x
\\
& \leq 2 \|u_{1,k} - u_{1}\|_{L^{2}(\Omega_{1})}\|\vartheta_{k}\|_{L^{2}(\Omega_{1})} + 2 \| u_1 \|_{L^{2}(\Omega_{1})}\|\vartheta_{k} - \vartheta\|_{L^{2}(\Omega_{1})} \rightarrow 0, \qquad k \uparrow \infty.
\end{align*}
% Note that $\{u_{1,k}\}_{k \in \mathbb{N}}$ is uniformly bounded in $L^2(\Omega_1)$ because $u_{1,k} \rightarrow u_{1}$ in $\tilde{H}^{s_{1}}(\Omega_{1})$ as $k \uparrow \infty$.
Consequently, $\mathrm{II}_k \rightarrow \mathrm{II}$ as $k \uparrow \infty$, as we intended to show.
% \begin{align}\label{eq:mathcalE_cont_5}
% \mathbf{II}_{k} \rightarrow \int_{\Omega_{1}}\int_{\Omega_{2}}J(x - y)u_{1}(x)u_{2}(y) \mathrm{d}y\mathrm{d}x, \quad k \uparrow \infty.
% \end{align}

The convergence properties \eqref{eq:mathcalE_cont_2}--\eqref{eq:IIk--II} allow us to conclude that $E(u_{k}) \rightarrow E(u)$ as $k \uparrow \infty$. This concludes the proof.
\end{proof}

\subsection{Existence of a minimizer}

We are now ready to present one of the most important results in this section.

\begin{theorem}[existence of a unique minimizer]
Let $s_1,s_2 \in (0,1)$, let $f_{1} \in L^{2}(\Omega_{1})$, and let $f_2 \in L^{2}(\Omega_{2})$. Then, there exists a unique minimizer $u$ of $\mathcal{E}$ in $H$.
\label{thm:existence_of_minimizer}
 \end{theorem}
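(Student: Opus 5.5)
The proof will use the direct method of the calculus of variations in the Hilbert space $H$, endowed with the norm $|\cdot|_H$. There are three ingredients: coercivity of $\mathcal{E}$, weak lower semicontinuity (from Lemma \ref{lemma:mathcalE_cont_convex}), and strict convexity for uniqueness.

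\emph{Coercivity.} For $u=u_1+u_2\in H$, the integrand in the first term of $E$ vanishes when both $x,y\in\Omega_1^c\cap\Omega_2^c$. The contribution from $\Omega_1\times\Omega_1$ and from $\Omega_1\times(\Omega_1^c\cap\Omega_2^c)$ (counted twice by symmetry) is thus part of $|u_1|^2_{\tilde H^{s_1}(\Omega_1)}$. What is missing is
\[
2\int_{\Omega_1}\int_{\Omega_2}\frac{u_1(x)^2}{|x-y|^{n+2s_1}}\,\mathrm{d}y\,\mathrm{d}x \le 2\,d^{-n-2s_1}|\Omega_2|\,\|u_1\|^2_{L^2(\Omega_1)},
\]
since $|x-y|\ge d$ there. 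The same holds for $u_2$. Using the nonnegativity of the coupling term, we get
\[
|u|_H^2 \le C\big(E(u)+\|u_1\|^2_{L^2(\Omega_1)}+\|u_2\|^2_{L^2(\Omega_2)}\big).
\]
Lemma \ref{lemma:L2_lower_bound_E} then turns this into $E(u)\ge c\,|u|_H^2$ for some $c>0$. Combining this with Cauchy--Schwarz and the Poincar\'e inequality \eqref{eq:Poincare} on the linear terms gives
\[
\mathcal{E}(u)\ge c|u|_H^2 - C(\|f_1\|_{L^2}+\|f_2\|_{L^2})|u|_H \to\infty \quad\text{as } |u|_H\to\infty.
\]
In particular $\mathcal{E}$ is bounded below on $H$.

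\emph{Existence.} Take a minimizing sequence $\{u_k\}$. By coercivity it is bounded in $H$, and $H$ is reflexive (a Hilbert space, being a direct sum of Hilbert spaces), so a subsequence converges weakly to some $u\in H$. By Lemma \ref{lemma:mathcalE_cont_convex}, $\mathcal{E}$ is convex and continuous. Its sublevel sets are therefore closed and convex, hence weakly closed (Mazur), so $\mathcal{E}$ is weakly lower semicontinuous. It follows that $\mathcal{E}(u)\le\liminf\mathcal{E}(u_k)=\inf_H\mathcal{E}$, and $u$ is a minimizer.

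\emph{Uniqueness.} This is the one place needing care, because the coupling term alone is not strictly convex. Instead, I observe that $E(u)=\tfrac12 a(u,u)$ for a symmetric bilinear form $a$ with $a(v,v)=2E(v)\ge 2c|v|_H^2$ by the bound above. For $u\ne w$ this gives
\[
E\!\left(\tfrac{u+w}{2}\right)=\tfrac12E(u)+\tfrac12E(w)-\tfrac14E(u-w)<\tfrac12E(u)+\tfrac12E(w),
\]
while the linear terms are affine. Hence $\mathcal{E}$ is strictly convex, and two distinct minimizers would give a strictly smaller value at their midpoint, a contradiction. The main obstacle is the coercivity step, namely controlling the missing $\Omega_1\times\Omega_2$ part of the Gagliardo seminorm. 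It is handled by the positive gap $d$ together with Lemma \ref{lemma:L2_lower_bound_E}.
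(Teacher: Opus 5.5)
Your proposal is correct and follows the paper's proof. You use the same splitting of $|u_1|^2_{\tilde H^{s_1}(\Omega_1)}$, with the gap bound $2|\Omega_2|d^{-n-2s_1}\|u_1\|^2_{L^2(\Omega_1)}$ on the missing $\Omega_1\times\Omega_2$ piece, and combine it with Lemma \ref{lemma:L2_lower_bound_E} to get $E(u)\gtrsim|u|_H^2$, followed by the direct method and uniqueness via strict convexity. The only difference is that you spell out the weak-lower-semicontinuity (Mazur) step and make the paper's ``quadratic structure'' remark precise through the positive definiteness of $E$.
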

\begin{proof}
We proceed according to the direct method of the calculus of variations \cite[Theorem 5.51]{MR3026831}. To apply this method, we must verify that $\mathcal{E}$ is proper, convex, lower semicontinuous, and coercive in $H$. Since $\mathcal{E}$ is well-defined over $H$, it is clear that $\mathcal{E}$ is proper in $H$. The convexity and continuity of $\mathcal{E}$ follow from Lemma \ref{lemma:mathcalE_cont_convex}. It remains to prove that $\mathcal{E}$ is coercive in $H$.

Let $u = u_{1} + u_{2} \in H$, where $u_{1} \in \tilde{H}^{s_{1}}(\Omega_{1})$ and $u_{2} \in \tilde{H}^{s_{2}}(\Omega_{2})$. To prove the coercivity of $\mathcal{E}$, we first show that there exists a constant $C > 0$ such that
\begin{align}\label{eq:ineq_tildeHs_Hs_norm}
\begin{split}
\frac{\mathcal{C}_1}{2} \int_{\Omega_{2}^{c}}\int_{\Omega_{2}^{c}} \dfrac{|u_{1}(x) - u_{1}(y)|^{2}}{|x - y|^{n+2s_{1}}} \mathrm{d}y\mathrm{d}x 
+ 
\frac{\mathcal{C}_2}{2} \int_{\Omega_{1}^{c}}\int_{\Omega_{1}^{c}} \dfrac{|u_{2}(x) - u_{2}(y)|^{2}}{|x - y|^{n+2s_{2}}} \mathrm{d}y\mathrm{d}x
\\
\geq \frac{\mathcal{C}_1}{2}|u_1|_{\tilde{H}^{s_1}(\Omega_1)}^{2} + \frac{\mathcal{C}_2}{2}|u_2|_{\tilde{H}^{s_2}(\Omega_2)}^{2} - C \left ( \|u_{1}\|_{L^{2}(\Omega_{1})}^{2} + \|u_{2}\|_{L^{2}(\Omega_{2})}^{2} \right).
\end{split}
\end{align}
As a first step in deriving \eqref{eq:ineq_tildeHs_Hs_norm}, we use Fubini's theorem and the fact that $u_{1} = 0$ in $\Omega_{2}$ to obtain the following identity:
$$
|u_{1}|_{\tilde{H}^{s_{1}}(\Omega_{1})}^{2} = \int_{\Omega_{2}^{c}}\int_{\Omega_{2}^{c}} \dfrac{|u_{1}(x) - u_{1}(y)|^{2}}{|x - y|^{n+2s_{1}}} \mathrm{d}y\mathrm{d}x + 2\int_{\Omega_{2}^{c}}\int_{\Omega_{2}} \dfrac{|u_{1}(x) - u_{1}(y)|^{2}}{|x - y|^{n+2s_{1}}} \mathrm{d}y\mathrm{d}x.
$$
Let us now control the second term on the right-hand side of the previous expression. To this end, we first note that if $x \in \Omega_1$ and $y \in \Omega_2$, then $|x - y| \geq d = \mathrm{dist}(\Omega_1,\Omega_2)$. Exploiting further that $u_{1} = 0$ in $\Omega_{2}$, we write
\begin{align*}
\int_{\Omega_{2}^{c}}\int_{\Omega_{2}} \dfrac{|u_{1}(x) - u_{1}(y)|^{2}}{|x - y|^{n+2s_{1}}} \mathrm{d}y\mathrm{d}x
=
\int_{\Omega_1}\int_{\Omega_{2}} \dfrac{|u_{1}(x)|^{2}}{|x - y|^{n+2s_{1}}} \mathrm{d}y\mathrm{d}x
\leq |\Omega_{2}|d^{-n-2s_{1}}\|u_{1}\|^{2}_{L^{2}(\Omega_{1})}.
\end{align*}
As a result, we obtain the following bound
\begin{equation}\label{eq:first_ineq_tildeHs_Hs_norm}
|u_{1}|_{\tilde{H}^{s_{1}}(\Omega_{1})}^{2} - 2|\Omega_{2}|d^{-n-2s_{1}}\|u_{1}\|_{L^{2}(\Omega_{1})}^{2}
\leq \int_{\Omega_{2}^{c}}\int_{\Omega_{2}^{c}} \dfrac{|u_{1}(x) - u_{1}(y)|^{2}}{|x - y|^{n+2s_{1}}} \mathrm{d}y\mathrm{d}x.
\end{equation}
Similarly, the following estimate can be obtained
\begin{equation}\label{eq:second_ineq_tildeHs_Hs_norm}
|u_{2}|_{\tilde{H}^{s_2}(\Omega_{2})}^{2} - 2|\Omega_{1}|d^{-n-2s_{2}}\|u_{2}\|_{L^{2}(\Omega_{2})}^{2} \leq \int_{\Omega_{1}^{c}}\int_{\Omega_{1}^{c}} \dfrac{|u_{2}(x) - u_{2}(y)|^{2}}{|x - y|^{n+2s_{2}}} \mathrm{d}y\mathrm{d}x.
\end{equation}
If we add the inequalities in \eqref{eq:first_ineq_tildeHs_Hs_norm} and \eqref{eq:second_ineq_tildeHs_Hs_norm}, we obtain the desired estimate \eqref{eq:ineq_tildeHs_Hs_norm}. Using bound \eqref{eq:ineq_tildeHs_Hs_norm} and the nonnegativity of $J$, we deduce that
\[
E(u) \geq \frac{\mathcal{C}_1}{2}|u_1|_{\tilde{H}^{s_1}(\Omega_1)}^{2} + \frac{\mathcal{C}_2}{2}|u_2|_{\tilde{H}^{s_2}(\Omega_2)}^{2} - C \left( \|u_{1}\|_{L^{2}(\Omega_{1})}^{2} + \|u_{2}\|_{L^{2}(\Omega_{2})}^{2} \right).
\]
We can now apply the estimate from Lemma \ref{lemma:L2_lower_bound_E} and conclude that
\begin{equation}\label{eq:coercivity_E}
\mathfrak{C} E (u) \geq \frac{\mathcal{C}_1}{2}|u_1|_{\tilde{H}^{s_1}(\Omega_1)}^{2} + \frac{\mathcal{C}_2}{2}|u_2|_{\tilde{H}^{s_2}(\Omega_2)}^{2},
\end{equation}
where $\mathfrak{C} = 1 + \mathcal{C}^{-1}C$, $\mathcal{C}$ is the constant from Lemma \ref{lemma:L2_lower_bound_E}, and $C$ is the constant from \eqref{eq:ineq_tildeHs_Hs_norm}. Combining   \eqref{eq:coercivity_E} with the definition of $\mathcal{E}$ given in \eqref{eq:energy_2} and applying H\"older's and Young's inequalities, we obtain
\begin{multline*}
\mathfrak{C} \mathcal{E}(u) 
\geq 
\frac{\mathcal{C}_1}{2}|u_1|_{\tilde{H}^{s_1}(\Omega_1)}^{2}
+ 
\frac{\mathcal{C}_2}{2}|u_2|_{\tilde{H}^{s_2}(\Omega_2)}^{2}
\\
-
\mathfrak{C}\left( \epsilon_1 \|u_{1}\|_{L^{2}(\Omega_{1})}^{2} + \epsilon_2 \|u_{2}\|_{L^{2}(\Omega_{2})}^{2} 
+
\frac{1}{4\epsilon_1}\|f_{1}\|_{L^{2}(\Omega_{1})}^{2} 
+
\frac{1}{4\epsilon_2}\|f_{2}\|_{L^{2}(\Omega_{2})}^{2} \right),
\end{multline*}
where $\epsilon_1,\epsilon_2>0$. We now apply the Poincar\'e inequality \eqref{eq:Poincare} and choose $\epsilon_1$ and $\epsilon_2$ sufficiently small to obtain
\begin{equation*}
\mathcal{E}(u) 
\gtrsim
|u_1|_{\tilde{H}^{s_1}(\Omega_1)}^{2}
+ 
|u_2|_{\tilde{H}^{s_2}(\Omega_2)}^{2}
-
\|f_{1}\|_{L^{2}(\Omega_{1})}^{2} 
-
\|f_{2}\|_{L^{2}(\Omega_{2})}^{2}.
\end{equation*}
We can thus conclude that $\mathcal{E}$ is coercive in $H$; that is, $\mathcal{E}(u) \rightarrow \infty$ as $|u|_{H} \rightarrow \infty$.

After proving that $\mathcal{E}$ is coercive in $H$, the existence of a minimizer follows from the direct method of the calculus of variations. Uniqueness follows from the strict convexity of the functional $\mathcal{E}$. Since $\mathcal{E}$ is obtained from $E$ by subtracting linear terms, it suffices to note that $E$ is strictly convex. This follows from its quadratic structure.
\end{proof}

\subsection{The Euler--Lagrange equations}
\label{sec:euler_lagrange}

Let $u$ be the minimizer of $\mathcal{E}$ in $H$. The existence and uniqueness of this minimizer are guaranteed by Theorem \ref{thm:existence_of_minimizer}. We now derive the corresponding nonlocal coupled problem that determines $u$. Since $u$ is a minimizer of $\mathcal{E}$ in $H$, for every function $v \in H$, we have
\[
\dfrac{\partial}{\partial t}\mathcal{E}(u + tv)\Big|_{t = 0} = 0.
\]
A direct calculation shows that $u$ solves the weak formulation: Find $u \in H$ such that
\begin{equation}
\label{eq:euler_lagrange_eq}
\mathcal{A}(u,v) = \int_{\Omega_{1}}f_{1}(x)v_{1}(x) \mathrm{d}x + \int_{\Omega_{2}}f_{2}(x)v_{2}(x) \mathrm{d}x
\quad
\forall v \in H.
\end{equation}
Here, $\mathcal{A}: H \times H \rightarrow \mathbb{R}$ is defined as
\begin{multline}
 \mathcal{A}(u,v) = \mathcal{C}_{1}\int_{\Omega_{2}^{c}}\int_{\Omega_{2}^{c}}\dfrac{(u_{1}(x) - u_{1}(y))(v_{1}(x) - v_{1}(y))}{|x - y|^{n + 2s_{1}}}\mathrm{d}y\mathrm{d}x
 \\
 + \mathcal{C}_{2}\int_{\Omega_{1}^{c}}\int_{\Omega_{1}^{c}}\dfrac{(u_{2}(x) - u_{2}(y))(v_{2}(x) - v_{2}(y))}{|x - y|^{n + 2s_{2}}}\mathrm{d}y\mathrm{d}x
 \\
 + \int_{\Omega_{1}}\int_{\Omega_{2}}J(x - y)(u_{1}(x) - u_{2}(y))(v_{1}(x) - v_{2}(y))\mathrm{d}y\mathrm{d}x.
 \label{eq:bilinear_form}
\end{multline}

From the weak formulation \eqref{eq:euler_lagrange_eq}, we obtain a nonlocal coupled system as follows: First, we consider $v \in H$ with $v_2 = 0$ in \eqref{eq:euler_lagrange_eq} to obtain
\begin{align}\label{eq:first_EL_eq}
\begin{split}
& \mathcal{C}_{1}\int_{\Omega_{2}^{c}}\int_{\Omega_{2}^{c}}\dfrac{(u_{1}(x) - u_{1}(y))(v_{1}(x) - v_{1}(y))}{|x - y|^{n + 2s_{1}}}\mathrm{d}y\mathrm{d}x \\
& \quad + \int_{\Omega_{1}}\int_{\Omega_{2}}J(x - y)(u_{1}(x) - u_{2}(y))v_{1}(x)\mathrm{d}y\mathrm{d}x = \int_{\Omega_{1}}f_{1}(x)v_{1}(x) \mathrm{d}x
\end{split}
\end{align}
for all $v_1 \in \tilde{H}^{s_1}(\Omega_1)$. Second, we consider $v \in H$ with $v_{1} = 0$ in \eqref{eq:euler_lagrange_eq} to obtain
\begin{align}\label{eq:second_EL_eq}
\begin{split}
& \mathcal{C}_{2}\int_{\Omega_{1}^{c}}\int_{\Omega_{1}^{c}}\dfrac{(u_{2}(x) - u_{2}(y))(v_{2}(x) - v_{2}(y))}{|x - y|^{n + 2s_{2}}}\mathrm{d}y\mathrm{d}x \\
& \quad + \int_{\Omega_{1}}\int_{\Omega_{2}}J(x - y)(u_{2}(y) - u_{1}(x))v_{2}(y)\mathrm{d}y\mathrm{d}x = \int_{\Omega_{2}}f_{2}(x)v_{2}(x) \mathrm{d}x
\end{split}
\end{align}
for all $v_2 \in \tilde{H}^{s_2}(\Omega_2)$. 

\begin{remark}[equivalence and weak formulation] 
The following comments are appropriate.
\begin{enumerate}
\item The system \eqref{eq:first_EL_eq}--\eqref{eq:second_EL_eq} corresponds to a weak formulation of \eqref{def:state_eq}.
\item By construction, if $u = u_1 + u_2 \in H$ solves \eqref{eq:euler_lagrange_eq}, then $u_1$ and $u_2$ solve the nonlocal coupled problem \eqref{eq:first_EL_eq}--\eqref{eq:second_EL_eq}. Conversely, since $H = \tilde{H}^{s_1}(\Omega_1) \oplus \tilde{H}^{s_2}(\Omega_2)$, \eqref{eq:euler_lagrange_eq} can be obtained by adding \eqref{eq:first_EL_eq} and \eqref{eq:second_EL_eq}. Therefore, problem \eqref{eq:euler_lagrange_eq} and system \eqref{eq:first_EL_eq}--\eqref{eq:second_EL_eq} are equivalent.
\end{enumerate}
\end{remark}

Since it will be useful later, in the next lemma we present the most important properties that form $\mathcal{A}$ satisfies.

\begin{lemma}[properties of $\mathcal{A}$]\label{lemma:prop_A}
 $\mathcal{A}$ is bilinear, continuous, and coercive in $H \times H$, that is, there exist positive constants $M$ and $\alpha$ such that $\mathcal{A}(u,v) \leq M | u |_{H} | v |_{H}$ and $\mathcal{A}(v,v) \geq \alpha | v |^2_{H}$ for all $u,v \in H$.
\end{lemma}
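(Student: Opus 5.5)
Bilinearity is immediate from the definition \eqref{eq:bilinear_form}, since each of the three terms is a product of an expression linear in $u$ and one linear in $v$, integrated against a fixed nonnegative kernel. Coercivity is also quick: $\mathcal{A}(v,v) = 2E(v)$ by comparing \eqref{eq:bilinear_form} with \eqref{eq:energy}. The estimate \eqref{eq:coercivity_E} from the proof of Theorem~\ref{thm:existence_of_minimizer} gives $\mathfrak{C}E(v) \geq \frac{\mathcal{C}_1}{2}|v_1|^2_{\tilde H^{s_1}(\Omega_1)} + \frac{\mathcal{C}_2}{2}|v_2|^2_{\tilde H^{s_2}(\Omega_2)}$. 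Hence
\[
\mathcal{A}(v,v) \geq \alpha |v|_H^2, \qquad \alpha = \mathfrak{C}^{-1}\min\{\mathcal{C}_1,\mathcal{C}_2\}.
\]
This step relies on Lemma~\ref{lemma:L2_lower_bound_E}, which absorbs the negative $L^2$ terms in \eqref{eq:ineq_tildeHs_Hs_norm}. That absorption is the genuinely delicate ingredient, but it is already established.

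For continuity, I would bound the three terms separately. In the first term, the integrand is controlled in absolute value by its integral over $\mathbb{R}^n\times\mathbb{R}^n$, because $\Omega_2^c\times\Omega_2^c \subset \mathbb{R}^n\times\mathbb{R}^n$. Cauchy--Schwarz then gives a bound of $\mathcal{C}_1 |u_1|_{\tilde H^{s_1}(\Omega_1)}|v_1|_{\tilde H^{s_1}(\Omega_1)}$. The second term is handled analogously and gives $\mathcal{C}_2|u_2|_{\tilde H^{s_2}(\Omega_2)}|v_2|_{\tilde H^{s_2}(\Omega_2)}$.

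The coupling term needs the most care, though it is routine. Write $J(x-y) = J^{1/2}J^{1/2}$ and apply Cauchy--Schwarz on $\Omega_1\times\Omega_2$ to reduce to $\int_{\Omega_1}\int_{\Omega_2} J(x-y)(u_1(x)-u_2(y))^2\,\mathrm{d}y\,\mathrm{d}x$. Using $(a-b)^2 \le 2a^2+2b^2$, this is at most $2\|J\|_{L^1(\mathbb{R}^n)}(\|u_1\|^2_{L^2(\Omega_1)} + \|u_2\|^2_{L^2(\Omega_2)})$. Here I would invoke Fubini together with $\int_{\Omega_2}J(x-y)\,\mathrm{d}y \le \|J\|_{L^1(\mathbb{R}^n)}$, which is the same bound on $g$ used in Lemma~\ref{lemma:mathcalE_cont_convex}, and its symmetric counterpart in $x$. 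The same bound holds for $v$.

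The Poincar\'e inequality \eqref{eq:Poincare} then converts the $L^2$ norms into $\tilde H^{s_i}$ seminorms. Combining the three estimates with the discrete Cauchy--Schwarz inequality yields $\mathcal{A}(u,v)\le M|u|_H|v|_H$, with $M$ depending on $\mathcal{C}_i$, $\|J\|_{L^1(\mathbb{R}^n)}$ and the Poincar\'e constants.
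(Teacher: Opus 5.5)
Your proof is correct and follows the paper's argument. Bilinearity is by inspection, continuity is proved term by term via Cauchy--Schwarz and the Poincar\'e inequality, and coercivity comes from $\mathcal{A}(v,v)=2E(v)$ together with \eqref{eq:coercivity_E}. The only deviation is in the coupling term: the paper expands $(u_1(x)-u_2(y))^2$ and controls the cross term with the convolution bound of Lemma~\ref{lemma:cont_conv}, whereas your use of $(a-b)^2\le 2a^2+2b^2$ with Tonelli avoids the convolution and yields the same constant $2\|J\|_{L^1(\mathbb{R}^n)}$.
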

\begin{proof}
We divide the proof into three steps.

\emph{Step 1.} The fact that $\mathcal{A}$ is bilinear is straightforward.

\emph{Step 2.} We now prove that $\mathcal{A}$ is bounded in $H \times H$. Let $u, v \in H$, and note that
\begin{align}
 \mathcal{C}_{1}\int_{\Omega_{2}^{c}}\int_{\Omega_{2}^{c}}\dfrac{(u_{1}(x) - u_{1}(y))(v_{1}(x) - v_{1}(y))}{|x - y|^{n + 2s_{1}}}\mathrm{d}y\mathrm{d}x
 \leq \mathcal{C}_1 | u_1 |_{\tilde{H}^{s_1}(\Omega_1)}| v_1 |_{\tilde{H}^{s_1}(\Omega_1)},
 \label{eq:continuity_C1}
 \\
  \mathcal{C}_{2}\int_{\Omega_{1}^{c}}\int_{\Omega_{1}^{c}}\dfrac{(u_{2}(x) - u_{2}(y))(v_{2}(x) - v_{2}(y))}{|x - y|^{n + 2s_{2}}}\mathrm{d}y\mathrm{d}x
 \leq \mathcal{C}_2 | u_2 |_{\tilde{H}^{s_2}(\Omega_2)}| v_2 |_{\tilde{H}^{s_2}(\Omega_2)},
 \label{eq:continuity_C2}
 \end{align}
which follow directly from the fact that $u_1$ and $v_1$ vanish outside $\Omega_1$, and $u_2$ and $v_2$ vanish outside $\Omega_2$, as well as from the Cauchy–Schwarz inequality. We now analyze the nonlocal term representing the coupling. To do this, we first bound it as
\begin{multline*}
\int_{\Omega_{1}}\int_{\Omega_{2}}J(x - y)(u_{1}(x) - u_{2}(y))(v_{1}(x) - v_{2}(y))\mathrm{d}y\mathrm{d}x
\\
\leq
\left[ \int_{\Omega_{1} \times \Omega_2} \! J(x - y)|u_{1}(x) - u_{2}(y)|^2\mathrm{d}y\mathrm{d}x \right]^{\frac{1}{2}}
\left[ \int_{\Omega_{1} \times \Omega_2} \! J(x - y)|v_{1}(x) - v_{2}(y)|^2\mathrm{d}y\mathrm{d}x \right]^{\frac{1}{2}}.
\end{multline*}
Let us now write the square of the first term on the right-hand side of the previous bound as follows; the other term can be treated similarly:
\begin{multline}
\int_{\Omega_{1}} \int_{\Omega_2} J(x - y)(u_{1}(x) - u_{2}(y))^2\mathrm{d}y\mathrm{d}x
=
\int_{\Omega_{1}} \int_{\Omega_2} J(x - y)u^2_{1}(x) \mathrm{d}y\mathrm{d}x
\\
- 2 \int_{\Omega_{1}} \int_{\Omega_2} J(x - y)u_{1}(x)u_{2}(y) \mathrm{d}y\mathrm{d}x
+
\int_{\Omega_{1}} \int_{\Omega_2} J(x - y)u^2_{2}(y) \mathrm{d}y\mathrm{d}x
 := \mathrm{I}_1 -2 \mathrm{I}_2 + \mathrm{I}_3.
\end{multline}
The control of $\mathrm{I}_1$, $\mathrm{I}_2$, and $\mathrm{I}_3$ follows from the arguments in the proof of Lemma \ref{lemma:mathcalE_cont_convex}. Below, we briefly present these arguments. First, we have the bounds
\begin{equation}
 \mathrm{I}_1 \leq \| J \|_{L^1(\mathbb{R}^n)} \| u_1 \|^2_{L^2(\Omega_1)},
 \qquad
 \mathrm{I}_3 \leq  \| J \|_{L^1(\mathbb{R}^n)} \| u_2 \|^2_{L^2(\Omega_2)}.
 \label{eq:I_1_and_I_3}
\end{equation}
Second, using the definition of convolution, the Cauchy–Schwarz inequality, the estimate from Lemma \ref{lemma:cont_conv}, and Young’s inequality, we obtain
\begin{multline}
 \mathrm{I}_2 = \int_{\Omega_1} u_1(x) (J \star u_2 \chi_{\Omega_2})(x) \mathrm{d}x
 \leq \| u_1 \|_{L^2(\Omega_1)} \| J \star u_2 \chi_{\Omega_2} \|_{L^2(\mathbb{R}^n)}
 \\
 \leq \| u_1 \|_{L^2(\Omega_1)} \| J  \|_{L^1(\mathbb{R}^n)} \|  u_2  \|_{L^2(\Omega_2)}
 \leq \frac{\| J  \|_{L^1(\mathbb{R}^n)} }{2} \left[ \| u_1 \|^2_{L^2(\Omega_1)} + \|  u_2  \|^2_{L^2(\Omega_2)} \right].
 \label{eq:I_2}
\end{multline}
A collection of bounds \eqref{eq:continuity_C1}, \eqref{eq:continuity_C2}, \eqref{eq:I_1_and_I_3}, and \eqref{eq:I_2}, and their analogues for $v$,
and a suitable use of the Poincar\'e inequality \eqref{eq:Poincare} establishes the continuity of $\mathcal{A}$:
\begin{equation}
\mathcal{A}(u,v) \leq C |u|_H |v|_H \quad
\forall
u,v \in H.
\end{equation}

\emph{Step 3.} Finally, we prove that the bilinear form $\mathcal{A}$ is coercive in $H \times H$. Given $u \in H$, we observe from the definitions of $E$ and $\mathcal{A}$ given in \eqref{eq:energy} and \eqref{eq:bilinear_form}, respectively, that $\mathcal{A}(u,u) = 2E(u)$. We then use the bound \eqref{eq:coercivity_E} to obtain
\begin{equation}
\mathcal{A}(u,u) \geq \alpha \left( |u_1|_{\tilde{H}^{s_1}(\Omega_1)}^{2} + |u_2|_{\tilde{H}^{s_2}(\Omega_2)}^{2} \right) = \alpha |u|_{H}^{2}.
\label{eq:A_is_coercive}
\end{equation}

We have thus proved that $\mathcal{A}$ is bilinear, continuous, and coercive in $H \times H$. This concludes the proof.
\end{proof}

\begin{remark}[inner product]
Since $\mathcal{A}$ is symmetric in $H \times H$,  Lemma \ref{lemma:prop_A} guarantees that $\mathcal{A}$ defines an inner product on $H$.
\label{rem:inner_product}
\end{remark}

We conclude this section with the following stability bound.

\begin{theorem}[stability bound]\label{thm:stab_bound}
The solution $u = u_1 + u_2 \in \tilde{H}^{s_1}(\Omega_1) \oplus \tilde{H}^{s_2}(\Omega_2)$ of \eqref{eq:euler_lagrange_eq}, or equivalently of the coupled system \eqref{eq:first_EL_eq} and \eqref{eq:second_EL_eq}, satisfies the following stability bound 
\begin{equation}
 |u_1|_{\tilde{H}^{s_1}(\Omega_1)}
 +
 |u_2|_{\tilde{H}^{s_2}(\Omega_2)}
 \lesssim
 \| f_1 \|_{L^2(\Omega_1)}
+
 \| f_2 \|_{L^2(\Omega_2)},
\label{eq:stability_estimate_continuous}
\end{equation}
with a hidden constant that is independent of $u$, $f_1$, and $f_2$.
\end{theorem}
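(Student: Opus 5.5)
The plan is the standard Lax--Milgram energy argument, using the coercivity of $\mathcal{A}$ from Lemma~\ref{lemma:prop_A} together with the Poincar\'e inequality~\eqref{eq:Poincare}.

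First, test the weak formulation~\eqref{eq:euler_lagrange_eq} with $v = u$. By~\eqref{eq:A_is_coercive}, this gives
\[
\alpha |u|_H^2 \leq \mathcal{A}(u,u) = \int_{\Omega_1} f_1 u_1 \,\mathrm{d}x + \int_{\Omega_2} f_2 u_2 \,\mathrm{d}x .
\]

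Second, bound the right-hand side. Apply the Cauchy--Schwarz inequality on each subdomain, then~\eqref{eq:Poincare} with constants $C_i = C(\Omega_i,n,s_i)$:
\[
\int_{\Omega_i} f_i u_i \,\mathrm{d}x \leq C_i \| f_i \|_{L^2(\Omega_i)} |u_i|_{\tilde H^{s_i}(\Omega_i)} .
\]
Summing over $i$ and using the discrete Cauchy--Schwarz inequality, the right-hand side is at most
\[
\max\{C_1,C_2\}\,\bigl(\|f_1\|_{L^2(\Omega_1)}^2 + \|f_2\|_{L^2(\Omega_2)}^2\bigr)^{1/2} |u|_H .
\]

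Third, conclude. If $u \neq 0$, divide by $|u|_H$; if $u = 0$ there is nothing to prove. This yields
\[
|u|_H \leq \alpha^{-1}\max\{C_1,C_2\}\bigl(\|f_1\|_{L^2(\Omega_1)}^2+\|f_2\|_{L^2(\Omega_2)}^2\bigr)^{1/2}.
\]
The equivalence of the $\ell^1$ and $\ell^2$ norms on $\mathbb{R}^2$ then turns this into~\eqref{eq:stability_estimate_continuous}, since $|u_1|_{\tilde H^{s_1}(\Omega_1)} + |u_2|_{\tilde H^{s_2}(\Omega_2)} \leq \sqrt{2}\,|u|_H$ and $(a^2+b^2)^{1/2} \leq a+b$ for $a,b \geq 0$.

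The hidden constant depends only on $\alpha$, the Poincar\'e constants, and hence on $n$, $s_i$, $\Omega_i$, and $J$ through $\alpha$. It does not depend on $u$, $f_1$, or $f_2$.

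There is no real obstacle. The only nontrivial input is the coercivity constant $\alpha$, which is already supplied by~\eqref{eq:coercivity_E}, itself resting on Lemma~\ref{lemma:L2_lower_bound_E}.
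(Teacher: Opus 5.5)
Your proposal is correct and matches the paper's proof: test \eqref{eq:euler_lagrange_eq} with $v=u$, apply the coercivity \eqref{eq:A_is_coercive}, and bound the right-hand side using Cauchy--Schwarz and the Poincar\'e inequality \eqref{eq:Poincare}. You also spell out the ``basic inequalities'' that the paper leaves implicit, namely division by $|u|_H$ and the comparison of $\ell^1$ and $\ell^2$ norms.
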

\begin{proof}
 Set $v = u \in H$ in \eqref{eq:euler_lagrange_eq} to obtain $\mathcal{A}(u,u) = (f_1,u_1)_{L^2(\Omega_1)} + (f_2,u_2)_{L^2(\Omega_2)}$. The coercivity property of the bilinear form $\mathcal{A}$ stated in \eqref{eq:A_is_coercive}, combined with basic inequalities and the Poincar\'e inequality \eqref{eq:Poincare}, yields the desired stability estimate \eqref{eq:stability_estimate_continuous}. This concludes the proof.
\end{proof}

\subsection{Regularity properties}

We now examine regularity results for the solution of \eqref{eq:euler_lagrange_eq}.

\begin{theorem}[Sobolev regularity]\label{thm:Sob_reg}
Let $s_{1},s_{2} \in (0,1)$, let $f_{1} \in L^{2}(\Omega_{1})$, and let $f_{2} \in L^{2}(\Omega_{2})$. Let $u = u_{1} + u_{2} \in H$ be the unique solution to \eqref{eq:euler_lagrange_eq}. Then, for each $i \in \{1,2\}$, $u_{i} \in H^{s_{i} + \kappa_{i} - \varepsilon_{i}}(\Omega_{i})$ for all $0 < \varepsilon_{i} < s_{i}$, where $\kappa_{i} = \tfrac{1}{2}$ for $ \tfrac{1}{2} < s_{i} < 1$ and $\kappa_{i} = s_{i} - \varepsilon_{i}$ for $0 < s_{i} \leq \tfrac{1}{2}$. In addition, we have the bound
%\[
%\|u_{1}\|_{H^{s_{1} + \kappa_{1} - \varepsilon_{1}}(\Omega_{1})} \leq C_{1}\varepsilon_{1}^{-\nu_{1}}\|f_{1}\|_{L^{2}(\Omega_{1})}, \qquad \|u_{2}\|_{H^{s_{2} + \kappa_{2} - \varepsilon_{2}}(\Omega_{2})} \leq C_{2}\varepsilon_{2}^{-\nu_{2}}\|f_{2}\|_{L^{2}(\Omega_{2})},
%\]
\[
\|u_{i}\|_{H^{s_{i} + \kappa_{i} - \varepsilon_{i}}(\Omega_{i})} \leq C_{i}\varepsilon_{i}^{-\nu_{i}}
\left(\|f_{1}\|_{L^{2}(\Omega_{1})}
+
\|f_{2}\|_{L^{2}(\Omega_{2})}
\right), \qquad \forall \varepsilon_{i} \in (0,s_{i}),
\]
where $\nu_{i} = \tfrac{1}{2}$ for $\tfrac{1}{2} < s_{i} < 1$ and $\nu_{i} = \tfrac{1}{2} + \nu_{0,i}$ for $0 < s_{i} \leq \tfrac{1}{2}$. Here, $\nu_{0,i}$ is a positive constant depending on $\Omega_i$ and $n$, and $C_{i}$ is a positive constant depending on $\Omega_{1}$, $\Omega_2$, $n$, $s_{1}$, $s_2$, and $J$.
\end{theorem}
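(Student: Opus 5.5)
The plan is to decouple the system and reduce each equation to a standard Dirichlet problem for the integral fractional Laplacian, for which Sobolev regularity estimates with explicit $\varepsilon$-dependence are known. These are the estimates of Borthagaray--Nochetto type: $\tilde H^{s+\kappa-\varepsilon}$-regularity with constant $\varepsilon^{-\nu}$ for right-hand sides in $H^{1/2-s}$ or $L^2$ on Lipschitz domains. We fix $i=1$; the case $i=2$ is symmetric.

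\emph{Step 1: rewrite the first equation.} Since $u_1=0$ outside $\Omega_1$, we have for $v_1\in\tilde H^{s_1}(\Omega_1)$
\[
\int_{\Omega_2^c}\int_{\Omega_2^c}\frac{(u_1(x)-u_1(y))(v_1(x)-v_1(y))}{|x-y|^{n+2s_1}}\,\mathrm dy\,\mathrm dx
=(u_1,v_1)_{\tilde H^{s_1}(\Omega_1)}-2\int_{\Omega_1}u_1 v_1\,h_2\,\mathrm dx ,
\]
where $h_2(x):=\int_{\Omega_2}|x-y|^{-n-2s_1}\mathrm dy$ satisfies $0\le h_2\le|\Omega_2|d^{-n-2s_1}$ on $\Omega_1$. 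This is the same computation used in the proof of Theorem~\ref{thm:existence_of_minimizer}. Substituting into \eqref{eq:first_EL_eq} shows that $u_1$ is the weak solution of
\[
\mathcal C_1(u_1,v_1)_{\tilde H^{s_1}(\Omega_1)}=\int_{\Omega_1}F_1v_1\,\mathrm dx\quad\forall v_1\in\tilde H^{s_1}(\Omega_1),
\qquad
F_1:=f_1+2\mathcal C_1h_2u_1-gu_1+J\star(u_2\chi_{\Omega_2}),
\]
with $g(x)=\int_{\Omega_2}J(x-y)\,\mathrm dy\in L^\infty(\Omega_1)$ as in Lemma~\ref{lemma:mathcalE_cont_convex}. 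Up to normalization, this is the homogeneous Dirichlet problem $(-\Delta)^{s_1}u_1=F_1$ in $\Omega_1$, $u_1=0$ in $\Omega_1^c$.

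\emph{Step 2: bound the right-hand side.} Using $h_2,g\in L^\infty(\Omega_1)$ and Lemma~\ref{lemma:cont_conv},
\[
\|F_1\|_{L^2(\Omega_1)}\lesssim\|f_1\|_{L^2(\Omega_1)}+\|u_1\|_{L^2(\Omega_1)}+\|J\|_{L^1(\mathbb R^n)}\|u_2\|_{L^2(\Omega_2)} .
\]
The Poincar\'e inequality \eqref{eq:Poincare} and the stability bound of Theorem~\ref{thm:stab_bound} then give $\|F_1\|_{L^2(\Omega_1)}\lesssim\|f_1\|_{L^2(\Omega_1)}+\|f_2\|_{L^2(\Omega_2)}$. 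This step is where the constant acquires its dependence on $J$, $\Omega_2$ and $s_2$.

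\emph{Step 3: apply the known regularity result.} For $F_1\in L^2(\Omega_1)\subset H^{1/2-s_1}(\Omega_1)$ we invoke the result on Lipschitz domains. When $s_1>1/2$ it gives $u_1\in\tilde H^{s_1+1/2-\varepsilon}$ with $\|u_1\|\lesssim\varepsilon^{-1/2}\|F_1\|_{H^{1/2-s_1}}$. When $s_1\le1/2$, the data are only in $L^2\subset H^{r}$ for $r<1/2$, so there is a loss. We would use the version giving $u_1\in H^{2s_1-2\varepsilon}$ with constant $\varepsilon^{-1/2-\nu_0}$, where $\nu_0$ comes from the embedding or interpolation constant; this matches $\kappa_1=s_1-\varepsilon_1$.

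\emph{Main obstacle.} The main difficulty is Step 3 in the case $s_i\le1/2$. One must cite or adapt the precise result with explicit $\varepsilon$-dependence and confirm that the exponents align with the statement's $s_i+\kappa_i-\varepsilon_i=2s_i-2\varepsilon_i$. If no direct citation is available, we would first interpolate $L^2(\Omega_1)\hookrightarrow H^{1/2-s_1-\varepsilon}$-type dual spaces to track the constant. The reduction in Steps 1--2 itself is routine.
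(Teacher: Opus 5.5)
Your proposal follows the paper's proof essentially step for step. The paper also rewrites \eqref{eq:first_EL_eq} as the Dirichlet problem $(-\Delta)^{s_1}u_1 = f_1 + 2\mathcal{C}_1 h u_1 - g u_1 + \vartheta$ in $\Omega_1$, with $h,g\in L^\infty(\Omega_1)$ and $\vartheta = J\star(u_2\chi_{\Omega_2})\in L^2(\Omega_1)$, bounds the right-hand side in $L^2(\Omega_1)$ by $\|f_1\|_{L^2(\Omega_1)}+\|f_2\|_{L^2(\Omega_2)}$ using Lemma~\ref{lemma:cont_conv} and the stability bound, and then applies \cite[Theorem 2.1]{MR4283703} directly. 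That cited result is stated for $L^2$ data and already gives exactly the $\kappa_i,\nu_i$ of the theorem, including $s_i\le\tfrac12$, so your ``main obstacle'' is settled by citation and no extra interpolation is needed. One correction: your remark ``$L^2\subset H^{r}$ for $r<1/2$'' is backwards, since $L^2\subset H^{r}$ holds only for $r\le 0$; this is precisely why the shift drops to $\kappa_i = s_i-\varepsilon_i$ in that case.
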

\begin{proof}
We provide a proof for $u_{1}$; the argument for $u_2$ is similar. From relation \eqref{eq:first_EL_eq}, and after some algebraic manipulations using the fact that $u_1$ and $v_1$ vanish outside $\Omega_1$, we conclude that $u_{1}$ is the weak solution of the following problem:
\begin{align}\label{eq:strong_prolem_u1}
u_1 \in \tilde{H}^{s_{1}}(\Omega_{1}):
\quad
(-\Delta)^{s_{1}}u_{1} = f_1 + 2\mathcal{C}_1 u_1 h - u_1 g + \vartheta =: \mathfrak{g}
% u_{1}(2\mathcal{C}_1 h - g) + \vartheta + f_{1}
\text{ in } \Omega_{1},
\end{align}
supplemented with the volumetric boundary condition $u_{1} = 0$ in $\Omega_{1}^{c}$. Here, the functions $h, g, \vartheta: \Omega_1 \rightarrow \mathbb{R}$ are defined as follows:
\begin{equation*}
h(x) = \int_{\Omega_{2}}|x-y|^{-n-2s_{1}}\mathrm{d}y, \quad g(x) = \int_{\Omega_{2}}J(x-y)\mathrm{d}y, \quad \vartheta(x) = \int_{\Omega_{2}}J(x-y)u_{2}(y) \mathrm{d}y.
\end{equation*}
Since $\mathrm{dist}(\Omega_1,\Omega_2) = d > 0$, it is immediate that the function $h$ belongs to $L^{\infty}(\Omega_1)$. On the other hand, since $J \in L^1(\mathbb{R}^n)$, the function $g$ belongs to  $L^{\infty}(\Omega_1)$. Since $h, g \in L^{\infty}(\Omega_1)$ and  $u_1 \in \tilde{H}^{s_1}(\Omega_1) \subset L^2(\Omega_1)$, we deduce that $u_{1}(2\mathcal{C}_1h - g)$ belongs to $L^{2}(\Omega_{1})$. Applying the continuity bound from Lemma \ref{lemma:cont_conv} shows that $\vartheta$ belongs to $L^{2}(\Omega_{1})$. Combining these results, we conclude that the forcing term $\mathfrak{g} \in L^{2}(\Omega_{1})$. Moreover, the stability bound \eqref{eq:stability_estimate_continuous}, the fact that $h,g \in L^{\infty}(\Omega_1)$, and Lemma \ref{lemma:cont_conv} yield $\|\mathfrak{g}\|_{L^2(\Omega_1)} \lesssim \|f_{1}\|_{L^{2}(\Omega_{1})} + \|f_{2}\|_{L^{2}(\Omega_{2})}$. The desired regularity then follows from a direct application of \cite[Theorem 2.1]{MR4283703}. This concludes the proof.
\end{proof}

\section{A finite element discretization}
\label{sec:fem}
Under the additional assumption that $\Omega_1$ and $\Omega_2$ are both Lipschitz polytopes, we introduce a finite element solution technique to approximate the solution of the coupled system \eqref{eq:first_EL_eq}--\eqref{eq:second_EL_eq}. We begin with some terminology and describe the construction of the underlying finite element spaces. For this purpose, we first introduce the families
\[
 \{ \T_{1,h} \}_{h>0},
 \qquad
 \{ \T_{2,\mathfrak{h}} \}_{\mathfrak{h}>0},
\]
of conforming and quasi-uniform meshes of $\bar{\Omega}_1$ and $\bar{\Omega}_2$, respectively, made of closed simplices $T$. Here, 
$
h:= \max \{ h_T : T \in \T_{1,h} \}
$
and
$
\mathfrak{h}:= \max \{ h_T : T \in \T_{2,\mathfrak{h}} \}
$
denote the mesh sizes of $\T_{1,h}$ and $\T_{2,\mathfrak{h}}$, respectively, and $h_T = \mathrm{diam}(T)$.

Given a mesh $\T_{1,h}$ and a mesh $\T_{2,\mathfrak{h}}$, we introduce the finite element spaces
\begin{align}
\mathbb{V}_{1,h} & := \{v_{h}\in C(\bar{\Omega}_1): v_{h}|_T \in \mathbb{P}_{1}(T) \ \forall T\in \T_{1,h}, v_h = 0 \text{ on } \partial\Omega_1\},
\\
\mathbb{V}_{2,\mathfrak{h}} & := \{v_{\mathfrak{h}}\in C(\bar{\Omega}_2): v_{\mathfrak{h}}|_T \in \mathbb{P}_{1}(T) \ \forall T\in \T_{2,\mathfrak{h}}, v_{\mathfrak{h}} = 0 \text{ on } \partial\Omega_2\}.
\end{align}

\begin{remark}[homogeneous Dirichlet boundary conditions]
 We note that we enforce a classical homogeneous Dirichlet boundary condition on $\partial \Omega_1$ ($\partial \Omega_2$) and that discrete functions in $\mathbb{V}_{1,h}$ ($\mathbb{V}_{2,\mathfrak{h}}$) can be trivially extended to $\Omega_1^c$ ($\Omega_2^c$) by zero.
 \label{rem:remark_Dirichlet}
\end{remark}

In view of the comments in Remark \ref{rem:remark_Dirichlet}, $\mathbb{V}_{1,h} \subset \tilde{H}^{s_1}(\Omega_1)$ and $\mathbb{V}_{2,\mathfrak{h}} \subset \tilde{H}^{s_2}(\Omega_2)$ for every $s_1,s_2 \in (0, 1)$.

\subsection{The discrete scheme}
We propose the following finite element approximation of the system \eqref{eq:first_EL_eq}--\eqref{eq:second_EL_eq}: Find $u_{1,h} \in \mathbb{V}_{1,h}$ and $u_{2,\mathfrak{h}} \in \mathbb{V}_{2,\mathfrak{h}}$ such that
\begin{align}\label{eq:first_EL_eq_discrete}
\begin{split}
& \mathcal{C}_{1}\int_{\Omega_{2}^{c}}\int_{\Omega_{2}^{c}}\dfrac{( u_{1,h}(x) - u_{1,h}(y))(v_{1,h}(x) - v_{1,h}(y))}{|x - y|^{n + 2s_{1}}}\mathrm{d}y\mathrm{d}x \\
& \quad + \int_{\Omega_{1}}\int_{\Omega_{2}}J(x - y)(u_{1,h}(x) - u_{2,\mathfrak{h}}(y))v_{1,h}(x)\mathrm{d}y\mathrm{d}x = \int_{\Omega_{1}}f_{1}(x)v_{1,h}(x) \mathrm{d}x
\end{split}
\end{align}
for all $v_{1,h} \in \mathbb{V}_{1,h}$, and
\begin{align}\label{eq:second_EL_eq_discrete}
\begin{split}
& \mathcal{C}_{2}\int_{\Omega_{1}^{c}}\int_{\Omega_{1}^{c}}\dfrac{(u_{2,\mathfrak{h}}(x) - u_{2,\mathfrak{h}}(y))(v_{2,\mathfrak{h}}(x) - v_{2,\mathfrak{h}}(y))}{|x - y|^{n + 2s_{2}}}\mathrm{d}y\mathrm{d}x \\
& \quad + \int_{\Omega_{1}}\int_{\Omega_{2}}J(x - y)(u_{2,\mathfrak{h}}(y) - u_{1,h}(x))v_{2,\mathfrak{h}}(y)\mathrm{d}y\mathrm{d}x = \int_{\Omega_{2}}f_{2}(x)v_{2,\mathfrak{h}}(x) \mathrm{d}x
\end{split}
\end{align}
for all $v_{2,\mathfrak{h}} \in \mathbb{V}_{2,\mathfrak{h}}$.

Define $\mathbb{H}_{h,\mathfrak{h}}:= \mathbb{V}_{1,h} \oplus \mathbb{V}_{2,\mathfrak{h}}$. If we add the discrete equations \eqref{eq:first_EL_eq_discrete} and \eqref{eq:second_EL_eq_discrete} and use the structure of the discrete space $\mathbb{H}_{h,\mathfrak{h}}$, we obtain the following equivalent weak formulation: Find $u_{h,\mathfrak{h}} = u_{1,h} + u_{2,\mathfrak{h}} \in \mathbb{H}_{h,\mathfrak{h}}$ such that
\begin{equation}
\quad
\mathcal{A}(u_{h,\mathfrak{h}}, v_{h,\mathfrak{h}}) = \int_{\Omega_{1}}f_{1}(x)v_{1,h}(x) \mathrm{d}x + \int_{\Omega_{2}}f_{2}(x)v_{2,\mathfrak{h}}(x) \mathrm{d}x
\label{eq:discrete_problem}
\end{equation}
for all  $v_{h,\mathfrak{h}} = v_{1,h} + v_{2,\mathfrak{h}}\in \mathbb{H}_{h,\mathfrak{h}}$.

We have the following result for the discrete problem.

\begin{theorem}[existence and uniqueness]
The discrete problem \eqref{eq:discrete_problem}, or equivalently the discrete coupled system \eqref{eq:first_EL_eq_discrete} and \eqref{eq:second_EL_eq_discrete}, admits a unique solution $u_{h,\mathfrak{h}} = u_{1,h} + u_{2,\mathfrak{h}} \in \mathbb{H}_{h,\mathfrak{h}} = \mathbb{V}_{1,h} \oplus \mathbb{V}_{2,\mathfrak{h}}$. In addition, we have the following stability bound:
\begin{equation}
|u_{1,h}|_{\tilde{H}^{s_1}(\Omega_1)}
 +
|u_{2,\mathfrak{h}}|_{\tilde{H}^{s_2}(\Omega_2)}
\lesssim
\| f_1 \|_{L^2(\Omega_1)}
+
\| f_2 \|_{L^2(\Omega_2)},
\label{eq:stability_estimate_discrete}
 \end{equation}
where the hidden constant is independent of $u_{1,h}$, $u_{2,\mathfrak{h}}$, $f_1$, $f_2$, and the discretization parameters $h$ and $\mathfrak{h}$.
\end{theorem}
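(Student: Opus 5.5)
The discrete problem \eqref{eq:discrete_problem} is a conforming Galerkin approximation of \eqref{eq:euler_lagrange_eq}. The plan is to apply the Lax--Milgram lemma on the finite-dimensional subspace $\mathbb{H}_{h,\mathfrak{h}} \subset H$ and then test with the solution itself.

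First, I check conformity. By Remark \ref{rem:remark_Dirichlet}, functions in $\mathbb{V}_{1,h}$ and $\mathbb{V}_{2,\mathfrak{h}}$ extended by zero lie in $\tilde{H}^{s_1}(\Omega_1)$ and $\tilde{H}^{s_2}(\Omega_2)$, respectively. Hence $\mathbb{H}_{h,\mathfrak{h}} \subset H$ is a closed (finite-dimensional) subspace. Lemma \ref{lemma:prop_A} gives the continuity constant $M$ and coercivity constant $\alpha$ of $\mathcal{A}$ on all of $H$, so both properties hold on $\mathbb{H}_{h,\mathfrak{h}}$ with the same constants, independent of $h$ and $\mathfrak{h}$. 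The right-hand side $v \mapsto (f_1,v_{1,h})_{L^2(\Omega_1)} + (f_2,v_{2,\mathfrak{h}})_{L^2(\Omega_2)}$ is a bounded linear functional on $\mathbb{H}_{h,\mathfrak{h}}$: this follows from the Cauchy--Schwarz inequality and the Poincar\'e inequality \eqref{eq:Poincare}, whose constant depends only on $\Omega_i$, $n$, and $s_i$.

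Lax--Milgram then yields a unique solution $u_{h,\mathfrak{h}}$. Alternatively, since the space is finite dimensional, coercivity makes the stiffness matrix injective and hence invertible. The equivalence with the coupled system \eqref{eq:first_EL_eq_discrete}--\eqref{eq:second_EL_eq_discrete} follows exactly as in the continuous remark. Testing \eqref{eq:discrete_problem} with $v_{1,h}\in\mathbb{V}_{1,h}$, $v_{2,\mathfrak{h}}=0$ and vice versa recovers the two equations, and adding the two equations recovers \eqref{eq:discrete_problem}, because $\mathbb{H}_{h,\mathfrak{h}}$ is a direct sum.

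For the stability bound, I set $v_{h,\mathfrak{h}} = u_{h,\mathfrak{h}}$. Coercivity \eqref{eq:A_is_coercive}, Cauchy--Schwarz, and Poincar\'e give
\[
\alpha |u_{h,\mathfrak{h}}|_H^2 \leq C\left(\|f_1\|_{L^2(\Omega_1)} + \|f_2\|_{L^2(\Omega_2)}\right)|u_{h,\mathfrak{h}}|_H .
\]
Dividing by $|u_{h,\mathfrak{h}}|_H$ and using $a+b \le \sqrt{2}(a^2+b^2)^{1/2}$ yields \eqref{eq:stability_estimate_discrete}. There is no real obstacle. The only point needing care is that every constant ($\alpha$ and the Poincar\'e constants) comes from the continuous setting, which guarantees the bound is uniform in $h$ and $\mathfrak{h}$.
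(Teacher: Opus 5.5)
Your proposal is correct and follows the paper's proof. Existence and uniqueness come from Lax--Milgram on the conforming subspace $\mathbb{H}_{h,\mathfrak{h}} \subset H$ via Lemma~\ref{lemma:prop_A}, and the stability bound comes from testing with $u_{h,\mathfrak{h}}$ and combining the coercivity \eqref{eq:A_is_coercive} with Cauchy--Schwarz and the Poincar\'e inequality \eqref{eq:Poincare}; your remarks on the uniformity of the constants in $h$ and $\mathfrak{h}$ and on the equivalence with the coupled system only make explicit what the paper leaves implicit.
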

\begin{proof}
Since $\mathcal{A}$ is bilinear, continuous, and coercive on $H \times H$ (see Lemma \ref{lemma:prop_A}), and since the finite--dimensional space $\mathbb{H}_{h,\mathfrak{h}} \subset H$, the existence and uniqueness of a solution $u_{h,\mathfrak{h}}$ follow directly from the Lax--Milgram lemma. The desired stability estimate \eqref{eq:stability_estimate_discrete} is obtained by substituting $v_{1,h} = u_{1,h}$ and $v_{2,\mathfrak{h}} = u_{2,\mathfrak{h}}$ into \eqref{eq:discrete_problem} and using the coercivity of the bilinear form $\mathcal{A}$ from \eqref{eq:A_is_coercive} as well as the Poincar\'e inequality \eqref{eq:Poincare}. This concludes the proof.
\end{proof}

\subsection{A priori error bounds}\label{sec:error}

We begin this subsection with the following result.

\begin{lemma}[Galerkin orthogonality]
Let $u \in H$ and let $u_{h,\mathfrak{h}} \in \mathbb{H}_{h,\mathfrak{h}}$ be the solutions of problems \eqref{eq:euler_lagrange_eq} and \eqref{eq:discrete_problem}, respectively. Then, 
\begin{equation}\label{eq:Galerkin_orthog}
\mathcal{A}(u - u_{h,\mathfrak{h}}, v_{h,\mathfrak{h}}) = 0 \qquad \forall v_{h,\mathfrak{h}} \in \mathbb{H}_{h,\mathfrak{h}}.
\end{equation}
\end{lemma}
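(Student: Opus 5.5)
The argument is the standard conformity argument: the discrete space sits inside the continuous one, so the continuous weak formulation can be tested with discrete functions and subtracted from the discrete problem.

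\emph{Step 1 (conformity).} By Remark~\ref{rem:remark_Dirichlet}, every $v_{1,h} \in \mathbb{V}_{1,h}$ extended by zero to $\Omega_1^c$ belongs to $\tilde{H}^{s_1}(\Omega_1)$. Likewise, every $v_{2,\mathfrak{h}} \in \mathbb{V}_{2,\mathfrak{h}}$ extended by zero belongs to $\tilde{H}^{s_2}(\Omega_2)$. Hence $\mathbb{H}_{h,\mathfrak{h}} = \mathbb{V}_{1,h} \oplus \mathbb{V}_{2,\mathfrak{h}} \subset H$.

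\emph{Step 2 (test the continuous problem).} Fix an arbitrary $v_{h,\mathfrak{h}} = v_{1,h} + v_{2,\mathfrak{h}} \in \mathbb{H}_{h,\mathfrak{h}}$. Since $v_{h,\mathfrak{h}} \in H$, it is an admissible test function in \eqref{eq:euler_lagrange_eq}, which gives
\[
\mathcal{A}(u, v_{h,\mathfrak{h}}) = \int_{\Omega_1} f_1 v_{1,h}\,\mathrm{d}x + \int_{\Omega_2} f_2 v_{2,\mathfrak{h}}\,\mathrm{d}x .
\]
The discrete problem \eqref{eq:discrete_problem}, tested with the same function, has the identical right-hand side.

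\emph{Step 3 (subtract).} Subtracting the two identities and using the bilinearity of $\mathcal{A}$ from Lemma~\ref{lemma:prop_A} yields $\mathcal{A}(u - u_{h,\mathfrak{h}}, v_{h,\mathfrak{h}}) = 0$, which is \eqref{eq:Galerkin_orthog}. Because $\mathcal{A}$ is continuous on $H \times H$ and both $u$ and $u_{h,\mathfrak{h}}$ lie in $H$, every term is finite and the subtraction is legitimate.

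The only point needing any care is Step 1: the zero extension of a continuous piecewise linear function vanishing on $\partial\Omega_i$ must lie in $H^{s_i}(\mathbb{R}^n)$. This holds because such a function is Lipschitz on $\mathbb{R}^n$ with compact support, so it belongs to $H^1(\mathbb{R}^n) \subset H^{s_i}(\mathbb{R}^n)$. The paper already records this inclusion right after Remark~\ref{rem:remark_Dirichlet}, so I expect no real obstacle.
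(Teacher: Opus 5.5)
Your proposal is correct and follows the same argument as the paper: use $\mathbb{H}_{h,\mathfrak{h}} \subset H$ to test \eqref{eq:euler_lagrange_eq} with discrete functions, then subtract \eqref{eq:discrete_problem} and use bilinearity. The extra justification of conformity you give (a compactly supported Lipschitz zero extension lies in $H^1(\mathbb{R}^n) \subset H^{s_i}(\mathbb{R}^n)$) is sound, though the paper simply takes this inclusion as given.
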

\begin{proof}
Since $\mathbb{H}_{h,\mathfrak{h}} \subset H$, we are allowed to set $v_{h,\mathfrak{h}} = v_{1,h} + v_{2,\mathfrak{h}} \in \mathbb{H}_{h,\mathfrak{h}}$ as a test function in problem \eqref{eq:euler_lagrange_eq}. We obtain
\[
 \mathcal{A}(u,v_{h,\mathfrak{h}}) = \int_{\Omega_{1}}f_{1}(x)v_{1,h}(x) \mathrm{d}x + \int_{\Omega_{2}}f_{2}(x)v_{2,\mathfrak{h}}(x) \mathrm{d}x \quad \forall v_{h,\mathfrak{h}} \in \mathbb{H}_{h,\mathfrak{h}}.
\]
If we subtract the discrete equation \eqref{eq:discrete_problem} from the previous relation, we obtain the desired Galerkin orthogonality property.
\end{proof}

We now derive the following quasi-best approximation result.

\begin{lemma}[Cea's lemma]
Let $u = u_{1} + u_{2} \in H$ and let $u_{h,\mathfrak{h}} = u_{1,h} + u_{2,\mathfrak{h}} \in \mathbb{H}_{h,\mathfrak{h}}$ be the solutions of problems \eqref{eq:euler_lagrange_eq} and \eqref{eq:discrete_problem}, respectively. Then,
%\begin{equation}\label{eq:Ceas_lemma}
%|u - u_{h,\mathfrak{h}}|_{H} \lesssim \min_{v_{h,\mathfrak{h}} \in \mathbb{H}_{h,\mathfrak{h}}} |u - v_{h,\mathfrak{h}}|_{H}.
%\end{equation}
\begin{align}\label{eq:Ceas_lemma}
\begin{split}
& |u_{1} - u_{1,h}|^{2}_{\tilde{H}^{s_{1}}(\Omega_{1})} + |u_{2} - u_{2,\mathfrak{h}}|_{\tilde{H}^{s_{2}}(\Omega_{2})}^{2} 
\\
& \lesssim \min_{v_{h,\mathfrak{h}} \in \mathbb{H}_{h,\mathfrak{h}}}
\left( 
|u_{1} - v_{1,h}|_{\tilde{H}^{s_{1}}(\Omega_{1})}^{2} + |u_{2} - v_{2,\mathfrak{h}}|_{\tilde{H}^{s_{2}}(\Omega_{2})}^{2}
\right).
\end{split}
\end{align}
\end{lemma}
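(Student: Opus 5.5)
The argument is the standard C\'ea argument, built on the coercivity and continuity of $\mathcal{A}$ from Lemma \ref{lemma:prop_A} and the Galerkin orthogonality \eqref{eq:Galerkin_orthog}. The left-hand side of \eqref{eq:Ceas_lemma} is exactly $|u - u_{h,\mathfrak{h}}|_H^2$ by the definition \eqref{eq:originalNorm}. The reason is that $u - u_{h,\mathfrak{h}} = (u_1 - u_{1,h}) + (u_2 - u_{2,\mathfrak{h}})$, with $u_1 - u_{1,h} \in \tilde{H}^{s_1}(\Omega_1)$ and $u_2 - u_{2,\mathfrak{h}} \in \tilde{H}^{s_2}(\Omega_2)$, since $\mathbb{V}_{1,h}\subset \tilde H^{s_1}(\Omega_1)$ and $\mathbb{V}_{2,\mathfrak{h}}\subset\tilde H^{s_2}(\Omega_2)$. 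The right-hand side is the minimum of $|u - v_{h,\mathfrak{h}}|_H^2$ over $\mathbb{H}_{h,\mathfrak{h}}$. It therefore suffices to prove $|u - u_{h,\mathfrak{h}}|_H \le (M/\alpha)\, |u - v_{h,\mathfrak{h}}|_H$ for every $v_{h,\mathfrak{h}} \in \mathbb{H}_{h,\mathfrak{h}}$, square it, and take the minimum.

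Fix an arbitrary $v_{h,\mathfrak{h}} = v_{1,h} + v_{2,\mathfrak{h}} \in \mathbb{H}_{h,\mathfrak{h}}$. By coercivity \eqref{eq:A_is_coercive},
\[
\alpha |u - u_{h,\mathfrak{h}}|_H^2 \le \mathcal{A}(u - u_{h,\mathfrak{h}}, u - u_{h,\mathfrak{h}}).
\]
Write $u - u_{h,\mathfrak{h}} = (u - v_{h,\mathfrak{h}}) + (v_{h,\mathfrak{h}} - u_{h,\mathfrak{h}})$ in the second argument. Since $v_{h,\mathfrak{h}} - u_{h,\mathfrak{h}} \in \mathbb{H}_{h,\mathfrak{h}}$, Galerkin orthogonality \eqref{eq:Galerkin_orthog} kills the second piece, leaving $\mathcal{A}(u - u_{h,\mathfrak{h}}, u - v_{h,\mathfrak{h}})$. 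Continuity from Lemma \ref{lemma:prop_A} bounds this by $M |u - u_{h,\mathfrak{h}}|_H |u - v_{h,\mathfrak{h}}|_H$. If $|u-u_{h,\mathfrak{h}}|_H = 0$ there is nothing to prove; otherwise divide by it to get $|u - u_{h,\mathfrak{h}}|_H \le (M/\alpha)|u - v_{h,\mathfrak{h}}|_H$.

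Squaring gives \eqref{eq:Ceas_lemma} with hidden constant $(M/\alpha)^2$, which depends only on $n$, $\Omega_1$, $\Omega_2$, $s_1$, $s_2$, and $J$, and not on $h$ or $\mathfrak{h}$. The minimum is attained because $\mathbb{H}_{h,\mathfrak{h}}$ is finite dimensional, so the infimum of the continuous function $v_{h,\mathfrak{h}} \mapsto |u - v_{h,\mathfrak{h}}|_H$ is achieved. No step is a genuine obstacle. The only point needing care is checking that the componentwise split of $u - v_{h,\mathfrak{h}}$ lies in the correct spaces, so that the $H$-norm decomposes exactly into the two seminorms displayed in the statement. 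This follows from the direct-sum structure of $H$ together with the zero-extension observation in Remark \ref{rem:remark_Dirichlet}.
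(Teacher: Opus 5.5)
Your proposal is correct and follows the paper's proof: coercivity of $\mathcal{A}$, Galerkin orthogonality to replace $u - u_{h,\mathfrak{h}}$ by $u - v_{h,\mathfrak{h}}$ in the second argument, continuity, and then taking the minimum over $\mathbb{H}_{h,\mathfrak{h}}$, which gives the hidden constant $(M/\alpha)^2$. One small nit: continuity on a finite-dimensional space does not by itself guarantee that the minimum is attained; you also need that $v_{h,\mathfrak{h}} \mapsto |u - v_{h,\mathfrak{h}}|_H$ tends to infinity as $|v_{h,\mathfrak{h}}|_H \to \infty$, or you can appeal to best approximation onto a closed subspace of a Hilbert space.
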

\begin{proof}
The proof is standard. We provide a brief proof for completeness. Using the coercivity and continuity of the bilinear form $\mathcal{A}$ established in Lemma \ref{lemma:prop_A} and the Galerkin orthogonality property \eqref{eq:Galerkin_orthog}, it follows that
\begin{align*}
\alpha|u - u_{h,\mathfrak{h}}|_{H}^{2} & \leq \mathcal{A}(u - u_{h,\mathfrak{h}},u - u_{h,\mathfrak{h}}) \\
& = \mathcal{A}(u - u_{h,\mathfrak{h}},u - v_{h,\mathfrak{h}}) \leq M |u - u_{h,\mathfrak{h}}|_{H}|u - v_{h,\mathfrak{h}}|_{H} \qquad \forall v_{h,\mathfrak{h}} \in \mathbb{H}_{h,\mathfrak{h}}.
\end{align*}
Taking the minimum over $v_{h,\mathfrak{h}} \in \mathbb{H}_{h,\mathfrak{h}}$ yields the desired estimate \eqref{eq:Ceas_lemma}.
\end{proof}

We are now in a position to state and prove the following a priori error bound.

\begin{theorem}[a priori error bound]\label{thm:apriori_error}
Let $s_{1},s_{2} \in (0,1)$, let $f_{1} \in L^{2}(\Omega_{1})$, and let $f_{2} \in L^{2}(\Omega_{2})$. Let $u = u_{1} + u_{2} \in H$ and let $u_{h,\mathfrak{h}} = u_{1,h} + u_{2,\mathfrak{h}} \in \mathbb{H}_{h,\mathfrak{h}}$ be the solutions of problems \eqref{eq:euler_lagrange_eq} and \eqref{eq:discrete_problem}, respectively. Then, we have the following error bound:
\begin{multline}\label{eq:apriori_est}
|u_{1} - u_{1,h}|_{\tilde{H}^{s_{1}}(\Omega_{1})}^{2} + |u_{2} - u_{2,\mathfrak{h}}|_{\tilde{H}^{s_{2}}(\Omega_{2})}^{2}
\\
\lesssim 
\left( h^{2\gamma_{1}}|\log h|^{2\varphi_{1}} + \mathfrak{h}^{2\gamma_{2}}|\log \mathfrak{h}|^{2\varphi_{2}} \right)
\left( \|f_{1}\|_{L^{2}(\Omega_{1})}^{2}
+
\|f_{2}\|_{L^{2}(\Omega_{2})}^{2} \right).
\end{multline}
Here, for $i \in \{1,2\}$, $\gamma_{i} = \min\{s_{i},\tfrac{1}{2}\}$, $\varphi_{i} = \nu_{i}$ if $s_{i} \neq \tfrac{1}{2}$, $\varphi_{i} = 1 + \nu_{i}$ if $s_{i} = \tfrac{1}{2}$, and $\nu_{i} \geq \tfrac{1}{2}$ is the constant in Theorem \ref{thm:Sob_reg}.
\end{theorem}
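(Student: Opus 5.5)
We start from Cea's lemma \eqref{eq:Ceas_lemma}. Because $\mathbb{H}_{h,\mathfrak{h}} = \mathbb{V}_{1,h}\oplus\mathbb{V}_{2,\mathfrak{h}}$ and the right-hand side of \eqref{eq:Ceas_lemma} is a sum of two terms, each depending on only one component, the minimum splits into two independent best-approximation problems. The first is $\min_{v_{1,h}\in\mathbb{V}_{1,h}}|u_1-v_{1,h}|_{\tilde H^{s_1}(\Omega_1)}$ and the second is the analogous one for $u_2$ in $\mathbb{V}_{2,\mathfrak{h}}$. It therefore suffices to show, for each $i$ and with $h_i\in\{h,\mathfrak{h}\}$ the corresponding mesh size,
\[
\min_{v\in\mathbb{V}_{i,h_i}}|u_i-v|_{\tilde H^{s_i}(\Omega_i)}\lesssim h_i^{\gamma_i}|\log h_i|^{\varphi_i}\left(\|f_1\|_{L^2(\Omega_1)}+\|f_2\|_{L^2(\Omega_2)}\right).
\]
Squaring and summing this estimate over $i$ then gives \eqref{eq:apriori_est}.

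For each component we take $v=\Pi_{h_i}u_i$, where $\Pi_{h_i}$ is a Scott--Zhang type quasi-interpolant that preserves homogeneous boundary values. We then use the standard fractional approximation estimate on quasi-uniform meshes: for $s\in(0,1)$ and $s<t$, with $t-s\le 1$ and $t\le 2$ so that piecewise linears suffice,
\[
|w-\Pi_h w|_{\tilde H^{s}(\Omega)}\lesssim h^{t-s}\|w\|_{H^{t}(\Omega)}
\]
for all $w\in\tilde H^{s}(\Omega)\cap H^t(\Omega)$. This estimate is available in the literature, e.g.\ in the work of Acosta--Borthagaray and Borthagaray--Nochetto. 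We apply it with $t=s_i+\kappa_i-\varepsilon_i$ and use the regularity bound of Theorem \ref{thm:Sob_reg}. This yields
\[
|u_i-\Pi_{h_i}u_i|_{\tilde H^{s_i}(\Omega_i)}\lesssim h_i^{\kappa_i-\varepsilon_i}\varepsilon_i^{-\nu_i}\left(\|f_1\|_{L^2(\Omega_1)}+\|f_2\|_{L^2(\Omega_2)}\right).
\]

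The last step is to optimize in $\varepsilon_i$, and this is where the cases appear.

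\emph{Case $s_i>\tfrac12$.} Here $\kappa_i=\tfrac12=\gamma_i$, so the bound reads $h_i^{1/2}\,h_i^{-\varepsilon_i}\varepsilon_i^{-\nu_i}$. Choosing $\varepsilon_i=|\log h_i|^{-1}$ makes $h_i^{-\varepsilon_i}=e$ and produces the factor $|\log h_i|^{\nu_i}$, which is the claim with $\varphi_i=\nu_i$.

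\emph{Case $s_i<\tfrac12$.} Here $\kappa_i=s_i-\varepsilon_i$, so the rate becomes $h_i^{s_i-2\varepsilon_i}$. The same choice $\varepsilon_i\sim|\log h_i|^{-1}$ gives $h_i^{s_i}|\log h_i|^{\nu_i}$, again with $\gamma_i=s_i$.

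\emph{Case $s_i=\tfrac12$.} Because of how $\kappa_i$ and $\nu_i$ are defined at the endpoint $s_i=\tfrac12$, the argument loses an extra logarithm, giving $\varphi_i=1+\nu_i$. The likely source is a borderline constant in the interpolation estimate (the $\tilde H^{1/2}$ versus $H^{1/2}_{00}$ issue) that blows up like $\varepsilon^{-1}$. I expect to handle this by citing the corresponding estimate in \cite{MR4283703} or in Borthagaray--Nochetto, again with $\varepsilon_i\sim|\log h_i|^{-1}$.

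The main obstacle is the approximation step. The solution $u_i$ lies only in $H^{t}(\Omega_i)$ and is extended by zero, so we must make sure the interpolation error is measured in the global $\tilde H^{s_i}$ seminorm, including the interaction with $\Omega_i^c$. We also need the constant in the interpolation estimate to stay uniform as $t$ approaches $s_i+\tfrac12$; otherwise the logarithmic powers are not controlled. This is where the precise form of the cited interpolation estimate, including its behaviour near $s=\tfrac12$, will be needed.
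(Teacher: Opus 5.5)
Your proposal follows the paper's proof: C\'ea's lemma, the choice $v_{h,\mathfrak{h}}=\Pi_h u_1+\Pi_{\mathfrak{h}}u_2$ with Scott--Zhang/Cl\'ement-type quasi-interpolants, the regularity of Theorem~\ref{thm:Sob_reg}, and the interpolation and $\varepsilon$-balancing argument of \cite[Theorem 3.5]{MR4283703}, which you make explicit by choosing $\varepsilon_i\sim|\log h_i|^{-1}$. One correction to your case analysis: Theorem~\ref{thm:Sob_reg} still gives $\kappa_i=s_i-\varepsilon_i$ at $s_i=\tfrac12$, so your $s_i<\tfrac12$ computation applies there verbatim and gives $h_i^{1/2}|\log h_i|^{\nu_i}$, which for small $h_i$ already implies the stated bound with $\varphi_i=1+\nu_i$, and you do not need a separate borderline mechanism (your attribution of the extra logarithm to the definitions of $\kappa_i,\nu_i$ is not accurate).
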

\begin{proof}
 With the quasi-best approximation estimate \eqref{eq:Ceas_lemma} in hand, we can consider $v_{h,\mathfrak{h}} = \Pi_h u_1 + \Pi_{\mathfrak{h}} u_2$, where $\Pi_h$ and $\Pi_{\mathfrak{h}}$ denote appropriate quasi-interpolation operators (possible choices include the Scott--Zhang and Cl\'ement interpolation operators), and obtain the bound
 \[
|u_{1} - u_{1,h}|^{2}_{\tilde{H}^{s_{1}}(\Omega_{1})} + |u_{2} - u_{2,\mathfrak{h}}|_{\tilde{H}^{s_{2}}(\Omega_{2})}^{2} 
\lesssim 
|u_{1} - \Pi_h u_1 |_{\tilde{H}^{s_{1}}(\Omega_{1})}^{2} + |u_{2} - \Pi_{\mathfrak{h}} u_2|_{\tilde{H}^{s_{2}}(\Omega_{2})}^{2}.
 \]
The desired estimate follows from the arguments given in the proof of \cite[Theorem 3.5]{MR4283703}, combined with the regularity results of Theorem \ref{thm:Sob_reg}. This concludes the proof.
\end{proof}

\section{Alternating schemes}\label{sec:altern_schemes}
Inspired by the method proposed by Schwarz in \cite{schwarz1870ueber}, we propose and analyze the continuous alternating \textbf{Algorithm} \ref{Algorithm_cont} in this section and prove that it converges to the solution of the coupled problem \eqref{eq:first_EL_eq}--\eqref{eq:second_EL_eq}. Although it is already established that the coupled problem \eqref{eq:first_EL_eq}--\eqref{eq:second_EL_eq} has a unique solution (see Theorem \ref{thm:existence_of_minimizer}), the analysis of this algorithm provides the basis for proposing and analyzing the fully discrete alternating \textbf{Algorithm} \ref{Algorithm_disc}, an iterative algorithm that converges to the solution of the finite element approximation \eqref{eq:first_EL_eq_discrete}--\eqref{eq:second_EL_eq_discrete}.

\subsection{The continuous alternating scheme}
\label{sec:continuous_alternating_scheme}

We begin this section by introducing the continuous alternating scheme mentioned above (\textbf{Algorithm} \ref{Algorithm_cont}).

%%%%%%%%%%%%%%%%%%%%%%%%%%%%%%%%%%%%%%%%%%%%%%%%%%%%%%%
%%%%%%%%%%%%%%%%%  AlGORITHM 1    %%%%%%%%%%%%%%%%%%%%%
%%%%%%%%%%%%%%%%%%%%%%%%%%%%%%%%%%%%%%%%%%%%%%%%%%%%%%%
\begin{algorithm}[ht]
\caption{\textbf{The continuous alternating scheme}}
\label{Algorithm_cont}
\textbf{Input:} $\Omega_1, \Omega_2 \subset \mathbb{R}^n$, $s_{1},s_{2} \in (0,1)$, $f_{1} \in L^{2}(\Omega_{1})$, $f_{2} \in L^{2}(\Omega_{2})$, $J \in L^{1}(\mathbb{R}^{n})$, and $u_{2}^{0} \in \tilde{H}^{s_2} (\Omega_2)$.
\\
%\EO{\textbf{Step $0$:} }
\textbf{Step $\boldsymbol{0}$}: Define $u^0 = 0 + u_2^{0} \in H$.
\\
\textbf{For} $i=1$ \textbf{until convergence do}
\begin{itemize}
\item[$\boldsymbol{1}$:] Find the solution $u_{1}^{2i-1} \in \tilde{H}^{s_1}(\Omega_1)$ of \eqref{eq:first_EL_eq} with $u_{2}$ replaced by $u_{2}^{2i-2}$.
\item[$\boldsymbol{2}$:] Define $u_2^{2i-1} = u_2^{2i-2} \in \tilde{H}^{s_2}(\Omega_2)$ and $u^{2i-1} = u_{1}^{2i-1} + u_{2}^{2i-1} \in H$.
\item[$\boldsymbol{3}$:] Find the solution $u_{2}^{2i} \in \tilde{H}^{s_2}(\Omega_2)$ of \eqref{eq:second_EL_eq} with $u_{1}$ replaced by $u_{1}^{2i-1}$.
\item[$\boldsymbol{4}$:] Define $u_1^{2i} = u_1^{2i-1} \in \tilde{H}^{s_1}(\Omega_1)$ and $u^{2i} = u_{1}^{2i} + u_{2}^{2i} \in H$.
\end{itemize}
\end{algorithm}

\subsubsection{Solution operators}
To analyze \textbf{Algorithm} \ref{Algorithm_cont}, we introduce suitable solution operators. Given forcing terms $f_{1} \in L^{2}(\Omega_{1})$ and $f_{2} \in L^{2}(\Omega_{2})$, we define
\begin{align}
\label{eq:L_1}
\mathcal{L}_{f_{1}}: L^{2}(\Omega_{2}) \rightarrow \tilde{H}^{s_{1}}(\Omega_{1}), 
\qquad 
w_2 \mapsto \mathfrak{u}_{1} = \mathcal{L}_{f_{1}}(w_{2}),
\\
\label{eq:L_2}
\mathcal{L}_{f_{2}}: L^{2}(\Omega_{1}) \rightarrow \tilde{H}^{s_{2}}(\Omega_{2}), 
\qquad 
w_1 \mapsto \mathfrak{u}_{2} = \mathcal{L}_{f_{2}}(w_{1}).
\end{align}
Here, $\mathfrak{u}_{1}$ corresponds to the solution of \eqref{eq:first_EL_eq}, where $u_2$ is replaced by $w_{2}$, and $\mathfrak{u}_{2}$ corresponds to the solution of \eqref{eq:second_EL_eq}, where $u_1$ is replaced by $w_{1}$.
% Let $i \in \{1,2\}$. In the case where $f_{i} \equiv 0$, we simply write $\mathcal{L}_i$. We note that $\mathcal{L}_i$ is a linear operator.

We now prove that the operators $\mathcal{L}_{f_{1}}$ and $\mathcal{L}_{f_{2}}$ are well-defined and continuous.

\begin{lemma}[$\mathcal{L}_{f_{1}}$ and $\mathcal{L}_{f_{2}}$]
The operators $\mathcal{L}_{f_{1}}$ and $\mathcal{L}_{f_{2}}$, defined in \eqref{eq:L_1} and \eqref{eq:L_2}, respectively, are well-defined and continuous, and satisfy the following bounds:
\begin{align}
\label{eq:Lf1_bounded}
 | \mathcal{L}_{f_{1}}(w_2) |_{\tilde{H}^{s_1}(\Omega_1)}
 & \lesssim 
 \| f_1 \|_{L^2(\Omega_1)} + \| J \|_{L^1(\mathbb{R}^n)} \| w_2 \|_{L^2(\Omega_2)},
\\
\label{eq:Lf2_bounded}
| \mathcal{L}_{f_{2}}(w_1) |_{\tilde{H}^{s_2}(\Omega_2)}
& \lesssim 
\| f_2 \|_{L^2(\Omega_2)} + \| J \|_{L^1(\mathbb{R}^n)} \| w_1 \|_{L^2(\Omega_1)}.
\end{align}
\end{lemma}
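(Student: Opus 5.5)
We treat each subproblem as a Lax--Milgram problem on a single space; we describe the argument for $\mathcal{L}_{f_1}$, since the one for $\mathcal{L}_{f_2}$ is symmetric. Given $w_2 \in L^2(\Omega_2)$, the relation \eqref{eq:first_EL_eq} with $u_2$ replaced by $w_2$ reads
\[
\mathcal{A}_1(\mathfrak{u}_1,v_1) = \int_{\Omega_1} f_1 v_1 \,\mathrm{d}x + \int_{\Omega_1} \vartheta_{w}(x) v_1(x)\,\mathrm{d}x \qquad \forall v_1 \in \tilde{H}^{s_1}(\Omega_1).
\]
Here we set
\[
\mathcal{A}_1(u_1,v_1) := \mathcal{C}_1\int_{\Omega_2^c}\int_{\Omega_2^c}\frac{(u_1(x)-u_1(y))(v_1(x)-v_1(y))}{|x-y|^{n+2s_1}}\,\mathrm{d}y\,\mathrm{d}x + \int_{\Omega_1} g(x) u_1(x) v_1(x)\,\mathrm{d}x,
\]
with $g(x)=\int_{\Omega_2}J(x-y)\,\mathrm{d}y$ and $\vartheta_w = J\star(w_2\chi_{\Omega_2})$. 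The key observation is that $\mathcal{A}_1(u_1,v_1) = \mathcal{A}(u_1+0,v_1+0)$, i.e., $\mathcal{A}_1$ is the restriction of $\mathcal{A}$ to the closed subspace $\tilde{H}^{s_1}(\Omega_1)\oplus\{0\}$ of $H$.

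The proof then proceeds in three steps.

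\emph{Step 1.} Continuity and coercivity of $\mathcal{A}_1$ on $\tilde H^{s_1}(\Omega_1)$ follow immediately from Lemma~\ref{lemma:prop_A}. In particular,
\[
\mathcal{A}_1(v_1,v_1)=\mathcal{A}(v_1,v_1)\ge \alpha |v_1|_H^2 = \alpha|v_1|^2_{\tilde H^{s_1}(\Omega_1)}.
\]
This sidesteps redoing the compactness argument of Lemma~\ref{lemma:L2_lower_bound_E}. Alternatively, coercivity could be obtained directly from \eqref{eq:first_ineq_tildeHs_Hs_norm}, but the lower-order term there has the wrong sign, so invoking Lemma~\ref{lemma:prop_A} is the clean route.

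\emph{Step 2.} The right-hand side is a bounded linear functional on $\tilde H^{s_1}(\Omega_1)$. By Cauchy--Schwarz, Lemma~\ref{lemma:cont_conv}, and the Poincar\'e inequality \eqref{eq:Poincare},
\[
\Big|\int_{\Omega_1}(f_1+\vartheta_w)v_1\Big| \le \big(\|f_1\|_{L^2(\Omega_1)}+\|J\|_{L^1(\mathbb{R}^n)}\|w_2\|_{L^2(\Omega_2)}\big)\,C\,|v_1|_{\tilde H^{s_1}(\Omega_1)}.
\]
The Lax--Milgram lemma then yields a unique $\mathfrak{u}_1$, so $\mathcal{L}_{f_1}$ is well-defined.

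\emph{Step 3.} Testing with $v_1=\mathfrak{u}_1$ and using the coercivity from Step~1 together with the bound from Step~2 gives \eqref{eq:Lf1_bounded}.

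Finally, continuity of the (affine) map follows by linearity. Since $\mathcal{L}_{f_1}(w_2)-\mathcal{L}_{f_1}(\tilde w_2) = \mathcal{L}_{0}(w_2-\tilde w_2)$, applying \eqref{eq:Lf1_bounded} with $f_1=0$ gives
\[
|\mathcal{L}_{f_1}(w_2)-\mathcal{L}_{f_1}(\tilde w_2)|_{\tilde H^{s_1}(\Omega_1)}\lesssim \|J\|_{L^1(\mathbb{R}^n)}\|w_2-\tilde w_2\|_{L^2(\Omega_2)},
\]
i.e., Lipschitz continuity. No step is a real obstacle. The only point needing care is identifying $\mathcal{A}_1$ with the restriction of $\mathcal{A}$, which requires checking that the coupling term with $v_2=0$ and $u_2$ moved to the right-hand side splits exactly into $\int_{\Omega_1} g\,u_1 v_1$ minus $\int_{\Omega_1}\vartheta_w v_1$, using Fubini and the symmetry of $J$.
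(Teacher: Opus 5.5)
Your proof is correct, but it follows a different route from the paper's.

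The paper argues variationally. It introduces the single-domain energies $\mathfrak{E}_1$ and $\mathcal{E}_1$. It re-derives an $L^2$ lower bound $\mathfrak{E}_1(\mathfrak{u}_1)\ge \mathtt{C}_1\|\mathfrak{u}_1\|^2_{L^2(\Omega_1)}$ by adapting the contradiction/compactness argument of Lemma~\ref{lemma:L2_lower_bound_E}, and combines it with \eqref{eq:first_ineq_tildeHs_Hs_norm} to get coercivity of $\mathcal{E}_1$. It then obtains a unique minimizer by the direct method and strict convexity, identifies the minimizer through the Euler--Lagrange equation \eqref{eq:Lf1_equation}, and only then tests with $\mathfrak{u}_1$.

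You instead observe that the left-hand side of \eqref{eq:Lf1_equation} equals $\mathcal{A}(\mathfrak{u}_1+0,v+0)$ and that $|v+0|_H=|v|_{\tilde H^{s_1}(\Omega_1)}$. Continuity and coercivity are then inherited directly from Lemma~\ref{lemma:prop_A}, and Lax--Milgram does the rest. This is shorter and avoids redoing any compactness argument. It uses the same identification of the $\Omega_1$-subproblem with $\mathcal{A}$ tested on $\tilde H^{s_1}(\Omega_1)\oplus\{0\}$ that the paper itself relies on in the proof of Lemma~\ref{lemma:orth_proj}. You also make the continuity claim explicit as a Lipschitz bound via $\mathcal{L}_{f_1}(w_2)-\mathcal{L}_{f_1}(\tilde w_2)=\mathcal{L}_1(w_2-\tilde w_2)$; the paper leaves this implicit.

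What the paper's route buys:
\begin{itemize}
\item It gives a variational characterization of $\mathcal{L}_{f_1}(w_2)$, in keeping with Section~\ref{sec:nonloc_coupled_model}.
\item Its coercivity comes from the regional term alone, since the $J$-term is nonnegative and can be dropped. So its hidden constant need not depend on $J$.
\end{itemize}
Your constant is $C/\alpha$, with $\alpha$ from Lemma~\ref{lemma:prop_A}. That $\alpha$ is obtained through Lemma~\ref{lemma:L2_lower_bound_E} applied to the full coupled energy $E$, so it may depend on $J$. This makes the explicit factor $\|J\|_{L^1(\mathbb{R}^n)}$ in \eqref{eq:Lf1_bounded} less informative in your version. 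It is harmless for the later uses, which are qualitative.

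A minor point: splitting the coupling term into $\int_{\Omega_1} g\,\mathfrak{u}_1 v$ and $\int_{\Omega_1}\vartheta_w v$ needs only linearity and Fubini, not the symmetry of $J$.
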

\begin{proof}
We analyze the operator $\mathcal{L}_{f_{1}}$; the analysis for the operator $\mathcal{L}_{f_{2}}$ is similar.

Let $w_2 \in L^2(\Omega_2)$. Define $\mathfrak{E}_1: \tilde{H}^{s_1}(\Omega_1) \rightarrow \mathbb{R}^{+}_0$ and $\mathcal{E}_1: \tilde{H}^{s_1}(\Omega_1) \rightarrow \mathbb{R}$ as follows:
\begin{align}
\mathfrak{E}_1(\mathfrak{u}_1) &:=
\frac{\mathcal{C}_{1}}{2} \int_{\Omega_{2}^c}\int_{\Omega_{2}^c}\frac{(\mathfrak{u}_1(x) - \mathfrak{u}_1(y))^2}{|x - y|^{n + 2s_{1}}}\mathrm{d}y\mathrm{d}x
+
\dfrac{1}{2}\int_{\Omega_1}\int_{\Omega_{2}}J(x - y)\mathfrak{u}_1^2(x) \mathrm{d}y\mathrm{d}x,
\\
\mathcal{E}_1(\mathfrak{u}_1) &:=
\mathfrak{E}_1(\mathfrak{u}_1) - \int_{\Omega_1} f_1(x) \mathfrak{u}_1(x)\mathrm{d}x
-
\int_{\Omega_1}\int_{\Omega_{2}}J(x - y) w_2(y)\mathfrak{u}_1(x) \mathrm{d}y\mathrm{d}x.
\end{align}
We first note that $\mathfrak{E}_1(\mathfrak{u}_1) \geq \mathtt{C}_1 \| \mathfrak{u}_1 \|^2_{L^2(\Omega_1)}$ for all $\mathfrak{u}_1 \in \tilde{H}^{s_1}(\Omega_1)$, where $\mathtt{C}_1$ is a positive constant. This result follows from an adaptation of the arguments in the proof of Lemma \ref{lemma:L2_lower_bound_E}. With this bound, we follow the arguments in the proof of Theorem \ref{thm:existence_of_minimizer} and use \eqref{eq:first_ineq_tildeHs_Hs_norm}, the nonnegativity property of the kernel $J$, and the continuity property of Lemma \ref{lemma:cont_conv} to derive the following coercivity properties:
\begin{align}
\label{eq:Lf1_coercive_1}
\mathfrak{E}_1 (\mathfrak{u}_1) & \gtrsim | \mathfrak{u}_1 |^2_{\tilde{H}^{s_1}(\Omega_1)}
\qquad
\forall \mathfrak{u}_1 \in \tilde{H}^{s_1}(\Omega_1),
\\
\label{eq:Lf1_coercive_2}
\mathcal{E}_1 (\mathfrak{u}_1) & \gtrsim | \mathfrak{u}_1 |^2_{\tilde{H}^{s_1}(\Omega_1)}- \|f_{1}\|^{2}_{L^{2}(\Omega_{1})} - \|J\|^{2}_{L^{1}(\mathbb{R}^{n})} \| w_2 \|_{L^{2}(\Omega_{2})}^{2}
\qquad
\forall \mathfrak{u}_1 \in \tilde{H}^{s_1}(\Omega_1).
\end{align}
Let us also note that $\mathcal{E}_1$ is convex and continuous in $\tilde{H}^{s_1}(\Omega_1)$. The convexity of $\mathcal{E}_1$ is clear, and its continuity follows from the arguments presented in the proof of Lemma \ref{lemma:mathcalE_cont_convex}. With these properties, the direct method of the calculus of variations allows us to deduce the existence of a minimizer $\mathfrak{u}_1$ of $\mathcal{E}_1$ in $\tilde{H}^{s_{1}}(\Omega_{1})$. The strict convexity of $\mathcal{E}_1$ guarantees the uniqueness of $\mathfrak{u}_1$. Finally, as in section \ref{sec:euler_lagrange}, it can be shown that $\mathfrak{u}_1$ is the unique solution to the problem
\begin{multline}
\mathcal{C}_{1} \int_{\Omega_{2}^c} \int_{\Omega_{2}^c}\frac{(\mathfrak{u}_1(x) - \mathfrak{u}_1(y))(v(x) - v(y))}{|x - y|^{n + 2s_{1}}}\mathrm{d}y\mathrm{d}x
\\
+
\int_{\Omega_1}\int_{\Omega_{2}}J(x - y)\mathfrak{u}_1(x) v(x) \mathrm{d}y\mathrm{d}x 
= 
\int_{\Omega_1} f_1(x) v(x)\mathrm{d}x
+ 
\int_{\Omega_{1}}\int_{\Omega_{2}}J(x-y)w_2(y)v(x) \mathrm{d}y\mathrm{d}x
\label{eq:Lf1_equation}
\end{multline}
for all $v \in \tilde{H}^{s_1}(\Omega_1)$, i.e., $\mathcal{L}_{f_{1}}(w_2) = \mathfrak{u}_1$. This shows that $\mathcal{L}_{f_{1}}$ is well-defined. We now set $v = \mathfrak{u}_1$ in \eqref{eq:Lf1_equation} and use the nonnegativity of $J$, the Cauchy--Schwarz inequality, the Poincaré inequality \eqref{eq:Poincare}, Lemma \ref{lemma:cont_conv}, and Young's inequality to obtain a stability bound for $\mathfrak{u}_1$, which implies the desired bound for $\mathcal{L}_{f_{1}}$.
\end{proof}

Let $i \in \{1,2\}$. In the case where $f_{i} \equiv 0$, we simply write $\mathcal{L}_i$. Since problem \eqref{eq:Lf1_equation} is linear in both $\mathfrak{u}_1$ and $w_2$, and its solution is unique (as established in the proof above), it follows that $\mathcal{L}_1$ is a linear operator; the same result holds for $\mathcal{L}_2$.

Having defined the operators $\mathcal{L}_{f_{1}}$ and $\mathcal{L}_{f_{2}}$, we rewrite the continuous alternating scheme as follows: Given $u_{2}^0 \in \tilde{H}^{s_2}(\Omega_{2})$, we define $u^0 = 0 + u_2^0 \in H$, where $0$ denotes the zero function on $\Omega_1$, and compute, for $i=1$ until convergence,
\begin{equation}
\label{eq:schwartz_method}
\begin{aligned}
& u_{1}^{2i-1}  = \mathcal{L}_{f_{1}}(u_{2}^{2i-2}),
\qquad
u_2^{2i-1} = u_2^{2i-2},
\qquad
u^{2i-1} = u_1^{2i-1} + u_2^{2i-1} \in H,
\\
& u_{2}^{2i}  = \mathcal{L}_{f_{2}}(u_{1}^{2i-1}),
\qquad
u_{1}^{2i} = u_{1}^{2i-1},
\qquad
u^{2i} = u_1^{2i} + u_2^{2i} \in H.
\end{aligned}
\end{equation}

\subsubsection{Definition of convergence and equivalence}
We say that \textbf{Algorithm} \ref{Algorithm_cont} converges if the sequence $\{ u^{i} \}_{i \geq 0} \subset H$, where $u^{i} = u^i_{1} + u^i_{2}$, converges to the solution $u = u_{1} + u_{2}$ of problem \eqref{eq:first_EL_eq}--\eqref{eq:second_EL_eq} in the following sense:
\[
u^{i}  \rightarrow u ~\mathrm{in}~H,
\qquad
i \uparrow \infty.
\]

\begin{remark}[equivalence]
\label{rem:conv_schwartz_method}
Let $f_1 \in L^2(\Omega_1)$, $f_2 \in L^2(\Omega_2)$, and let $u = u_1 + u_2$ be the unique solution of the coupled system \eqref{eq:first_EL_eq}--\eqref{eq:second_EL_eq}. The key observation is that the error sequence $\{ u - u^{i} \}_{i \geq 0}$ satisfies the same iterative scheme as \eqref{eq:schwartz_method} but with
$f_1 \equiv f_2 \equiv 0$ and initial datum $u_2 - u_2^0$. Indeed, using
the affine dependence of $\mathcal{L}_{f_1}$ and $\mathcal{L}_{f_2}$ on
their arguments, the sequence $\{u - u^i\}_{i\geq 0}$ verifies for every
$i \geq 1$:
\begin{equation}
\label{eq:schwartz_method_0_0_equivalence}
\begin{aligned}
& u_1 - u_{1}^{2i-1}  = \mathcal{L}_{f_1}(u_2) - \mathcal{L}_{f_1}(u_2^{2i-2}) = \mathcal{L}_{1}(u_2 - u_{2}^{2i-2}),
\\
& u_2 - u_{2}^{2i-1}  = u_2 - u_2^{2i-2},
\\
& u_2 - u_{2}^{2i}  = \mathcal{L}_{f_2}(u_1) - \mathcal{L}_{f_2}(u_1^{2i-1}) = \mathcal{L}_{2}(u_1 - u_{1}^{2i-1}),
\\
& u_1 - u_{1}^{2i}  = u_1 - u_{1}^{2i-1},
\end{aligned}
\end{equation}
where $\mathcal{L}_i$ is defined as in \eqref{eq:L_1}--\eqref{eq:L_2} with $f_i \equiv 0$. Therefore, it suffices to prove convergence of \eqref{eq:schwartz_method} in the case $f_1 \equiv f_2 \equiv 0$, which corresponds to the following iterative method: Given $u_2^0 \in \tilde{H}^{s_2}(\Omega_2)$, define $\mathtt{u}^{0} = 0 + u_2^{0} \in H$, where $0$ denotes the zero function on $\Omega_1$, and compute, for $i=1$ until convergence,
\begin{equation}
\label{eq:schwartz_method_0_0}
\begin{aligned}
& \mathtt{u}_{1}^{2i-1} = \mathcal{L}_{1}(\mathtt{u}_{2}^{2i-2}),
\qquad
\mathtt{u}_{2}^{2i-1} = \mathtt{u}_{2}^{2i-2},
\qquad
\mathtt{u}^{2i-1} = \mathtt{u}_{1}^{2i-1} + \mathtt{u}_{2}^{2i-1} \in H,
\\
& \mathtt{u}_{2}^{2i} = \mathcal{L}_{2}(\mathtt{u}_{1}^{2i-1}),
\qquad
\mathtt{u}_{1}^{2i} = \mathtt{u}_{1}^{2i-1},
\qquad
\mathtt{u}^{2i} = \mathtt{u}_{1}^{2i} + \mathtt{u}_{2}^{2i} \in H.
\end{aligned}
\end{equation}
Here, $\{ \mathtt{u}^i \}_{i \geq 0}$ denotes the sequence corresponding to the case $f_1 \equiv f_2 \equiv 0$, to distinguish it from the original sequence $\{ u^i \}_{i\geq 0}$.
\end{remark}

\subsubsection{Convergence analysis}
We define 
\begin{align}
V_{1} = \{u = u_{1} + u_{2} \in H: u_{1} = \mathcal{L}_{1}(u_{2}),\, u_{2} \in \tilde{H}^{s_2}(\Omega_{2}) \}, \\
V_{2} = \{u = u_{1} + u_{2} \in H: u_{2} = \mathcal{L}_{2}(u_{1}),\, u_{1} \in \tilde{H}^{s_1}(\Omega_{1}) \}.
\end{align}
Since $\mathcal{L}_{1}$ and $\mathcal{L}_{2}$ are both linear, we deduce that $V_{1}$ and $V_{2}$ are both subspaces of $H$.

\begin{lemma}[$H = V_{1} \oplus V_{2}$]
It holds that $H = V_{1} \oplus V_{2}$.
\label{lemma:V1+V2}
\end{lemma}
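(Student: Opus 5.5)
The plan is to recast $V_1$ and $V_2$ as orthogonal complements with respect to the inner product $\mathcal{A}$ (Remark \ref{rem:inner_product}), and then to produce the decomposition with the Lax--Milgram lemma. Set $X_1 := \{ x_1 + 0 : x_1 \in \tilde H^{s_1}(\Omega_1)\}$ and $X_2 := \{ 0 + x_2 : x_2 \in \tilde H^{s_2}(\Omega_2)\}$, so that $H = X_1 \oplus X_2$.

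\emph{Step 1: characterization of $V_1$ and $V_2$.} Problem \eqref{eq:Lf1_equation} with $f_1 \equiv 0$ states that $\mathfrak{u}_1 = \mathcal{L}_1(w_2)$ if and only if $\mathcal{A}(\mathfrak{u}_1 + w_2, v + 0) = 0$ for all $v \in \tilde H^{s_1}(\Omega_1)$. This is read off from the definition \eqref{eq:bilinear_form} of $\mathcal{A}$ with $v_2 = 0$, moving the $w_2$ term to the other side. Uniqueness of $\mathcal{L}_1(w_2)$ then gives
\[
V_1 = \{ w \in H : \mathcal{A}(w, x) = 0 \ \forall x \in X_1 \}.
\]
The same argument gives the analogous identity for $V_2$ with $X_2$ in place of $X_1$.

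\emph{Step 2: $V_1 \cap V_2 = \{0\}$.} If $w \in V_1 \cap V_2$, then $w$ is $\mathcal{A}$-orthogonal to $X_1 + X_2 = H$, and in particular to itself. Hence $\mathcal{A}(w,w) = 0$, and the coercivity \eqref{eq:A_is_coercive} yields $w = 0$.

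\emph{Step 3: $H = V_1 + V_2$.} This is the main step. Given $u \in H$, we look for $w = w_1 + \mathcal{L}_2(w_1) \in V_2$ such that $u - w \in V_1$. By Step 1, this means
\[
\mathcal{A}(w, x_1 + 0) = \mathcal{A}(u, x_1 + 0) \quad \forall x_1 \in \tilde H^{s_1}(\Omega_1).
\]
Define on $\tilde H^{s_1}(\Omega_1)$ the bilinear form $b(w_1, x_1) := \mathcal{A}(w_1 + \mathcal{L}_2 w_1, x_1 + 0)$. Because $w_1 + \mathcal{L}_2 w_1 \in V_2$ is $\mathcal{A}$-orthogonal to $0 + \mathcal{L}_2 x_1 \in X_2$, we can rewrite
\[
b(w_1, x_1) = \mathcal{A}(w_1 + \mathcal{L}_2 w_1,\, x_1 + \mathcal{L}_2 x_1).
\]
In this form $b$ is symmetric and coercive:
\[
b(w_1, w_1) \geq \alpha\big( |w_1|^2_{\tilde H^{s_1}(\Omega_1)} + |\mathcal{L}_2 w_1|^2_{\tilde H^{s_2}(\Omega_2)} \big) \geq \alpha |w_1|^2_{\tilde H^{s_1}(\Omega_1)}.
\]
It is continuous by Lemma \ref{lemma:prop_A}, the bound \eqref{eq:Lf2_bounded} with $f_2 = 0$, and the Poincar\'e inequality \eqref{eq:Poincare}. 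The right-hand side $x_1 \mapsto \mathcal{A}(u, x_1 + 0)$ is a bounded linear functional, again by Lemma \ref{lemma:prop_A}. The Lax--Milgram lemma therefore provides $w_1$. Setting $w := w_1 + \mathcal{L}_2 w_1 \in V_2$, we get $u - w \in V_1$ and hence $u = (u - w) + w$.

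The delicate point is the coercivity of $b$. In its original form $b$ is not obviously coercive; it becomes so only after the symmetrization through the orthogonality of $V_2$ to $X_2$. A more naive route would be to invert $I - \mathcal{L}_2\mathcal{L}_1$ directly, but that requires a contraction estimate which the above argument avoids. Steps 2 and 3 together give $H = V_1 \oplus V_2$.
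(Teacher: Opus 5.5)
Your proof is correct, and it takes a genuinely different route from the paper's.

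The paper works directly with the solution operators. For the trivial intersection, it notes that any $u\in V_1\cap V_2$ solves the homogeneous coupled system. For $H=V_1+V_2$, it reduces the splitting to the operator equation $(I-\mathcal{L}_2\circ\mathcal{L}_1)v_2=u_2-\mathcal{L}_2(u_1)$ on $L^2(\Omega_2)$ and solves it by the Fredholm alternative. Here $\mathcal{L}_2\circ\mathcal{L}_1$ is compact on $L^2(\Omega_2)$ because of the compact embedding $\tilde H^{s_2}(\Omega_2)\hookrightarrow L^2(\Omega_2)$, and injectivity again comes from uniqueness for the homogeneous coupled problem. It then sets $w_1:=u_1-\mathcal{L}_1(v_2)$.

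You instead identify $V_i$ as the $\mathcal{A}$-orthogonal complement of the coordinate subspace $X_i$. You then obtain the splitting from Lax--Milgram applied to a Schur-complement-type form. Its coercivity, which is not visible in the original form, you correctly extract by symmetrizing with the orthogonality of $V_2$ to $X_2$.

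What your route buys:
\begin{itemize}
\item It needs no compactness and no Fredholm theory, and it gives a quantitative bound on the decomposition in terms of $\alpha$, $M$ and the bound for $\mathcal{L}_2$.
\item It never leaves $H$, so you need not check afterwards that the $L^2$ solution $v_2$ actually lies in $\tilde H^{s_2}(\Omega_2)$.
\item The identity $V_i=X_i^{\perp}$ makes the orthogonal-projection lemma essentially immediate.
\item It also makes immediate the hypothesis $V_1^{\perp}\oplus V_2^{\perp}=H$ used in the convergence theorem, since $V_i^{\perp}=X_i$ with $X_i$ closed.
\end{itemize}
The paper's route, in turn, stays in the language of the operators $\mathcal{L}_i$ that define the algorithm.

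One correction to your closing remark: inverting $I-\mathcal{L}_2\circ\mathcal{L}_1$ directly does not require a contraction estimate. The paper does exactly this, using compactness plus injectivity rather than a bound $\|\mathcal{L}_2\circ\mathcal{L}_1\|<1$.
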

\begin{proof} 
We divide the proof into two steps.

\emph{Step 1.} $V_{1} \cap V_{2} = \{0\}$. Let $u = u_{1} + u_{2} \in V_1 \cap V_{2}$. From the definition of $V_{1}$ and $V_{2}$ we deduce that $u_{1} = \mathcal{L}_{1}(u_{2}) \in \tilde{H}^{s_1}(\Omega_1)$ and $u_{2} = \mathcal{L}_{2}(u_1) \in \tilde{H}^{s_2}(\Omega_2)$. We now use the definition of the operators $\mathcal{L}_{1}$ and $\mathcal{L}_{2}$ from \eqref{eq:L_1} and \eqref{eq:L_2}, respectively (with $f_1 \equiv f_2 \equiv 0$), to conclude that $u = u_1 + u_2 \in H$ solves the coupled problem \eqref{eq:first_EL_eq}--\eqref{eq:second_EL_eq} with $f_1 \equiv f_2 \equiv 0$. Since this problem is well-posed, we immediately deduce that $u_1 \equiv u_2 \equiv 0$ and then that $u \equiv 0$.

\emph{Step 2.} $H = V_{1} + V_{2}$. Let $u = u_1 + u_2 \in H$. We need to prove that there exist $v \in V_1$ and $w \in V_2$ such that $u = v + w$. By the definition of the subspaces $V_1$ and $V_2$, we have that $v = v_1 + v_2$, where $v_1 = \mathcal{L}_1(v_2)$, and that $w = w_1 + w_2$, where $w_2 = \mathcal{L}_2(w_1)$. In view of this fact, in the following we prove the existence of $v_2$ and $w_1$, so that $u_1 = \mathcal{L}_1(v_2) + w_1$ and $u_2 = v_2 + \mathcal{L}_2(w_1)$.

\emph{Step 2.1} \emph{Existence of $v_2$.} Applying the linear operator $\mathcal{L}_2$ to the relation $u_1 = \mathcal{L}_1(v_2) + w_1$ gives $\mathcal{L}_2(u_1) = \mathcal{L}_2(\mathcal{L}_1(v_2)) + \mathcal{L}_2(w_1)$. We use this relation and $u_2 = v_2 + \mathcal{L}_2(w_1)$ to obtain $\mathcal{L}_2(u_1) = \mathcal{L}_2(\mathcal{L}_1(v_2)) + u_2 - v_2$, which can be rewritten as follows:
\begin{equation}
 u_2 - \mathcal{L}_2(u_1) = v_2 - \mathcal{L}_2(\mathcal{L}_1(v_2)) = (I - \mathcal{L}_2 \circ \mathcal{L}_1) v_2.
 \label{eq:v_2}
\end{equation}
We now note that the operator $\mathcal{L}_2 \circ \mathcal{L}_1: L^2(\Omega_2) \rightarrow \tilde{H}^{s_2}(\Omega_{2})$. Given the compact embedding $\tilde{H}^{s_2}(\Omega_2) \hookrightarrow L^2(\Omega_2)$, we can consider $\mathcal{L}_2 \circ \mathcal{L}_1: L^2(\Omega_2) \rightarrow L^2(\Omega_2)$, which is thus a linear and compact operator. 

With this setting in hand, let us now show that there exists a unique $v_2$ that verifies \eqref{eq:v_2}. To accomplish this task, we will rely on the Fredholm alternative and prove that $\mathrm{Ker}(I - \mathcal{L}_2 \circ \mathcal{L}_1) = \{ 0 \}$ to conclude that $(I - \mathcal{L}_2 \circ \mathcal{L}_1)$ is a bijection.  Let $\mathfrak{v}_2 \in L^2(\Omega_2)$ be such that $(I - \mathcal{L}_2 \circ \mathcal{L}_1) \mathfrak{v}_2 = 0$. This implies that $(\mathcal{L}_2 \circ \mathcal{L}_1) (\mathfrak{v}_2) = \mathfrak{v}_2$. Using the definition of the linear maps $\mathcal{L}_1$ and $\mathcal{L}_2$, we can show that $\mathcal{L}_1(\mathfrak{v}_2) + \mathfrak{v}_2 = \mathcal{L}_1(\mathfrak{v}_2) + \mathcal{L}_2( \mathcal{L}_1 (\mathfrak{v}_2)) \in H$ solves the coupled problem \eqref{eq:first_EL_eq}--\eqref{eq:second_EL_eq} with $f_1 \equiv f_2 \equiv 0$. Since this problem is well-posed, we can thus conclude that $\mathfrak{v}_2 = 0$. It follows that $I - \mathcal{L}_2 \circ \mathcal{L}_1 : L^2(\Omega_2) \rightarrow L^2(\Omega_2)$ is a bijection and thus that there exists a unique $v_2$ that verifies \eqref{eq:v_2}.

%\emph{Step 2.2} \emph{Existence of $w_1$.} Similar arguments to the ones elaborated in Step 2.1 allows us to conclude the existence of a unique $w_1$.

\emph{Step 2.2} \emph{Existence of $w_1$.} Having determined $v_{2}$, we consider $w_{1} \in \tilde{H}^{s_1}(\Omega_1)$ as
\begin{equation}\label{eq:w_1}
w_{1} = u_{1} - \mathcal{L}_{1}(v_{2}).
\end{equation}

\emph{Step 2.3.} $u = v + w$. From the relations \eqref{eq:v_2} and \eqref{eq:w_1} we obtain
\begin{align*}
u_{1} = w_{1} + \mathcal{L}_{1}(v_{2}),
\qquad
u_{2}
% = v_{2} + \mathcal{L}_{2}(u_{1} - \mathcal{L}_{1}(v_{2}))
= v_{2} + \mathcal{L}_{2}(w_{1}),
\end{align*}
as we intended to show. This concludes the proof.
\end{proof} 

To continue our analysis, we define the linear operators
\begin{align}
\label{eq:P_1}
\mathcal{P}_{1}: H \rightarrow H, 
\qquad 
u = u_{1} + u_{2} \mapsto \mathcal{P}_{1}(u) = \mathcal{L}_{1}(u_{2}) + u_{2},
\\
\label{eq:P_2}
\mathcal{P}_{2}: H \rightarrow H, 
\qquad 
u = u_{1} + u_{2} \mapsto \mathcal{P}_{2}(u) = u_{1} + \mathcal{L}_{2}(u_{1}).
\end{align}
We note that $\mathcal{P}_{1}$ and $\mathcal{P}_{2}$ are linear. In the following, we analyze orthogonality properties of these operators with respect to the bilinear form $\mathcal{A}$, which induces an inner product in $H \times H$; see Remark \ref{rem:inner_product}. For this purpose, we define
\begin{equation}
 \langle u,v \rangle_{\mathcal{A}} := \mathcal{A}(u,v),
 \qquad
 |v|_{\mathcal{A}} := \langle v,v \rangle_{\mathcal{A}}^{\frac{1}{2}}
 \qquad
 \forall u,v \in H.
 \label{eq:inner_product_A}
\end{equation}

\begin{lemma}[orthogonal projection]\label{lemma:orth_proj}
Let $i\in \{1,2\}$. The map $\mathcal{P}_{i}$ is an orthogonal projection from $H$ onto $V_{i}$ with respect to the inner product defined in \eqref{eq:inner_product_A}.
\end{lemma}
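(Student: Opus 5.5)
The plan is to verify, for $i=1$ in detail (the case $i=2$ is symmetric), three properties. First, $\mathcal{P}_1$ maps $H$ into $V_1$. Second, $\mathcal{P}_1$ restricted to $V_1$ is the identity, so $\mathcal{P}_1^2=\mathcal{P}_1$ and $\mathcal{P}_1(H)=V_1$. Third, the residual $u-\mathcal{P}_1 u$ is $\mathcal{A}$-orthogonal to $V_1$. Together with linearity, which was already noted, these characterize $\mathcal{P}_1$ as the orthogonal projection onto $V_1$ with respect to $\langle\cdot,\cdot\rangle_{\mathcal{A}}$ from \eqref{eq:inner_product_A}. Remark \ref{rem:inner_product} guarantees that this is a genuine inner product.

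\emph{Range and idempotence.} Let $u=u_1+u_2\in H$. Since $u_2\in \tilde H^{s_2}(\Omega_2)\subset L^2(\Omega_2)$, the function $\mathcal{L}_1(u_2)\in\tilde H^{s_1}(\Omega_1)$ is well defined by the lemma on $\mathcal{L}_{f_1}$. Then $\mathcal{P}_1 u=\mathcal{L}_1(u_2)+u_2$ has exactly the form required in the definition of $V_1$. Conversely, if $v=v_1+v_2\in V_1$, then $v_1=\mathcal{L}_1(v_2)$, so $\mathcal{P}_1 v=\mathcal{L}_1(v_2)+v_2=v$. Hence $\mathcal{P}_1(H)=V_1$ and $\mathcal{P}_1^2=\mathcal{P}_1$. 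Since $\mathcal{P}_1$ leaves the $\Omega_2$-component unchanged, this also follows directly.

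\emph{Orthogonality.} This is the key step. Set $w:=u-\mathcal{P}_1u=(u_1-\mathcal{L}_1(u_2))+0$. Its $\Omega_2$-component vanishes, so $w=w_1$ with $w_1\in\tilde H^{s_1}(\Omega_1)$. For any $v=\mathcal{L}_1(v_2)+v_2\in V_1$, the definition \eqref{eq:bilinear_form} reduces, because the second component of $w$ is zero, to
\[
\mathcal{A}(w,v)=\mathcal{C}_1\int_{\Omega_2^c}\int_{\Omega_2^c}\frac{(w_1(x)-w_1(y))(v_1(x)-v_1(y))}{|x-y|^{n+2s_1}}\,\mathrm{d}y\,\mathrm{d}x+\int_{\Omega_1}\int_{\Omega_2}J(x-y)\,w_1(x)\,\big(v_1(x)-v_2(y)\big)\,\mathrm{d}y\,\mathrm{d}x .
\]
Next I use that $v_1=\mathcal{L}_1(v_2)$ solves \eqref{eq:Lf1_equation} with $f_1\equiv0$ and $w_2$ replaced by $v_2$. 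Testing that equation with the admissible function $w_1\in\tilde H^{s_1}(\Omega_1)$ gives exactly that the right-hand side above equals zero. Here one uses the symmetry of the $\mathcal{C}_1$ double integral in its two arguments, and moves the term $\int_{\Omega_1}\int_{\Omega_2}J(x-y)v_2(y)w_1(x)\,\mathrm{d}y\,\mathrm{d}x$ to the left-hand side. Thus $\langle u-\mathcal{P}_1u,v\rangle_{\mathcal{A}}=0$ for all $v\in V_1$.

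\emph{The case $i=2$.} Here $u-\mathcal{P}_2u=0+(u_2-\mathcal{L}_2(u_1))$ has vanishing $\Omega_1$-component. The same computation applies, now invoking the analogue of \eqref{eq:Lf1_equation} for $\mathcal{L}_2$ with $f_2\equiv0$; this is the weak problem \eqref{eq:second_EL_eq} with $u_1$ replaced by $v_1$. The only delicate point is the bookkeeping of the variables in the coupling term $J(x-y)$, with $x\in\Omega_1$ and $y\in\Omega_2$, and of the sign convention $u_2(y)-u_1(x)$. Boundedness of $\mathcal{P}_i$ on $H$ is not needed for the projection property itself, since the orthogonality relation yields it automatically. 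If desired, it also follows from \eqref{eq:Lf1_bounded}--\eqref{eq:Lf2_bounded} together with the Poincaré inequality \eqref{eq:Poincare}.
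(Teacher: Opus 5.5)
Your proposal is correct and follows essentially the same route as the paper's proof. Both verify idempotence of $\mathcal{P}_1$, then obtain $\mathcal{A}$-orthogonality of the residual $(u_1-\mathcal{L}_1(u_2))+0$ to $V_1$ by testing the weak problem satisfied by $\mathcal{L}_1(v_2)$ with that residual; your explicit check that $\mathcal{P}_1(H)=V_1$ and your worked-out case $i=2$ are details the paper leaves implicit.
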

\begin{proof}
We analyze the operator $\mathcal{P}_{1}: H \rightarrow V_1$; the analysis for $\mathcal{P}_{2}$ is similar.

The fact that $\mathcal{P}_1$ is a projection is trivial. In fact, $\mathcal{P}_1(\mathcal{P}_1(u)) = \mathcal{P}_1(\mathcal{L}_{1}(u_{2}) + u_{2}) = \mathcal{L}_{1}(u_{2}) + u_2 = \mathcal{P}_{1}(u)$ for all $u = u_1 + u_2 \in H$. We now prove that $\mathcal{P}_1: H \rightarrow V_1$ is an orthogonal map with respect to the inner product 
$\langle \cdot, \cdot \rangle_{\mathcal{A}}$ defined in \eqref{eq:inner_product_A}. Given $u = u_{1} + u_{2} \in H$, we prove below that
\begin{equation}\label{eq:orthog_prop_P1}
\langle u - \mathcal{P}_{1}(u),v \rangle_{\mathcal{A}} = \langle (u_{1} - \mathcal{L}_{1}(u_{2})) + 0,v \rangle_{\mathcal{A}} = 0 \qquad \forall v = \mathcal{L}_{1}(v_{2}) + v_{2} \in V_{1}.
\end{equation}
Denote $U_{1} = u_{1} - \mathcal{L}_{1}(u_{2}) \in \tilde{H}^{s_1}(\Omega_1)$. From the definition of  the inner product $\langle \cdot , \cdot \rangle_{\mathcal{A}}$ and the bilinear form $\mathcal{A}(\cdot,\cdot)$, it follows that
\begin{align*}
\begin{split}
%\langle u - \mathcal{P}_{1}(u),v \rangle_{H}
\langle u - \mathcal{P}_{1}(u),v \rangle_{\mathcal{A}} & = \mathcal{C}_{1}\int_{\Omega_{2}^{c}}\int_{\Omega_{2}^{c}}\dfrac{(U_{1}(x) - U_{1}(y))(\mathcal{L}_{1}(v_{2})(x) - \mathcal{L}_{1}(v_{2})(y))}{|x - y|^{n + 2s_{1}}}\mathrm{d}y\mathrm{d}x 
\\ 
& + \int_{\Omega_{1}}\int_{\Omega_{2}}J(x - y)U_{1}(x)(\mathcal{L}_{1}(v_{2})(x) - v_{2}(y)) \mathrm{d}y\mathrm{d}x.
\end{split}
\end{align*}
On the other hand, let us note that $\mathcal{L}_{1}(v_{2}) \in \tilde{H}^{s_{1}}(\Omega_{1})$ solves the following problem:
\begin{align*}
& \mathcal{C}_{1}\int_{\Omega_{2}^{c}}\int_{\Omega_{2}^{c}}\dfrac{(\mathcal{L}_{1}(v_{2})(x) - \mathcal{L}_{1}(v_{2})(y))(z_{1}(x) - z_{1}(y))}{|x - y|^{n + 2s_{1}}}\mathrm{d}y\mathrm{d}x 
\\ 
&
+
\int_{\Omega_{1}}\int_{\Omega_{2}}J(x - y)((\mathcal{L}_{1}(v_{2})(x) - v_{2}(y))z_{1}(x) \mathrm{d}y\mathrm{d}x = 0 \qquad \forall z_{1} \in \tilde{H}^{s_{1}}(\Omega_{1}).
\end{align*} 
If we set $z_{1} = U_{1}$ in the previous weak formulation, we obtain \eqref{eq:orthog_prop_P1}, as we intended to show. This shows that $\mathcal{P}_1: H \rightarrow V_1$ is an orthogonal map with respect to 
%$\langle \cdot, \cdot \rangle_H$.
$\langle \cdot, \cdot \rangle_{\mathcal{A}}$.
\end{proof}

To present the main result of this section, we note that the elements of the sequence $\{ \mathtt{u}^i \}_{i \geq 0}$ defined in \eqref{eq:schwartz_method_0_0} can be rewritten in terms of the orthogonal projections $\mathcal{P}_1$ and $\mathcal{P}_2$, defined in \eqref{eq:P_1} and \eqref{eq:P_2}, respectively, as follows: $\mathtt{u}^{0} = 0 + u_{2}^{0}$ and for every $i \geq 1$,
\begin{align}
& \mathtt{u}^{2i-1}  = \mathcal{L}_1(\mathtt{u}_2^{2i-2}) + \mathtt{u}_2^{2i-2} = \mathcal{P}_1(\mathtt{u}^{2i-2}), 
\\
& \mathtt{u}^{2i}  = \mathtt{u}_1^{2i-1} + \mathcal{L}_2(\mathtt{u}_1^{2i-1}) = \mathcal{P}_2(\mathtt{u}^{2i-1}).
\end{align}
% where $\mathcal{P}_1: H \rightarrow H$, and $\mathcal{P}_2: H \rightarrow H$ are the linear operators defined in \eqref{eq:P_1} and \eqref{eq:P_2}, respectively.

\begin{theorem}[convergence of \textbf{Algorithm} \ref{Algorithm_cont}]
Given $f_{1} \in L^{2}(\Omega_{1})$ and $f_{2} \in L^{2}(\Omega_{2})$, the sequence $\{ u^i \}_{i \geq 0}$ generated by the continuous alternating method described in \eqref{eq:schwartz_method} converges to the unique minimizer $u = u_{1} + u_{2} \in H$ of the energy $\mathcal{E}$ defined in \eqref{eq:energy_2}, which is characterized as the unique solution of the system \eqref{eq:first_EL_eq}--\eqref{eq:second_EL_eq}:
\begin{equation}
u_{1}^{i} \rightarrow u_{1} \text{ in } \tilde{H}^{s_{1}}(\Omega_{1}), \qquad u_{2}^{i} \rightarrow u_{2} \text{ in } \tilde{H}^{s_{2}}(\Omega_{2}), \qquad i \uparrow \infty.
\label{eq:convergence_plain}
\end{equation}
Moreover, the method is geometrically convergent: there exists $\kappa \in (0,1)$ such that
\begin{equation}
|u_{1} - u_{1}^{i}|_{\tilde{H}^{s_{1}}(\Omega_{1})} + |u_{2} - u_{2}^{i}|_{\tilde{H}^{s_{2}}(\Omega_{2})} \lesssim \kappa^{i} \qquad \forall i \in \mathbb{N}.
\label{eq:convergence_geometrical}
\end{equation}
\end{theorem}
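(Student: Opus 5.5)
By Remark~\ref{rem:conv_schwartz_method}, the error $e^i := u - u^i$ is exactly the sequence $\{\mathtt{u}^i\}_{i\ge 0}$ of \eqref{eq:schwartz_method_0_0}, started from $\mathtt{u}^0 = 0 + (u_2 - u_2^0)$. Hence it suffices to prove that $\mathtt{u}^i \to 0$ geometrically in $H$. Since $\mathcal{A}$ is coercive and continuous (Lemma~\ref{lemma:prop_A}), the norms $|\cdot|_{\mathcal{A}}$ and $|\cdot|_H$ are equivalent. It is therefore enough to work in $|\cdot|_{\mathcal{A}}$, and to note that $|v_1|_{\tilde H^{s_1}(\Omega_1)} + |v_2|_{\tilde H^{s_2}(\Omega_2)} \lesssim |v|_H$. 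We have $\mathtt{u}^{2i} = (\mathcal{P}_2\mathcal{P}_1)^i \mathtt{u}^0$ and $\mathtt{u}^{2i-1} = \mathcal{P}_1 \mathtt{u}^{2i-2}$, and by Lemma~\ref{lemma:orth_proj} each $\mathcal{P}_j$ is an $\mathcal{A}$-orthogonal projection onto $V_j$. This is exactly the setting of the von Neumann--Kakutani alternating projection theorem. The plan is to show $\|\mathcal{P}_2\mathcal{P}_1\|_{\mathcal{A}} \le \kappa^2 < 1$ with the operator norm in $|\cdot|_{\mathcal{A}}$. Because $|\mathcal{P}_1 v|_{\mathcal{A}} \le |v|_{\mathcal{A}}$, the odd iterates then inherit the same rate, and \eqref{eq:convergence_geometrical} follows with the hidden constant proportional to $|u - u^0|_H$. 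Finally, \eqref{eq:convergence_plain} is immediate from \eqref{eq:convergence_geometrical}.

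The key quantitative step uses the cosine of the angle between $V_1$ and $V_2$,
\[
c := \sup\{\langle v,w\rangle_{\mathcal{A}} : v\in V_1,\ w\in V_2,\ |v|_{\mathcal{A}} = |w|_{\mathcal{A}} = 1\}.
\]
For $z\in V_1$ one has $\mathcal{P}_2 z \in V_2$ and $|\mathcal{P}_2 z|_{\mathcal{A}}^2 = \langle z, \mathcal{P}_2 z\rangle_{\mathcal{A}} \le c\,|z|_{\mathcal{A}} |\mathcal{P}_2 z|_{\mathcal{A}}$, so $|\mathcal{P}_2\mathcal{P}_1 v|_{\mathcal{A}} \le c\,|\mathcal{P}_1 v|_{\mathcal{A}} \le c\,|v|_{\mathcal{A}}$. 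It therefore suffices to prove $c<1$. By the classical Deutsch/Friedrichs characterization, $c<1$ is equivalent to $V_1\cap V_2=\{0\}$ together with $V_1+V_2$ being closed. Lemma~\ref{lemma:V1+V2} gives both, since $V_1+V_2 = H$ is closed; the closedness of $V_1$ and $V_2$ themselves follows from the continuity of $\mathcal{L}_1$ and $\mathcal{L}_2$. If I want a self-contained argument instead of citing this result, I would use the open mapping theorem. The map $V_1\times V_2\to H$, $(v,w)\mapsto v+w$, is a continuous bijection between Banach spaces, so $|v|_{\mathcal{A}}+|w|_{\mathcal{A}} \le K|v+w|_{\mathcal{A}}$. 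If $c=1$, there would be unit vectors $v_k\in V_1$, $w_k\in V_2$ with $\langle v_k,w_k\rangle_{\mathcal{A}}\to 1$. Then $|v_k - w_k|_{\mathcal{A}}^2 = 2 - 2\langle v_k,w_k\rangle_{\mathcal{A}} \to 0$, while $2 = |v_k|_{\mathcal{A}} + |-w_k|_{\mathcal{A}} \le K|v_k - w_k|_{\mathcal{A}}$, which is a contradiction.

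I expect the main obstacle to be the justification that $c<1$. First, Lemma~\ref{lemma:V1+V2} has to be read as a topological direct sum, which requires checking that $V_1$ and $V_2$ are closed subspaces of $H$. This follows because each is the graph of a bounded linear map: for $V_1$, the map is $\mathcal{L}_1$ restricted to $\tilde H^{s_2}(\Omega_2)$, bounded by \eqref{eq:Lf1_bounded} and the Poincar\'e inequality \eqref{eq:Poincare}. Second, the open mapping argument has to be set up carefully. Once $c<1$ is in hand, I set $\kappa = c^{1/2}$ and obtain $|\mathtt{u}^i|_{\mathcal{A}} \le \kappa^{i-1}|\mathtt{u}^0|_{\mathcal{A}}$ for all $i\ge1$, which gives \eqref{eq:convergence_geometrical} after absorbing constants.
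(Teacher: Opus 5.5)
Your proposal is correct and follows the paper's proof. It uses the same reduction to the homogeneous iteration via Remark~\ref{rem:conv_schwartz_method}, the same decomposition $H = V_1 \oplus V_2$ (Lemma~\ref{lemma:V1+V2}), the same $\mathcal{A}$-orthogonality of $\mathcal{P}_1,\mathcal{P}_2$ (Lemma~\ref{lemma:orth_proj}), and the same final appeal to coercivity. The only difference is that the paper cites Lions' Theorem~I.1, after passing to $H = V_1^{\perp}\oplus V_2^{\perp}$ via Brezis, whereas you prove the needed bound directly: you show $|\mathcal{P}_2\mathcal{P}_1 v|_{\mathcal{A}} \le c\,|v|_{\mathcal{A}}$ and get $c<1$ from the open mapping theorem. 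You also check explicitly that $V_1$ and $V_2$ are closed, which the paper leaves implicit.
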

\begin{proof}
Given the equivalence presented in Remark \ref{rem:conv_schwartz_method}, we assume that $f_1 \equiv 0$ and $f_2 \equiv 0$ and analyze the convergence of the sequence $\{ \mathtt{u}^i \}_{i \geq 0}$ described in \eqref{eq:schwartz_method_0_0}. Let us note that within this setting, i.e., $f_1 \equiv 0$ and $f_2 \equiv 0$,  the unique solution of the coupled problem \eqref{eq:first_EL_eq}--\eqref{eq:second_EL_eq} is $u \equiv 0 + 0$. 

Given the previously derived results (Lemma \ref{lemma:V1+V2} and Lemma \ref{lemma:orth_proj}) we are able to apply \cite[Theorem I.1]{Lions} and obtain convergence. In fact, $\mathcal{P}_i$, for $i\in \{1,2\}$, is an orthogonal projection from $H$ onto $V_i$ with respect to the inner product $\langle \cdot, \cdot \rangle_{\mathcal{A}}$ (cf. Lemma \ref{lemma:orth_proj}) and $H = V_1^{\intercal} \oplus V_2^{\intercal}$ (cf. Lemma \ref{lemma:V1+V2} combined with \cite[Corollary 2.15]{MR2759829}). We can thus obtain that there exists $\kappa \in (0,1)$ such that
\[
 \mathtt{u}^{i} \rightarrow 0 \textrm{ in } (H, | \cdot |_{\mathcal{A}}) \textrm{ as } i \uparrow \infty,
 \qquad
 | \mathtt{u}^{i+1} |_{\mathcal{A}} \leq \kappa^{i} | \mathtt{u}^{0} |_{\mathcal{A}} \quad \forall i \geq 0.
\]
From this we can deduce that $ \mathtt{u}_1^{i} \rightarrow 0$ in $\tilde{H}^{s_1}(\Omega_1)$ and that $ \mathtt{u}_2^{i} \rightarrow 0$ in $\tilde{H}^{s_2}(\Omega_2)$ as $i \uparrow \infty$. Let us now use the coercivity property \eqref{eq:A_is_coercive} of the bilinear form $\mathcal{A}$ to obtain
\[
|\mathtt{u}_1^{i+1}|_{\tilde{H}^{s_1}(\Omega_1)}^{2} + |\mathtt{u}_2^{i+1}|_{\tilde{H}^{s_2}(\Omega_2)}^{2} \lesssim |\mathtt{u}^{i+1}|_{\mathcal{A}}^{2} \lesssim \kappa^{2i}
\quad
\forall i \geq 0.
\]
This directly implies that $|\mathtt{u}_1^i|_{\tilde{H}^{s_1}(\Omega_1)} \lesssim \kappa^{i}$ and $|\mathtt{u}_2^i|_{\tilde{H}^{s_2}(\Omega_2)} \lesssim \kappa^{i}$ for all $i \geq 0$.
\end{proof}

\subsection{The discrete alternating scheme}

%%%%%%%%%%%%%%%%%%%%%%%%%%%%%%%%%%%%%%%%%%%%%%%%%%%%%%%
%%%%%%%%%%%%%%%%%  AlGORITHM 2    %%%%%%%%%%%%%%%%%%%%%
%%%%%%%%%%%%%%%%%%%%%%%%%%%%%%%%%%%%%%%%%%%%%%%%%%%%%%%

In this section, we present the discrete alternating \textbf{Algorithm} \ref{Algorithm_disc}, which is an iterative method for solving the finite element discretization \eqref{eq:first_EL_eq_discrete}--\eqref{eq:second_EL_eq_discrete}.

\begin{algorithm}[ht]
\caption{\textbf{The discrete alternating scheme}}
\label{Algorithm_disc}
\textbf{Input:} $\Omega_1, \Omega_2 \subset \mathbb{R}^n$, $s_{1},s_{2} \in (0,1)$, $f_{1} \in L^{2}(\Omega_{1})$, $f_{2} \in L^{2}(\Omega_{2})$, $J \in L^{1}(\mathbb{R}^{n})$, $\mathscr{T}_{1,h},\mathscr{T}_{2,\mathfrak{h}}$, and $u_{2,\mathfrak{h}}^{0} \in \mathbb{V}_{2,\mathfrak{h}}$.
\\
\textbf{Step 0:} Define $u_{h,\mathfrak{h}}^0 = 0 + u_{2,\mathfrak{h}}^{0} \in \mathbb{H}_{h,\mathfrak{h}}$.
\\
\textbf{For} $i=1$ \textbf{until convergence do}
\begin{itemize}
\item[$\boldsymbol{1}$:] Find the solution $u_{1,h}^{2i-1} \in \mathbb{V}_{1,h}$ of \eqref{eq:first_EL_eq_discrete} with $u_{2,\mathfrak{h}}$ replaced by $u_{2,\mathfrak{h}}^{2i-2}$.
\item[$\boldsymbol{2}$:] Define $u_{2,\mathfrak{h}}^{2i-1} =  u_{2,\mathfrak{h}}^{2i-2} \in \mathbb{V}_{2,\mathfrak{h}}$ and $u_{h,\mathfrak{h}}^{2i-1} =  u_{1,h}^{2i-1}  + u_{2,\mathfrak{h}}^{2i-1} \in \mathbb{H}_{h,\mathfrak{h}}$.
\item[$\boldsymbol{3}$:] Find the solution $u_{2,\mathfrak{h}}^{2i} \in \mathbb{V}_{2,\mathfrak{h}}$ of \eqref{eq:second_EL_eq_discrete} with $u_{1,h}$ replaced by $u_{1,h}^{2i-1}$.
\item[$\boldsymbol{4}$:] Define $u_{1,h}^{2i} =  u_{1,h}^{2i-1} \in \mathbb{V}_{1,h}$ and $u_{h,\mathfrak{h}}^{2i} =  u_{1,h}^{2i}  + u_{2,\mathfrak{h}}^{2i} \in \mathbb{H}_{h,\mathfrak{h}}$.
\end{itemize}
\end{algorithm}

According to the arguments developed in section \ref{sec:continuous_alternating_scheme}, the following results emerge.

\begin{theorem}[convergence of \textbf{Algorithm} \ref{Algorithm_disc}]
Given $f_{1} \in L^{2}(\Omega_{1})$ and $f_{2} \in L^{2}(\Omega_{2})$, the sequence $\{ u_{h,\mathfrak{h}}^i \}_{i \geq 0}$ generated by the discrete alternating method described in \textbf{Algorithm} \ref{Algorithm_disc} converges to the unique solution of the discrete coupled system \eqref{eq:first_EL_eq_discrete}--\eqref{eq:second_EL_eq_discrete}:
\begin{align*}
u_{1,h}^{i} \rightarrow u_{1,h} \text{ in } \mathbb{V}_{1,h}, \qquad u_{2,\mathfrak{h}}^{i} \rightarrow u_{2,\mathfrak{h}} \text{ in } \mathbb{V}_{2,\mathfrak{h}}, \qquad i \uparrow \infty.
\end{align*}
Moreover, the method is geometrically convergent: there exists $\kappa \in (0,1)$ such that
\begin{equation*}
|u_{1,h} - u_{1,h}^{i}|_{\tilde{H}^{s_{1}}(\Omega_{1})}
+
|u_{2,\mathfrak{h}} - u_{2,\mathfrak{h}}^{i}|_{\tilde{H}^{s_{2}}(\Omega_{2})} \lesssim \kappa^{i} \qquad \forall i \in \mathbb{N}.
\end{equation*}
\end{theorem}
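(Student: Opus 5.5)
We mirror the continuous argument of Section~\ref{sec:continuous_alternating_scheme}, replacing $H$ by the finite-dimensional space $\mathbb{H}_{h,\mathfrak{h}} = \mathbb{V}_{1,h}\oplus\mathbb{V}_{2,\mathfrak{h}}$, equipped with the inner product $\langle\cdot,\cdot\rangle_{\mathcal{A}}$ restricted to it.

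First, we introduce discrete solution operators $\mathcal{L}_{f_1,h}:\mathbb{V}_{2,\mathfrak{h}}\to\mathbb{V}_{1,h}$ and $\mathcal{L}_{f_2,\mathfrak{h}}:\mathbb{V}_{1,h}\to\mathbb{V}_{2,\mathfrak{h}}$ through \eqref{eq:first_EL_eq_discrete} and \eqref{eq:second_EL_eq_discrete}, respectively. Well-posedness of $\mathcal{L}_{f_1,h}$ follows from Lax--Milgram on $\mathbb{V}_{1,h}\subset\tilde H^{s_1}(\Omega_1)$: the form
\[
(\mathfrak{u},v)\mapsto \mathcal{C}_1\int_{\Omega_2^c}\int_{\Omega_2^c}\frac{(\mathfrak{u}(x)-\mathfrak{u}(y))(v(x)-v(y))}{|x-y|^{n+2s_1}}\,\mathrm{d}y\,\mathrm{d}x+\int_{\Omega_1}\int_{\Omega_2}J(x-y)\,\mathfrak{u}(x)v(x)\,\mathrm{d}y\,\mathrm{d}x
\]
is the restriction of the form already shown to be coercive in \eqref{eq:Lf1_coercive_1}. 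The same holds for $\mathcal{L}_{f_2,\mathfrak{h}}$. Exactly as in Remark~\ref{rem:conv_schwartz_method}, the affine structure shows that the error $u_{h,\mathfrak{h}}-u^i_{h,\mathfrak{h}}$ obeys the homogeneous iteration driven by the linear operators $\mathcal{L}_{1,h}$ and $\mathcal{L}_{2,\mathfrak{h}}$ (the case $f_i\equiv0$). Here we use that $u_{h,\mathfrak{h}}$ is a fixed point, which is immediate from \eqref{eq:first_EL_eq_discrete}--\eqref{eq:second_EL_eq_discrete}.

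Next, we define the subspaces $V_{1,h}=\{\mathcal{L}_{1,h}(v_2)+v_2 : v_2\in\mathbb{V}_{2,\mathfrak{h}}\}$ and $V_{2,h}=\{w_1+\mathcal{L}_{2,\mathfrak{h}}(w_1) : w_1\in\mathbb{V}_{1,h}\}$, together with the maps $\mathcal{P}_{1,h}$ and $\mathcal{P}_{2,h}$ defined analogously to \eqref{eq:P_1}--\eqref{eq:P_2}. The proof of Lemma~\ref{lemma:orth_proj} carries over verbatim: testing the discrete equation for $\mathcal{L}_{1,h}(v_2)$ with $z_1=U_1\in\mathbb{V}_{1,h}$ gives $\mathcal{A}$-orthogonality. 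Hence $\mathcal{P}_{i,h}$ is the $\mathcal{A}$-orthogonal projection of $\mathbb{H}_{h,\mathfrak{h}}$ onto $V_{i,h}$.

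For the decomposition $\mathbb{H}_{h,\mathfrak{h}}=V_{1,h}\oplus V_{2,h}$ we argue as follows.
\begin{itemize}
\item Trivial intersection: an element of $V_{1,h}\cap V_{2,h}$ solves the discrete coupled problem with zero data, so it vanishes by the uniqueness of the discrete solution.
\item Sum: the Fredholm step of Lemma~\ref{lemma:V1+V2} simplifies. The map $I-\mathcal{L}_{2,\mathfrak{h}}\circ\mathcal{L}_{1,h}$ is a linear endomorphism of the finite-dimensional space $\mathbb{V}_{2,\mathfrak{h}}$, and its kernel is trivial by the same uniqueness argument, so it is bijective. Equivalently, $\dim V_{1,h}+\dim V_{2,h}=\dim\mathbb{V}_{2,\mathfrak{h}}+\dim\mathbb{V}_{1,h}$ together with the trivial intersection already gives the direct sum.
\end{itemize}

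Finally, the iterates satisfy $\mathtt{u}^{2i-1}_{h,\mathfrak{h}}=\mathcal{P}_{1,h}\mathtt{u}^{2i-2}_{h,\mathfrak{h}}$ and $\mathtt{u}^{2i}_{h,\mathfrak{h}}=\mathcal{P}_{2,h}\mathtt{u}^{2i-1}_{h,\mathfrak{h}}$. We then invoke \cite[Theorem I.1]{Lions}, which applies since the orthogonal complements satisfy $V_{1,h}^{\perp}\oplus V_{2,h}^{\perp}=\mathbb{H}_{h,\mathfrak{h}}$. This yields a contraction factor $\kappa\in(0,1)$ in $|\cdot|_{\mathcal{A}}$; concretely, $\kappa$ is the cosine of the angle between the complements, which is strictly less than $1$ in finite dimensions when they intersect trivially.

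Coercivity \eqref{eq:A_is_coercive} together with continuity from Lemma~\ref{lemma:prop_A} then converts the $|\cdot|_{\mathcal{A}}$ bound into the stated $\tilde H^{s_1}\times\tilde H^{s_2}$ bound. Convergence in $\mathbb{V}_{1,h}$ and $\mathbb{V}_{2,\mathfrak{h}}$ follows since all norms are equivalent there.

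The only delicate point is whether $\kappa$ can be chosen independently of $h$ and $\mathfrak{h}$. The statement does not require this, so we would simply allow $\kappa=\kappa(h,\mathfrak{h})$. If uniformity were desired, one would need a discrete strengthened Cauchy--Schwarz inequality between $V_{1,h}^\perp$ and $V_{2,h}^\perp$, obtained from uniform bounds on the discrete operators $\mathcal{L}_{1,h}$ and $\mathcal{L}_{2,\mathfrak{h}}$ analogous to \eqref{eq:Lf1_bounded}--\eqref{eq:Lf2_bounded}.
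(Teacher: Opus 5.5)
Your proposal is correct and takes the same route as the paper. The paper gives no separate proof; it only states that the continuous argument of Section~\ref{sec:continuous_alternating_scheme} carries over, namely solution operators, the subspaces $V_1,V_2$, the $\mathcal{A}$-orthogonal projections, the splitting $H=V_1\oplus V_2$, and \cite[Theorem I.1]{Lions}. You carry out exactly that transplantation on $\mathbb{H}_{h,\mathfrak{h}}$, with two finite-dimensional simplifications: a dimension count replaces the Fredholm step, and you note openly that $\kappa$ may depend on $h,\mathfrak{h}$, which the statement does not exclude.
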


%%%%%%%%%%%%%%%%%%%%%%%%%%%%%%%%%%%%%%%%%%%%%%%%%%%%
%%%%%%%%%%%%%%%%%%%%%%%%%%%%%%%%%%%%%%%%%%%%%%%%%%%%
%%%%%%%%%%%%%%%%%%%%%%%%%%%%%%%%%%%%%%%%%%%%%%%%%%%%
%%%%%%%%%%%%%%%%%%%%%%%%%%%%%%%%%%%%%%%%%%%%%%%%%%%%

\section{Numerical experiments}\label{sec:numerical_exp}

In this section, we present two numerical experiments in two dimensions that illustrate the performance of the devised finite element scheme. These experiments were conducted using a MATLAB code that adapts the solution technique from \cite{acosta2017short} for the integral fractional Laplacian to our framework, combined with the alternating method described in Algorithm~\ref{Algorithm_disc}. All integrals involved are approximated using a suitable Gaussian quadrature.

We now describe Algorithm~\ref{Algorithm_disc_fem}, which implements the discrete alternating scheme to approximate the solution of the discrete system \eqref{eq:first_EL_eq_discrete}--\eqref{eq:second_EL_eq_discrete}. Given two meshes, $\mathscr{T}_{1,h}$ for $\Omega_{1}$ and $\mathscr{T}_{2,\mathfrak{h}}$ for $\Omega_{2}$, the algorithm produces the approximation sequence $\{u_{h,\mathfrak{h}}^{k}\}_{k=0}^{N} \subset \mathbb{H}_{h,\mathfrak{h}}$, where $N$ depends on a prescribed tolerance criterion, and outputs
\[
u_{h,\mathfrak{h}}^{N} = u_{1,h}^{N} + u_{2,\mathfrak{h}}^{N} \in \mathbb{H}_{h,\mathfrak{h}}
\]
as the final approximation. The algorithm is initialized in \textbf{Step~0} with an initial guess  $u_{2,\mathfrak{h}}^0 \in \mathbb{V}_{2,\mathfrak{h}}$, setting
$u_{h,\mathfrak{h}}^0 = 0 + u_{2,\mathfrak{h}}^0 \in \mathbb{H}_{h,\mathfrak{h}}$,
where $0$ denotes the zero function on $\Omega_1$, $\mathtt{err} = 1$,
and $i = 1$. Each iteration $i \geq 1$ proceeds as follows. \textbf{Steps~1--4}
alternate between solving the discrete problem on $\Omega_1$ using the
current approximation on $\Omega_2$, and solving the discrete problem
on $\Omega_2$ using the updated approximation on $\Omega_1$, yielding
the new iterates $u_{h,\mathfrak{h}}^{2i-1}$ and $u_{h,\mathfrak{h}}^{2i}$.
\textbf{Step~5} then updates the error indicator
\[
\mathtt{err} = \|(u_{1,h}^{2i}, u_{2,\mathfrak{h}}^{2i}) -
(u_{1,h}^{2i-2}, u_{2,\mathfrak{h}}^{2i-2})\|_{2},
\]
where $\|\cdot\|_2$ denotes the Euclidean norm of the vector of nodal
values in both subdomains. The algorithm terminates when
$\mathtt{err} < 10^{-8}$.

\begin{algorithm}[ht]
\caption{\textbf{Implemented discrete alternating scheme}}
\label{Algorithm_disc_fem}
\textbf{Input:} $\mathscr{T}_{1,h},\mathscr{T}_{2,\mathfrak{h}}$, and $u_{2,\mathfrak{h}}^{0} \in \mathbb{V}_{2,\mathfrak{h}}$.
\\
\textbf{Step 0:} Define $u_{h,\mathfrak{h}}^0 = 0 + u_{2,\mathfrak{h}}^{0} \in \mathbb{H}_{h,\mathfrak{h}}$, where $0$ denotes the zero function on $\Omega_1$. Set $\mathtt{err} = 1$ and $i = 1$.
\\
\textbf{While} $\mathtt{err} \geq 10^{-8}$ \textbf{do}
\begin{itemize}
\item[$\boldsymbol{1}$:] Find the solution $u_{1,h}^{2i-1} \in \mathbb{V}_{1,h}$ of \eqref{eq:first_EL_eq_discrete} with $u_{2,\mathfrak{h}}$ replaced by $u_{2,\mathfrak{h}}^{2i-2}$.
\item[$\boldsymbol{2}$:] Define $u_{2,\mathfrak{h}}^{2i-1} =  u_{2,\mathfrak{h}}^{2i-2} \in \mathbb{V}_{2,\mathfrak{h}}$ and $u_{h,\mathfrak{h}}^{2i-1} =  u_{1,h}^{2i-1}  + u_{2,\mathfrak{h}}^{2i-1} \in \mathbb{H}_{h,\mathfrak{h}}$.
\item[$\boldsymbol{3}$:] Find the solution $u_{2,\mathfrak{h}}^{2i} \in \mathbb{V}_{2,\mathfrak{h}}$ of \eqref{eq:second_EL_eq_discrete} with $u_{1,h}$ replaced by $u_{1,h}^{2i-1}$.
\item[$\boldsymbol{4}$:] Define $u_{1,h}^{2i} =  u_{1,h}^{2i-1} \in \mathbb{V}_{1,h}$ and $u_{h,\mathfrak{h}}^{2i} =  u_{1,h}^{2i}  + u_{2,\mathfrak{h}}^{2i} \in \mathbb{H}_{h,\mathfrak{h}}$.
\item[$\boldsymbol{5}$:] Set $\mathtt{err} = \|(u_{1,h}^{2i},u_{2,\mathfrak{h}}^{2i}) - (u_{1,h}^{2i-2},u_{2,\mathfrak{h}}^{2i-2})\|_{2}$ and $i \leftarrow i+1$.
\end{itemize}
\end{algorithm}

We now describe the setting for our numerical experiments. We set $n = 2$, $\Omega_{1} = B_{1}(x_{1})$, and $\Omega_{2} = B_{1}(x_{2})$, where $x_{1} = -x_{2} = (1,1)$. Note that $d = \textrm{dist}(\Omega_1,\Omega_2) = 2 \sqrt{2} - 2 >0$. We choose the kernel
\[
 J = |B_{6}(0)|^{-1}\chi_{{B_{6}(0)}}.
\]
Note that $r = 6 > 2 \sqrt{2} - 2 =d$, so assumption \textup{(J3)}  is satisfied. The exact solution $\bar{u} = \bar{u}_1 +
\bar{u}_2$ is given by
\begin{align}\label{eq:exact_sol}
\begin{split}
& \bar{u}_{1}(x)= ( 2^{2s_{1}}\Gamma^{2}\left(1 + s_{1}\right))^{-1}(1 - |x - x_{1}|^{2})^{s_{1}}_{+}, 
\\
& \bar{u}_{2}(x)= ( 2^{2s_{2}}\Gamma^{2}\left(1 + s_{2}\right))^{-1}(1 - |x - x_{2}|^{2})^{s_{2}}_{+}, 
\end{split}
\end{align}
where $t_{+} = \max\{0,t\}$. Within this setting, following the approach of Theorem \ref{thm:Sob_reg}, the source terms \( f_1 \) and \( f_2 \) are given by
\begin{equation}
f_1 = 1 - \bar{u}_1\left( 2\mathcal{C}_{1} h_1 - g_1 \right) - \vartheta_1,
\qquad f_2 = 1 - \bar{u}_2\left( 2\mathcal{C}_{2} h_2 - g_2 \right) - \vartheta_2,
\label{eq:f_1_and_f_2}
\end{equation}
where 
\begin{align*}
& h_{1}(x) = \int_{\Omega_{2}} |x-y|^{-2 - 2s_{1}} \mathrm{d}y, \qquad h_{2}(x) = \int_{\Omega_{1}} |x-y|^{-2 - 2s_{2}} \mathrm{d}y, \\
& \vartheta_{1}(x) = \int_{\Omega_{2}}J(x - y) \bar{u}_{2}(y) \mathrm{d}y, \qquad \vartheta_{2}(x) = \int_{\Omega_{1}}J(x - y)\bar{u}_{1}(y) \mathrm{d}y,
\\
& g_{1}(x) = \int_{\Omega_{2}} J(x-y) \mathrm{d}y,
\qquad g_{2}(x) = \int_{\Omega_{1}} J(x-y) \mathrm{d}y.
\end{align*}
In \eqref{eq:f_1_and_f_2}, the constant $1$ follows from the fact that $(-\Delta)^{s_i} \bar{u}_i = 1$ in $\Omega_i$; see \cite[Theorem 3]{MR3640641} and \cite[Theorem 6.1]{acosta2017short}.

Finally, we note that in all experiments below, the alternating method converges in $4$ or $5$ iterations.

\subsection{Example 1: Convergence behavior for different fractional parameters}

We consider various combinations of $s_{1}$ and $s_{2}$, specifically
\[
(s_{1}, s_{2}) \in \{(0.2,0.2),(0.2,0.4),(0.2,0.6),(0.4,0.4),(0.4,0.6),(0.6,0.6)\}.
\]
The mesh sizes are set equal, $h = \mathfrak{h}$. This experiment illustrates the performance of the devised finite element method and provides experimental validation of the error estimate in Theorem  \ref{thm:apriori_error}, measured in the norm $| \cdot |_{H}$ defined in \eqref{eq:originalNorm}.

In Figure \ref{fig:ex_1}, we present the experimental convergence rates for $|\bar{u} - \bar{u}_{h,\mathfrak{h}}|_{H}$.
We observe that, for each combination of $s_{1}$ and $s_{2}$ considered, the experimental rate of convergence behaves as $O(h^{1/2})$. This agrees with the error bound in Theorem~\ref{thm:apriori_error} when $s_1,s_2 \geq 1/2$. However, when $s_{1} < 0.5$ or $s_{2} < 0.5$, the experimental rates of convergence are higher than those predicted
by Theorem \ref{thm:apriori_error}, but are consistent with the regularity result:
\begin{equation}\label{eq:higher_reg}
\bar{u}_i  \in H^{s_{i} + \frac{1}{2} - \epsilon_{i}}(\Omega_{i}), \qquad 0 < \epsilon_{i} < s_{i} + \tfrac{1}{2}, \qquad i \in \{1,2\}.
\end{equation}
The error bounds derived in Theorem \ref{thm:apriori_error} are based on the regularity estimates of Theorem \ref{thm:Sob_reg}, which follow from \cite[Theorem 2.1]{MR4283703}. For $s_{i} \in (0,0.5]$, this gives $\bar{u}_{i} \in H^{2s_{i} - \epsilon_{i}}(\Omega_{i})$ for all $\epsilon_{i} \in (0,s_{i})$, which is weaker than \eqref{eq:higher_reg}. As noted in \cite[page 1921]{MR4283703}, higher regularity is expected when the
forcing term belongs to $H^{r}(\Omega_{i})$ for some $r > 0$, but this cannot be derived from \cite[Theorem 2.1]{MR4283703}.
We emphasize that these regularity estimates hold under the sole assumption that $\partial \Omega_{i}$ is Lipschitz. Finally, we note that $\bar{u}_{1}$ and $\bar{u}_{2}$ defined in \eqref{eq:exact_sol} satisfy \eqref{eq:higher_reg}.

\begin{figure}[ht!]
\centering
\hspace{1.0cm}\includegraphics[trim={0 0 0 0},clip,width=9.00cm,height=6.0cm,scale=0.41]{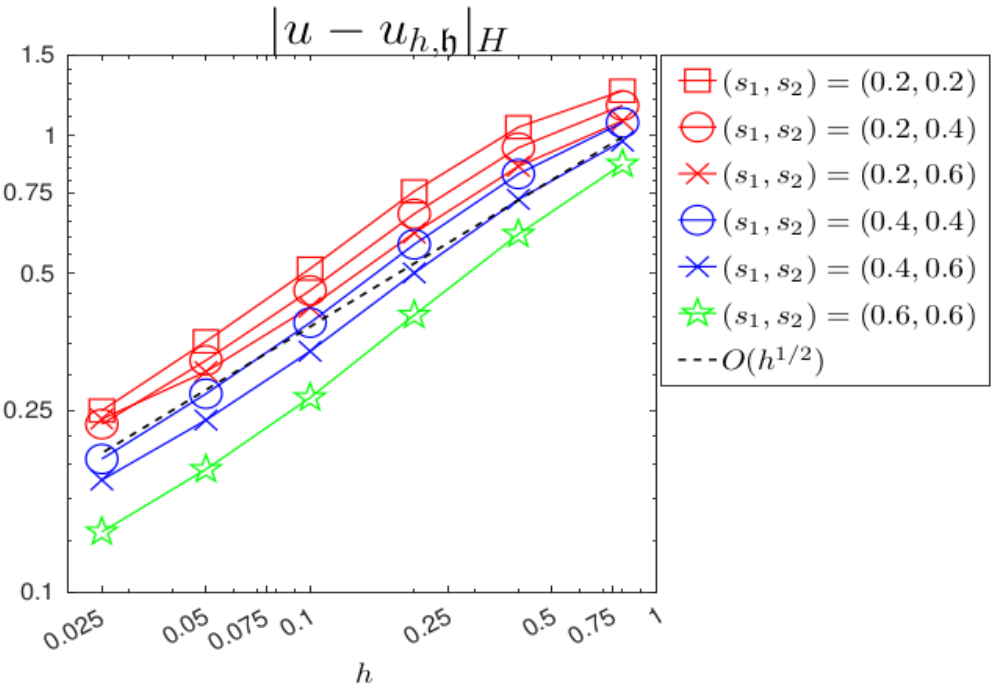}
\caption{Example 1: Experimental rates of convergence of $|\bar{u} - u_{h,\mathfrak{h}}|_{H}$ for various combinations of $s_1$
and $s_2$: $(s_{1}, s_{2}) \in \{(0.2,0.2),(0.2,0.4),(0.2,0.6),(0.4,0.4),(0.4,0.6),(0.6,0.6)\}.$}
\label{fig:ex_1}
\end{figure}

\subsection{Example 2: Influence of refining only the mesh 
$\mathscr{T}_{1,h}$}

In this example, we fix the mesh size $\mathfrak{h}$ associated with the mesh $\mathscr{T}_{2,\mathfrak{h}}$. We consider
$$(s_{1}, s_{2}) \in \{(0.2,0.2),(0.4,0.2),(0.6,0.2)\}.$$
The purpose of this experiment is to investigate the influence of refining only the mesh $\mathscr{T}_{1,h}$ on the errors $|u_{1} - u_{1,h}|_{\tilde{H}^{s_{1}}(\Omega_{1})}$ and $|u_{2} - u_{2,\mathfrak{h}}|_{\tilde{H}^{s_{2}}(\Omega_{2})}$.

In Figure~\ref{fig:ex_2}, we present the experimental rates of convergence for
\[
 |u_{1} - u_{1,h}|_{\tilde{H}^{s_{1}}(\Omega_{1})},
 \qquad
 |u_{2} - u_{2,\mathfrak{h}}|_{\tilde{H}^{s_{2}}(\Omega_{2})}.
\]
For all considered values of $(s_{1},s_{2})$, the error $|u_{1} - u_{1,h}|_{\tilde{H}^{s_{1}}(\Omega_{1})}$ initially exhibits an optimal experimental rate of convergence, which persists up to the fourth refinement step. After this point, the convergence rate deteriorates, as
expected: the fixed mesh $\mathscr{T}_{2,\mathfrak{h}}$
causes the error $|u_2 - u_{2,\mathfrak{h}}|_{\tilde{H}^{s_2}
(\Omega_2)}$ to stagnate, which in turn prevents further
decay of $|u_1 - u_{1,h}|_{\tilde{H}^{s_1}(\Omega_1)}$.

\begin{figure}[ht!]
\centering
\includegraphics[trim={0 0 0 0},clip,width=13.0cm,height=3.6cm,scale=0.37]{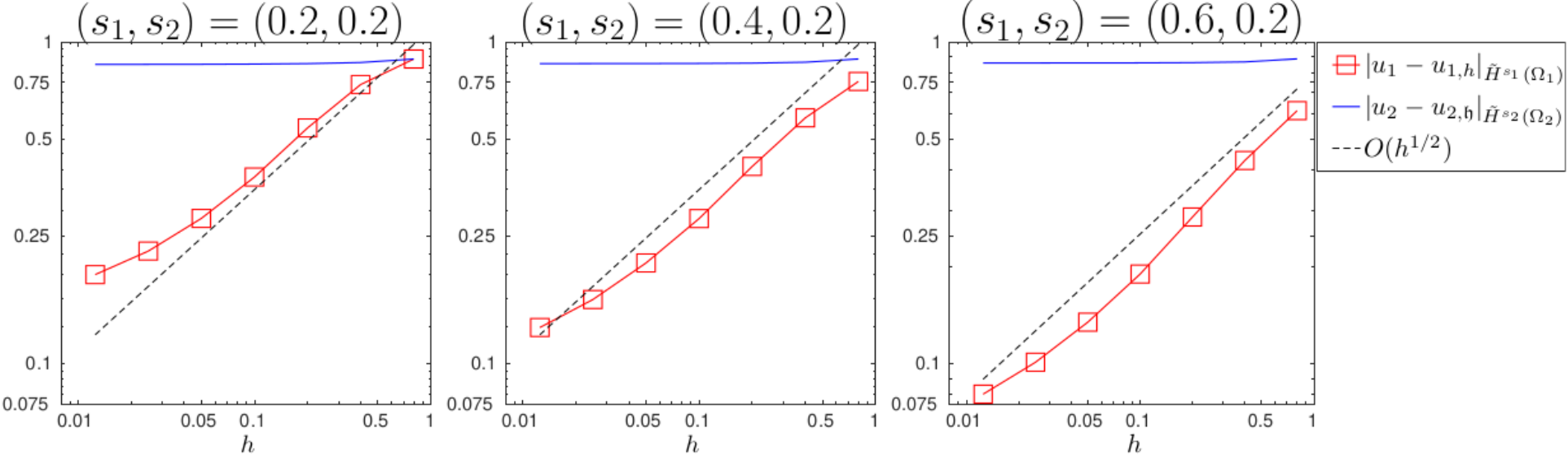}
\caption{Example 2: Experimental rates of convergence of $|u_{1} - u_{1,h}|_{\tilde{H}^{s_{1}}(\Omega_{1})}$ and $|u_{2} - u_{2,\mathfrak{h}}|_{\tilde{H}^{s_{2}}(\Omega_{2})}$ for various combinations of $s_1$ and $s_2$: $(s_{1}, s_{2}) \in \{(0.2,0.2),(0.4,0.2),(0.6,0.2)\}.$}
\label{fig:ex_2}
\end{figure}

\section{Conclusions}\label{sec:conclusions}

In this work, we introduced and analyzed a nonlocal coupled system involving two regional fractional Laplacians, each acting on a separate domain and coupled through a nonlocal interaction kernel. Well-posedness of the continuous problem was established via the direct method of the calculus of variations. We then proposed and analyzed a finite element discretization based on piecewise linear elements, for which we obtained a priori error estimates grounded in regularity theory for the fractional Laplacian on Lipschitz domains. The resulting convergence rates are in agreement with the observed experimental behavior.

To solve the discrete coupled system efficiently, we proposed and analyzed a discrete alternating scheme inspired by the classical Schwarz method. As an instrumental step, we first established geometric convergence of a continuous counterpart; the same arguments then yield geometric convergence of the discrete scheme to the unique solution of the discrete coupled system. The analysis rests on the orthogonal decomposition $H = V_1 \oplus V_2$ and the theory of alternating projections.

The numerical experiments confirm the theoretical predictions and reveal that, when $s_i < 1/2$, the experimental rates of convergence exceed those predicted by the theory. This phenomenon is consistent with higher regularity of the exact solution, which cannot currently be derived from available regularity theory for the fractional Laplacian on Lipschitz domains.

Natural directions for future work include the study of nonlocal coupled systems on non-disjoint domains, the analysis of more general coupling kernels,  the extension to nonlinear problems, and the formulation and analysis of optimal control problems governed by \eqref{def:state_eq}.

%---------------------------------
% BIBLIOGRAFIA
%---------------------------------

%\bibliographystyle{plain}

\bibliographystyle{siamplain}
\bibliography{nonlocal_coupled_ref}

\end{document}